\documentclass[12pt,a4paper]{amsart}

\usepackage{amsmath, nicefrac, amsthm, verbatim, amsfonts, amssymb, xcolor, bbm,mathtools}
\usepackage{graphics, xspace, enumerate}
\usepackage[top=25mm,bottom=25mm,left=25mm,right=25mm]{geometry}	


\usepackage{dsfont}
\usepackage[bb=boondox]{mathalfa}

\usepackage{graphicx, amsmath}
\usepackage[colorlinks=true,
citecolor=green,urlcolor=green,linkcolor=blue,
bookmarksopen=true,unicode=true,pdffitwindow=true]{hyperref}
\usepackage[english]{babel}
\RequirePackage[mathscr]{eucal}
\usepackage{seqsplit,cleveref}
\usepackage{todonotes}
\usepackage{xcolor}

\hypersetup{pdfauthor={}}
\hypersetup{pdftitle={The growth  of the range of stable random walks}}

\hyphenation{Austau-schdienst}

\theoremstyle{plain}
\newtheorem{theorem}{Theorem}[section]
\newtheorem{corollary}[theorem]{Corollary}
\newtheorem{lemma}[theorem]{Lemma}

\theoremstyle{definition}

\newcommand {\Prob} {\ensuremath{\mathbb{P}}}
\newcommand {\R} {\ensuremath{\mathbb{R}}}

\newcommand {\N} {\ensuremath{\mathbb{N}}}

\newcommand{\D}{\mathrm{d}}
\newcommand {\X} {\ensuremath{\mathrm{X}}}

\newcommand{\Capa}{\operatorname{Cap}}
\newcommand{\Var}{\operatorname{Var}}
\newcommand{\E}{\mathrm{e}}

\newcommand{\bbN}{\mathbb{N}}
\newcommand{\bbZ}{\mathbb{Z}}
\newcommand{\bbR}{\mathbb{R}}

\newcommand{\bbP}{\mathbb{P}}
\newcommand{\bbE}{\mathbb{E}}
\newcommand{\bbjedan}{\mathbbm{1}}
\newcommand{\sB}{{\mathscr{B}}}

\newcommand{\Ind}{\mathds{1}}

\newcommand{\calH}{\mathcal{H}}

\newcommand{\calI}{\mathcal{I}}

\newcommand{\calF}{\mathcal{F}}

\newcommand{\calD}{\mathcal{D}}

\newcommand{\calR}{\mathcal{R}}
\newcommand{\calN}{\mathcal{N}}
\newcommand{\calS}{\mathcal{S}}
\newcommand{\calV}{\mathcal{V}}
\newcommand{\calG}{\mathcal{G}}

\newcommand{\aps}[1]{\vert #1 \vert}
\newcommand{\APS}[1]{\left\vert #1 \right\vert}
\newcommand{\norm}[1]{\lVert #1 \rVert}
\newcommand{\floor}[1]{\lfloor #1 \rfloor}

\newcommand{\OBL}[1]{\left( #1 \right)}

\newcommand{\UGL}[1]{\left[ #1 \right]}

\newcommand{\VIT}[1]{\left\{ #1 \right\}}

\numberwithin{equation}{section}


\title[Limit theorems for a stable sausage]{Limit theorems for a stable sausage} 

\author[W.\ Cygan]{Wojciech Cygan}
\address[Wojciech Cygan]{Institut f\"{u}r Mathematische Stochastik\\Technische Universit\"{a}t Dresden\\Dresden\\Germany
\& 
Instytut Matematyczny\\Uniwersytet Wroc\l{}awski\\ Wroc\l{}aw\\ Poland}
\email{wojciech.cygan@uwr.edu.pl}

\author[N.\ Sandri\'{c}]{Nikola Sandri\'{c}}
\address[Nikola\ Sandri\'{c}]{
	Department of Mathematics\\University of Zagreb\\ Zagreb\\Croatia
}
\email{nsandric@math.hr}

\author[S.\ \v{S}ebek]{Stjepan\ \v{S}ebek}
\address[Stjepan\ \v{S}ebek]{
	Institute of Discrete Mathematics\\
	Graz University of Technology\\
	Graz\\ 
	Austria
	\&	
		Department of Applied Mathematics\\
	Faculty of Electrical Engineering and Computing\\
	University of Zagreb\\ 
 Zagreb\\ 
	Croatia}
\email{stjepan.sebek@fer.hr}

\subjclass[2010]
{60F05, 
60G52, 
60F17} 
\keywords{functional central limit theorem,  law of the iterated logarithm, stable process, stable sausage}

\begin{document}
\allowdisplaybreaks[4]

\begin{abstract}
In this article, we study fluctuations of the volume of a stable
sausage defined via a $d$-dimensional rotationally invariant
$\alpha$-stable process.
As the main results, we establish a functional central limit theorem (in the case when $d/\alpha>3 /2$)
with a standard one-dimensional Brownian motion in the limit, and  Khintchine's  and Chung's
laws of the iterated logarithm (in the case when $d/\alpha>9 /5$).

\end{abstract}
\maketitle

\section{Introduction}
 
Let $\X = \{X_t\}_{t \ge 0}$ be a L\'evy process in $\bbR^d$ defined on a probability space $(\Omega, \calF, \bbP)$.  A L\'{e}vy   sausage  associated with the process $\X$ and a given compact set $K\subset\bbR^d$, on the time interval $[s, t]$, $0 \le s \le t $,  is the random set  defined as
\begin{equation*}
	\calS^{K}[s, t] \,=\, \bigcup_{s \le u \le t} \{X_u + K\} .
\end{equation*}
If $s=0$ we use the notation $\calS^K_t = \calS^K[0, t]$.
 Let $\lambda(\D x)$ be the Lebesgue measure on $\bbR^d$ and let us denote by
\begin{equation*}
	\calV^K[s, t] \,=\, \lambda(\calS^K[s, t]) 
\end{equation*}
the volume of the L\'evy sausage $\calS^K[s, t]$ (we write $\calV^K_t= \lambda(\calS^K_t)$). Already Spitzer \cite{Spitzer} linked $\calV^K_t$ with the first hitting time $\tau_K = \inf \{s \ge 0 : X_s \in K\}$ via the identity
\begin{equation}\label{eq:EXP}
	\bbE[\calV_t^K] \,=\, \int_{\bbR^d} \bbP_x(\tau_{K} \le t)\,\D x,\qquad t\ge0,
\end{equation}
where $\bbP_x$ is the probability measure related to the process $\X$ started at $x\in \bbR^d$.
Port and Stone \cite[Theorem 11.1]{Port_Stone_Fourier} proved that if $\X$ is transient then 
 \begin{equation}\label{eq:SLLN}
\lim_{t \nearrow \infty}\frac{\bbE[\calV_t^K]}{t}\,=\,\Capa(K),
\end{equation}
where $\Capa(K)$  is the capacity of $K$ associated with the process $\X$.  Hawkes  \cite{Hawkes} observed that in view of the subadditivity of the process $\{\calV^K_t\}_{t\ge0}$, that is,
\begin{align*}
\calV^K_{s+t}\,\le\, \calV^K_s+\calV^K[s,s+t],\qquad s,t\ge 0,
\end{align*}
\cref{eq:SLLN} combined with Kingman's ergodic theorem (cf.\ \cite[Theorem Ch.\ I, 5.6]{Krengel}) and \cite[Proposition 3.12]{Kallenberg} implies the following strong law of large numbers
\begin{equation}\label{eq:SLLN2}\lim_{t \nearrow \infty}\frac{\calV^K_t}{t}\,=\,\Capa(K)\qquad \Prob\text{-a.s.}
\end{equation}

More satisfactory limit theorems for the volume of a L\'{e}vy sausage are known if $\X$ is a standard Brownian motion. In this case $\calS^K_t$ is called a Wiener sausage, and there is a vast amount of literature concerning its asymptotic behavior. The pioneering work \cite{Donsker_Varadhan} was due to Donsker and Varadhan were they established a large deviation principle for the volume of a Wiener sausage. Their result was extended  by Eisele and Lang \cite{Eisele} to the case when the driving process is a standard Brownian motion with drift, and to a class of elliptic diffusions by Sznitman \cite{Sznitman}, while \^{O}kura investigated similar questions for a certain class of symmetric L\'{e}vy processes.
Le Gall \cite{Le-Gall} obtained a central limit theorem for the volume of a Wiener sausage in dimensions $d\geq 2$, with different normalizing sequences and distributions in the limit for $d=2$, $d=3$ and $d\geq 4$, respectively. More recently, van den Berg, Bolthausen and den Hollander \cite{Berg_Bolthausen_Hollander} studied the problem of intersections of two Wiener sausages, see also \cite{van_den_Berg}, \cite{van_den_Berg_3} and \cite{van_den_Berg_2}. For further limit theorems for the volume of a Wiener sausage see \cite{Csaki}, \cite{Peres} and \cite{Wang-Gao}. 
We  remark that first studies on a Wiener sausage were motivated by its applications in physics \cite{Kac}.
We refer the reader to the book by Simon \cite{Simon} for a comprehensive discussion on this topic.

In the present article, we focus on the limit behavior of the volume of a stable sausage, that is, a L\'evy sausage corresponding to a 
stable L\'{e}vy process. Asymptotic behavior of stable sausages has not been extensively studied yet. 
In the seminal paper \cite{Donsker_Varadhan} Donsker and Varadhan obtained a large deviation principle for the volume of a stable sausage.
Some other works were concerned with the expansion of the expected volume of a stable sausage. More precisely, 
Getoor \cite{Getoor} proved \cref{eq:SLLN} for rotationally invariant $\alpha$-stable processes with $d>\alpha$ and for any compact set $K$. He also investigated the first order asymptotics of the difference $\bbE[\calV^K_t] - t\Capa(K)$, whose form depends on the value of the ratio $d/\alpha$, see \cite[Theorem 2]{Getoor}. The second order terms in this expansion were found by Port  \cite{Port} for all strictly stable processes satisfying some extra assumptions. 
In \cite{Rosen_sausage_plane} Rosen established asymptotic expansions for the volume of a  stable sausage in the plane with the coefficients represented by $n$-fold self-intersections of the stable process.
In this article, we obtain a central limit theorem  for the  volume of a stable sausage. We then apply this result  to study convergence of the volume process in the Skorohod space, and establish the corresponding functional central limit theorem. 
Finally, we also obtain Khintchine's  and Chung's laws of the iterated logarithm for this process.

Before we formulate our results, we briefly recall some basic notation from the potential theory of stable processes. Let $\X$ be a 
rotationally invariant stable L\'{e}vy process of index $\alpha \in (0,2]$, that is, a L\'{e}vy process whose bounded continuous transition density $p(t, x)$ is uniquely determined by the Fourier transform
\begin{align*}
\E^{-t|\xi|^\alpha} \, =\, \int_{\bbR^d}\E^{i(x,\xi)}\,p(t,x)\, \D x,
\end{align*}
where $(x,\xi)$ stands for the inner product in $\bbR^d$, $|x|=(x,x)^{1/2}$ is the Euclidean norm, and $\D x =\lambda (\D x)$.
We assume that $\X$ is transient, which holds if (and only if) $d>\alpha$. Its Green function is then given by  $G(x)= \int_0^{\infty} p(t, x)\,\D t$. Let $\mathfrak{B}(\bbR^d)$ denote the family of all Borel subsets of $\bbR^d$. 
For each $B\in\mathfrak{B}(\bbR^d)$ there exists a unique Borel measure $\mu_B(\D x)$ supported on $B\in\mathfrak{B}(\bbR^d)$ such that
	\begin{equation}\label{eq:G*mu=P(T<infty)}
		\bbP_x(\tau_B < \infty) \,=\, \int_{\bbR^d} G(x- y) \,\mu_B(\D y).
	\end{equation}
The measure $\mu_B(\D x)$ is called the equilibrium measure of $B$, and its capacity $\Capa(B)$ is defined as the total mass of $\mu_B(\D x)$, that is, $\Capa(B) = \mu_B (B)$. We denote by $\sB(x,r)$ the closed Euclidean ball centered at $x\in\R^d$  of radius $r>0$. In the case when $r=1$ and $x=0$, we write $\sB=\sB(0,1)$. If $B =\sB(0,r)$ then the measure $\mu_B (\D y)$ has a density which is proportional to $(r-|y|^2)^{-\alpha /2}$. 
In particular, we have (see for instance \cite{Takeuchi})
\begin{align*}
\Capa (\sB) = \frac{\Gamma (d/2)}{\Gamma (\alpha /2)\Gamma (1+(d-\alpha)/2)}.
\end{align*}
In the case when $K=\sB$, we simply write $\calV_t$ instead of $\calV^{\sB}_t$ (and similarly $\calS_t$ for $\calS^\sB_t$). Let $\calN(0, 1)$ denote the Gaussian random variable with mean zero and variance one.
Our central limit theorem (see \Cref{tm:CLT}) for the volume of a stable sausage asserts that if $d/\alpha> 3/2$ then
	there exists a constant $\sigma =\sigma(d,\alpha)> 0$  such that
	\begin{equation}\label{Result:CLT}
		\frac{\calV_t - t\Capa(\sB)}{\sigma\sqrt{t}}\, \xrightarrow[t\nearrow\infty]{({\rm d})} \,\calN(0, 1),
	\end{equation}
where convergence holds in distribution. 
The cornerstone of the proof of \cref{Result:CLT} is to represent  $\calV_t$ as a sum of independent random variables plus an error term. For this we use inclusion-exclusion formula together with the Markov property and rotational invariance of the process $\X$. More precisely, for $t, s \ge 0$, we have 
\begin{equation}
\begin{aligned}\label{eq:VOL}
	\calV_{t + s}
	& \,=\, \lambda\bigl(\calS_t \cup \calS[t, t + s]\bigr) \,=\, \lambda\bigl((\calS_t - X_t) \cup (\calS[t, t + s] - X_t)\bigr) \\
	& \,=\, \calV_t^{(1)} + \calV_s^{(2)} - \lambda\bigl(\calS_t^{(1)} \cap \calS_s^{(2)}\bigr),
\end{aligned}
\end{equation}
where $\calV_t^{(1)}$ and $\calV_s^{(2)}$ ($\calS_t^{(1)}$ and $\calS_s^{(2)}$) are independent and have the same law as $\calV_t$ and $\calV_s$ ($\calS_t$ and $\calS_s$), respectively.
This decomposition allows us to apply the Lindeberg-Feller central limit theorem in the present context. 
The first key step is to find estimates for the error term $\lambda\bigl(\calS_t^{(1)} \cap \calS_s^{(2)}\bigr)$, which we give in  \Cref{Subsec:error}.
The second step is to control the variance of the volume of a stable sausage which is achieved in  \Cref{Subsec:variance}.

Let us emphasize that the present article has been mainly inspired by Le Gall's work \cite{Le-Gall}  where he studied fluctuations of the volume of a Wiener sausage (the case  $\alpha =2$).  
Among other results, he established the central limit theorem in \cref{Result:CLT} for dimensions $d\geq 4$. Still another source of motivation was the article \cite{LeGall-Rosen} by Le Gall and Rosen where they proved a corresponding central limit theorem for the range of stable random walks and mentioned that it is plausible that similar result holds for stable sausages, see \cite[Page 654]{LeGall-Rosen}. Both of these articles were also concerned with the lower-dimensional case $d<4$ and $d/\alpha \leq 3/2$, respectively. In the present article we are only interested in the case when $d/\alpha >3 /2$, and we postpone the study of the remaining values of the ratio $d/\alpha$ to follow-up articles. 

As an application of  \cref{Result:CLT} we obtain a functional central limit theorem (see \Cref{tm:FCLT}) which states that under the same  assumptions, and with the same constant $\sigma >0$, 
	\begin{equation}\label{Result:FCLT}
		\VIT{\frac{\calV_{nt} - nt\Capa(\sB)}{\sigma \sqrt{n}}}_{t \ge 0}\, \xrightarrow[n\nearrow\infty]{({\rm J}_1)}\, \{W_t\}_{t \ge 0}.
	\end{equation}
Here, convergence holds in the Skorohod space $\calD([0, \infty), \bbR)$ endowed with the ${\rm J}_1$ topology, and $\{W_t\}_{t \ge 0}$ is a standard Brownian motion in $\bbR$. The proof of \cref{Result:FCLT} is performed according to a general two-step scheme: (i) convergence of finite-dimensional distributions, which follows from \cref{Result:CLT}; (ii) tightness, which we investigate  by employing the well-known Aldous criterion, see  \Cref{Sec:FCLT} for details.

It is remarkable that results in \cref{Result:CLT,Result:FCLT} correspond  to
analogous results for the range (and its capacity) of stable
random walks on the integer lattice $\bbZ^d$ which we discussed in
\cite{CSS19} and \cite{CSS19_2}, respectively.

We finally use \cref{Result:CLT} to study growth of the paths of the volume of the stable sausage. In the case when $d/\alpha>9/5$, we prove   Khintchine's  law of the iterated logarithm
	\begin{align}\label{Result:LIL(Khi)}
		\liminf_{t \nearrow \infty} \frac{\calV_t - t\Capa(\sB)}{\sqrt{2\sigma^2 t \log \log t}} \,=\, -1\quad \mathrm{and}\quad
		\limsup_{t \nearrow \infty}\frac{\calV_t - t\Capa(\sB)}{\sqrt{2\sigma^2 t \log \log t}} \,=\, 1\qquad \Prob\text{-a.s.},
	\end{align} as well as
 Chung's law of the iterated logarithm
	\begin{align}\label{Result:LIL(Chung)}
		\liminf_{t \nearrow \infty} \frac{\sup_{0 \le s \le t} \aps{\calV_s - s\Capa(\sB)}}{\sqrt{\sigma^2 t/\log \log t}} \,=\, \frac{\pi}{\sqrt{8}}\qquad \Prob\text{-a.s.}
	\end{align} 
Our proof is based on the approach which was developed by Jain and Pruitt \cite{Jain-Pruitt} (see also \cite{Bass-Kumagai}) in the context of the range of random walks and then successfully applied in \cite{Wang-Gao} to obtain Khintchine's  and Chung's laws of the iterated logarithm for a Wiener sausage in dimensions $d\geq 4$.

The rest of the paper is organized as follows. In  \Cref{Sec:CLT} we prove the central limit theorem in \cref{Result:CLT}. For this we first deal with the error terms derived from \cref{eq:VOL}, and in the second part we show that the variance of the volume of a stable sausage behaves linearly at infinity.  \Cref{Sec:FCLT} is devoted to the proof of \cref{Result:FCLT}, and in  \Cref{Sec:asIP} we prove \cref{Result:LIL(Khi),Result:LIL(Chung)}. In \Cref{Sec:Technical} we present the proofs of some technical results which we need in the course of the study.

\section{Central limit theorem}\label{Sec:CLT}
The goal of this section is to prove the following central limit theorem. We assume  that $\X$ is a  rotationally invariant stable
L\'{e}vy process in $\bbR^d$ of index $\alpha \in (0,2]$ satisfying $d/\alpha> 3/2$. 
\begin{theorem}\label{tm:CLT} 
Under the above assumptions, 
	there exists a constant $\sigma = \sigma(d,\alpha)> 0$  such that
	\begin{equation*}
		\frac{\calV_t - t\Capa(\sB)}{\sigma\sqrt{t}}\, \xrightarrow[t\nearrow\infty]{({\rm d})} \,\calN(0, 1).
	\end{equation*}
\end{theorem}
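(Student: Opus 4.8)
The plan is to realise $\calV_n$, for integer $n$, as a sum of i.i.d.\ contributions of long sausage blocks up to an error of order $o_{\mathbb{P}}(\sqrt n)$, and then to invoke the Lindeberg--Feller theorem; the passage to real $t$ is a routine interpolation. The first reduction is to integer times: since $t\mapsto\calV_t$ is nondecreasing and $0\le\calV_t-\calV_{\lfloor t\rfloor}\le\lambda(\calS[\lfloor t\rfloor,t])$ is stochastically dominated by $\calV_1$, the latter increment is $O_{L^1}(1)$, and together with $\sqrt{\lfloor t\rfloor}/\sqrt t\to1$ this shows it suffices to treat $n\to\infty$ along $\bbN$. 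The second reduction replaces $n\,\Capa(\sB)$ by $\bbE[\calV_n]$: by Getoor's expansion \cite[Theorem~2]{Getoor}, the difference $\bbE[\calV_n]-n\,\Capa(\sB)$ is $O(n^{2-d/\alpha})$, $O(\log n)$, or $O(1)$ according as $d/\alpha$ lies in $(1,2)$, equals $2$, or exceeds $2$, hence $o(\sqrt n)$ exactly because $d/\alpha>3/2$ (at $d/\alpha=3/2$ this correction is genuinely of order $\sqrt n$). Thus it remains to show $(\calV_n-\bbE[\calV_n])/(\sigma\sqrt n)\xrightarrow[n\to\infty]{(\mathrm{d})}\calN(0,1)$.

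For the block decomposition, fix $m=m_n=n^{\theta}$ with $\theta=\theta(d,\alpha)\in(0,1)$ taken close enough to $1$ (the constraint $\theta<1$, which makes $m_n=o(n)$ compatible with the error estimates below, is precisely equivalent to $d/\alpha>3/2$), set $q=\lfloor n/m\rfloor$, and let $\calV_m^{(i)}:=\lambda\bigl(\calS[(i-1)m,im]-X_{(i-1)m}\bigr)$ for $1\le i\le q$; by stationarity and independence of the increments of $\X$ these are i.i.d.\ with law $\calV_m$. One has the exact identity
\[
\calV_n=\sum_{i=1}^{q}\calV_m^{(i)}-\calO_n+\calR_n,
\]
where $\calO_n:=\sum_{i=1}^{q}\lambda\bigl(\calS[(i-1)m,im]\bigr)-\calV_{qm}\ge0$ and $\calR_n:=\calV_n-\calV_{qm}\ge0$. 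By the Bonferroni inequality $\calO_n\le\sum_{i<j}\lambda\bigl(\calS[(i-1)m,im]\cap\calS[(j-1)m,jm]\bigr)$, and a time-reversal of the $i$-th block followed by Cauchy--Schwarz bounds the mean of each summand by $\beta(m):=\bbE\bigl[\lambda(\calS_m^{(1)}\cap\calS_m^{(2)})\bigr]$, the expected overlap of two independent copies of $\calS_m$ started at the same point; hence $\bbE[\calO_n]\le\binom{q}{2}\beta(m)$, which by the estimate $\beta(m)=O\bigl(m^{2-d/\alpha}\vee\log m\bigr)$ from \Cref{Subsec:error} is $o(\sqrt n)$ for our choice of $m_n$ --- this is the step that pins the threshold to $d/\alpha>3/2$. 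Since $\calO_n\ge0$ we also get $\calO_n=o_{\mathbb{P}}(\sqrt n)$. For the remainder, $\calR_n=\lambda(\calS[qm,n])-\lambda(\calS[qm,n]\cap\calS_{qm})$: the first term has the law of $\calV_{n-qm}$, whose variance is $O(n-qm)=O(m_n)=o(n)$ by \Cref{Subsec:variance}, and the second is nonnegative with mean $O\bigl((n-qm)^{2-d/\alpha}\vee\log n\bigr)=o(\sqrt n)$ by \Cref{Subsec:error} (again using $2-d/\alpha<\tfrac12$), so $\calR_n-\bbE[\calR_n]=o_{\mathbb{P}}(\sqrt n)$. Taking expectations in the displayed identity gives $\bbE[\calV_n]=q\,\bbE[\calV_m]-\bbE[\calO_n]+\bbE[\calR_n]$, and therefore
\[
\frac{\calV_n-\bbE[\calV_n]}{\sqrt n}=\frac{1}{\sqrt n}\sum_{i=1}^{q}\bigl(\calV_m^{(i)}-\bbE[\calV_m]\bigr)+o_{\mathbb{P}}(1).
\]

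It remains to run the Lindeberg--Feller theorem on the i.i.d.\ array $\{\calV_m^{(i)}-\bbE[\calV_m]\}_{i=1}^{q}$ rescaled by $\sqrt n$. Its variance equals $\frac{q\,\Var(\calV_m)}{n}=\frac{\Var(\calV_m)}{m}\cdot\frac{qm}{n}\to\sigma^2$, where $\sigma^2:=\lim_{t\to\infty}\Var(\calV_t)/t\in(0,\infty)$ is the content of \Cref{Subsec:variance}; this identifies the constant in the theorem. Writing $Y_m:=\calV_m-\bbE[\calV_m]$, the Lindeberg ratio satisfies, for every $\epsilon>0$,
\[
\frac{q}{n}\,\bbE\bigl[Y_m^2\,\Ind_{\{|Y_m|>\epsilon\sqrt n\}}\bigr]\;\le\;\bbE\Bigl[\tfrac{Y_m^2}{m}\,\Ind_{\{Y_m^2/m>\epsilon^2 n/m\}}\Bigr],
\]
and the right-hand side tends to $0$ because $n/m\to\infty$ and the family $\{Y_m^2/m:m\ge1\}$ is uniformly integrable, which in turn follows from a fourth-moment bound $\bbE[Y_m^4]=O(m^2)$ (obtainable by the same multi-point hitting-probability computations that yield the variance asymptotics). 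Lindeberg--Feller then gives the Gaussian limit for the sum, and combining it with the last display of the previous paragraph finishes the proof.

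The two substantive ingredients are the ones borrowed from \Cref{Subsec:error} and \Cref{Subsec:variance}, and both are where the real work lies. The harder one is the intersection estimate $\beta(m)=O(m^{2-d/\alpha}\vee\log m)$ (together with the half-infinite version needed for $\calR_n$): it rests on the transient hitting-probability asymptotics $\bbP_x(\tau_{\sB}<\infty)\asymp G(x)\asymp|x|^{\alpha-d}$ and on Riesz-potential-type convolution bounds, and it is precisely this estimate, summed over the $\binom{q}{2}$ pairs of blocks, that makes the hypothesis $d/\alpha>3/2$ unavoidable. The second ingredient --- the existence and strict positivity of $\sigma^2=\lim_t\Var(\calV_t)/t$ (rather than just $\Var(\calV_t)=O(t)$), along with the fourth-moment control for the Lindeberg condition --- is likewise delicate; I would expect it to be handled via a covariance representation of $\Var(\calV_t)$ combined with a subadditivity/renewal argument, with a separate lower bound securing $\sigma>0$. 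Everything else above is routine bookkeeping.
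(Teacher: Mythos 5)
Your proposal is correct and follows the same broad strategy as the paper --- a Markov-property block decomposition followed by Lindeberg--Feller --- but the bookkeeping differs in two respects worth noting. The paper takes only $n=\lfloor\log t\rfloor$ blocks of length $t/n$ and uses a \emph{telescoping} inclusion--exclusion: at each step it intersects one block with the union of \emph{all later} blocks (see \cref{eq:decom_of_Vt}), so the error is a sum of only $n-1$ terms, each bounded via \Cref{Cor:moment_bounds} by $c\,h(t/n)$, giving $\bbE[\calR(t)]\le c\,n\,h(t/n)=o(\sqrt t)$ precisely when $d/\alpha>3/2$. You instead take polynomially many blocks $q=n^{1-\theta}$ of length $m=n^\theta$ and use Bonferroni, which produces $\binom{q}{2}$ \emph{pairwise} overlaps; the resulting bound $q^2h(m)$ is cruder by a factor of $q$, but you recover $o(\sqrt n)$ by pushing $\theta$ close to $1$, and the feasibility constraint $\theta\in(3\alpha/(2d),1)$ again encodes exactly $d/\alpha>3/2$. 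Both routes are valid; the paper's telescoping is tighter and parameter-free, while yours is the more textbook-standard big-block scheme. One small point: for non-adjacent blocks $j>i+1$ the two sausage pieces, after recentering at $X_{im}$, are independent but start from a \emph{random} relative offset, so the quantity that bounds $\bbE[\lambda(A_i\cap A_j)]$ is not your $\beta(m)=\bbE[\lambda(\calS_m^{(1)}\cap\calS_m^{(2)})]$ (same origin) but rather the finite-vs-infinite overlap $\bbE[\lambda(\calS_m\cap\calS'_\infty)]$; this is exactly what \Cref{Cor:moment_bounds} controls and it still yields $O(h(m))$, so the conclusion stands, but the bound you invoke should be stated in the infinite-tail form. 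Your treatment of the remainder $\calR_n$, the replacement of $n\Capa(\sB)$ by $\bbE[\calV_n]$ via Getoor's expansion, the variance normalisation $q\,\Var(\calV_m)/n\to\sigma^2$, and the Lindeberg check via the fourth-moment bound ($\bbE[Y_m^4]=O(m^2)$, \Cref{lm:4-th-moment}) all match the paper's argument. You also correctly identify the two load-bearing inputs --- the intersection estimate behind $h$ and the positivity of $\sigma^2$ (\Cref{lm:positivity}) --- as the substantive work.
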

We remark that \Cref{tm:CLT} holds for any closed ball $\sB(x,r)$, with a possibly different constant $\sigma>0$. Moreover, as indicated by \cref{eq:SLLN}, the statement of the theorem remains valid if we replace the term $t\Capa(\sB)$ with $\bbE[\calV_t]$.

Before we embark on the proof of the theorem, which is given at the end of the section, we first need to find satisfactory estimates for the error term in decomposition \cref{eq:VOL}. Next step is to investigate the variance  $\Var(\calV_t) $ of the volume of a stable sausage and to show that it behaves as $\sigma t$ at infinity.

\subsection{Error term estimates}\label{Subsec:error}

We assume that $\X$ is defined on the canonical space $\Omega= \calD([0, \infty), \bbR^d)$ of all c\`{a}dl\`{a}g functions $\omega : [0, \infty) \to \bbR^d$. It is endowed with the Borel $\sigma$-algebra $\calF$ generated by the Skorokhod $\rm{J}_1$ topology. Then $\X$ is understood as the coordinate process, that is, $X_t(\omega) = \omega (t)$, and the shift operator $\theta_t$ acting on $\Omega$ is defined by
\begin{equation*}
	\theta_t \,\omega(s) \,=\, \omega(t + s), \qquad t,s \ge 0.
\end{equation*}
In what follows we use notation
\begin{equation*}
	\calS^K[t, \infty) \,=\, \bigcup_{s\ge t } \{X_s + K\},\qquad  t\ge 0.
\end{equation*}
We also write $\calS^K_\infty = \calS^K[0, \infty)$, $\calV^K_{\infty} = \lambda(\calS^K_{\infty})$ and $\calV^K[t, \infty) = \lambda(\calS^K[t, \infty))$, $t\geq 0$.
We start with a lemma which enables us to represent the expected volume of the intersection of two  sausages in terms of the difference $\bbE[\calV^K_t] - t\Capa(K)$.

\begin{lemma}\label{lm:obtaining_tC(B)}
	For any compact set $K$ and all $t \ge 0$ it holds 
	\begin{equation*}
		\bbE[\calV^K_t] - t\Capa(K) \,=\, \bbE[\lambda(\calS^K_t \cap \calS^K[t, \infty))].
	\end{equation*}
\end{lemma}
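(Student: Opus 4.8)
The plan is to read the identity off the inclusion--exclusion decomposition \eqref{eq:VOL} by sending the second time variable to infinity; the only genuine work is then to compute the limiting expected volume of the re-centred past/future overlap. Throughout I will use that $\bbE[\calV^K_r]<\infty$ for every $r\ge0$ (part of Getoor's expansion \cite{Getoor}) and that $\calS^K_r-X_r$ has the same law as $\calS^K_r$, which is precisely the reversal/rotational-invariance input already invoked for \eqref{eq:VOL}.

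Fix $t\ge0$ and, for $s\ge0$, set $\calS^{(1)}_t=\calS^K_t-X_t$ and $\calS^{(2)}_s=\calS^K[t,t+s]-X_t$, so that \eqref{eq:VOL} reads $\calV^K_{t+s}=\calV^{(1)}_t+\calV^{(2)}_s-\lambda(\calS^{(1)}_t\cap\calS^{(2)}_s)$ with $\calS^{(1)}_t,\calS^{(2)}_s$ independent and distributed as $\calS^K_t,\calS^K_s$. Taking expectations,
\[\bbE\bigl[\lambda(\calS^{(1)}_t\cap\calS^{(2)}_s)\bigr]=\bbE[\calV^K_t]-\bigl(\bbE[\calV^K_{t+s}]-\bbE[\calV^K_s]\bigr).\]
As $s\nearrow\infty$ one has $\calS^{(2)}_s\nearrow\calS^K[t,\infty)-X_t$, hence $\calS^{(1)}_t\cap\calS^{(2)}_s\nearrow(\calS^K_t-X_t)\cap(\calS^K[t,\infty)-X_t)$, whose Lebesgue measure equals $\lambda(\calS^K_t\cap\calS^K[t,\infty))$; by monotone convergence the left-hand side tends to $\bbE[\lambda(\calS^K_t\cap\calS^K[t,\infty))]$. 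It therefore suffices to show that $\bbE[\calV^K_{t+s}]-\bbE[\calV^K_s]\to t\,\Capa(K)$ as $s\nearrow\infty$.

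This limit is where the work lies; it does not follow from \eqref{eq:SLLN} alone. Applying \eqref{eq:VOL} with the roles of the two parameters exchanged gives $\bbE[\calV^K_{t+s}]-\bbE[\calV^K_s]=\bbE[\calV^K_t]-\bbE[\lambda(\calS^K_s\cap\calS^K[s,s+t])]$, so I would prove that $s\mapsto\bbE[\lambda(\calS^K_s\cap\calS^K[s,s+t])]$ is non-decreasing. Conditioning at time $s$, writing $\calS^K[s,s+t]=X_s+\calR$ with $\calR$ distributed as $\calS^K_t$ and independent of $\calF_s$, and then using Fubini together with the substitution $a\mapsto a+X_s$ in the Lebesgue integral, one obtains
\[\bbE\bigl[\lambda(\calS^K_s\cap\calS^K[s,s+t])\bigr]=\int_{\bbR^d}\bbP_0\bigl(a\in\calS^K_s-X_s\bigr)\,\bbP_0\bigl(a\in\calS^K_t\bigr)\,\D a=\int_{\bbR^d}g_s(a)\,g_t(a)\,\D a,\]
where $g_r(a)=\bbP_0(\tau_{a-K}\le r)$ and the last equality uses $\calS^K_s-X_s\overset{d}{=}\calS^K_s$. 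Each $g_r$ is non-decreasing in $r$, so the integral is non-decreasing in $s$; combined with $\bbE[\calV^K_{t+s}]-\bbE[\calV^K_s]\ge0$ this shows that $\ell(t):=\lim_{s\nearrow\infty}(\bbE[\calV^K_{t+s}]-\bbE[\calV^K_s])$ exists in $[0,\infty)$.

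Finally I would identify $\ell$. Writing $\bbE[\calV^K_{s+t_1+t_2}]-\bbE[\calV^K_s]$ as a sum of two increments of lengths $t_1$ and $t_2$ and letting $s\nearrow\infty$ gives $\ell(t_1+t_2)=\ell(t_1)+\ell(t_2)$; as $\ell$ is also non-decreasing, $\ell(t)=ct$ for some $c\ge0$. Since $\bbE[\calV^K_n]-\bbE[\calV^K_0]=\sum_{k=1}^n(\bbE[\calV^K_k]-\bbE[\calV^K_{k-1}])$ and each summand converges to $\ell(1)=c$, Ces\`aro summation yields $\bbE[\calV^K_n]/n\to c$, so $c=\Capa(K)$ by \eqref{eq:SLLN}. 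Plugging $\ell(t)=t\,\Capa(K)$ into the identity from the second paragraph gives $\bbE[\lambda(\calS^K_t\cap\calS^K[t,\infty))]=\bbE[\calV^K_t]-t\,\Capa(K)$, as desired. (Alternatively, the limit $\bbE[\calV^K_{t+s}]-\bbE[\calV^K_s]\to t\,\Capa(K)$ can be read off directly from the precise asymptotics of $\bbE[\calV^K_t]-t\,\Capa(K)$ in \cite[Theorem~2]{Getoor}, bypassing the monotonicity argument.)
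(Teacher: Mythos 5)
Your proof is correct but takes a genuinely different route from the paper's. The paper proves the lemma directly: after rewriting the claim as $\bbE[\lambda(\calS^K_t\setminus\calS^K[t,\infty))]=t\Capa(K)$ (recorded as \cref{eq:LM2.1A} and reused later, e.g.\ in the proof of \Cref{lm:positivity}), it identifies $\{x\in\calS^K_t\setminus\calS^K[t,\infty)\}$ with the event that the last exit time $\eta_K$ from $K$ under $\bbP_x$ lies in $(0,t]$, and then computes $\int_{\bbR^d}\bbP_x(0<\eta_K\le t)\,\D x$ exactly via the Green-function representation \eqref{eq:G*mu=P(T<infty)} of the equilibrium measure; nothing is passed to a limit, and $t\Capa(K)$ drops out. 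You instead send $s\nearrow\infty$ in the inclusion--exclusion identity \eqref{eq:VOL}, reducing the lemma to $\bbE[\calV^K_{t+s}]-\bbE[\calV^K_s]\to t\Capa(K)$, and establish that limit softly: a Fubini computation shows $s\mapsto\bbE[\lambda(\calS^K_s\cap\calS^K[s,s+t])]$ is non-decreasing, giving existence of $\ell(t)$; additivity together with monotonicity in $t$ force $\ell$ to be linear; and Ces\`aro summation combined with \eqref{eq:SLLN} pins the slope to $\Capa(K)$. Your route is more elementary (no explicit equilibrium-measure density or last-exit-time distribution is needed, only the Markov property, rotational invariance and \eqref{eq:SLLN}) and it makes transparent that the lemma expresses a stabilisation of the inclusion--exclusion error; the paper's computation is shorter and yields the auxiliary identity \eqref{eq:LM2.1A} for free. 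One caveat: your parenthetical suggestion of reading the limit off Getoor's expansion directly is less immediate than it sounds --- when $d/\alpha\le 2$ the remainder $h$ grows, so the difference of two $\calO(h(\cdot))$ remainders at $t+s$ and $s$ is not automatically $\mathsf{o}(1)$; the monotonicity/Cauchy argument is the one that genuinely closes that step and should remain the primary justification.
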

\begin{proof}
We clearly have 
	\begin{equation*}
		\calV^K_{\infty} \,=\, \calV^K_t + \calV^K[t, \infty) - \lambda(\calS^K_t \cap \calS^K[t, \infty))
	\end{equation*}
which implies
	\begin{equation*}
		\lambda(\calS^K_t \cap \calS^K[t, \infty)) \,=\, \calV^K_t - \lambda(\calS^K_t \setminus \calS^K[t, \infty)).
	\end{equation*}
	Hence, it suffices to show that
	\begin{equation}\label{eq:LM2.1A}
		\bbE[\lambda(\calS^K_t \setminus \calS^K[t, \infty)] \,=\, t\Capa(K),\qquad t\ge0.
	\end{equation}
	By rotational invariance of the process $\X$ we have	
	\begin{align}\label{al:obtaining_tC(B)}
		\bbE[\lambda(\calS^K_t \setminus \calS^K[t, \infty)]
		& \,=\, \int_{\bbR^d} \bbP(x \in \calS^K_t \setminus \calS^K[t, \infty))\,\D x \nonumber \\
		& \,=\, \int_{\bbR^d} \bbP_x \Big(\bigcup_{0\leq s\leq t}\{X_s \in K\},\, \bigcap_{s\geq t}\{X_s\notin K\}\Big)\,\D x\nonumber \\
		& \,=\, \int_{\bbR^d} \bbP_x(0 < \eta_{K} \le t)\,\D x,
	\end{align}
	where $\eta_{K}$ is the last exit time of the process $\X$ from  the set $K$, that is,
	\begin{equation*}
		\eta_{K} \,=\, \begin{cases}
			\sup\{t > 0 : X_t \in K\}, & \tau_{K} < \infty, \\
			0, & \tau_{K} = \infty.
		\end{cases}
	\end{equation*}
We observe that $\{\eta_K > t\} = \{\tau_K \circ \theta_t < \infty\}$ which together with \cref{eq:G*mu=P(T<infty)} yields
	\begin{align*}
		\bbP_x(\eta_{K} > t)
		& \,=\, \int_{\bbR^d} p(t, x, y) \,\bbP_y(\tau_{K} < \infty) \,\D y \,=\, \int_{\bbR^d} \int_t^{\infty} p(s, x, z)\, \D s\,\mu_{K}(\D z) ,
	\end{align*}
	where we used notation $p(t,x,y) = p(t,y-x)$.
We obtain 
	\begin{equation*}
		\bbP_x(0 < \eta_{K} \le t) \,=\, \int_{\bbR^d}  \int_0^t p(s, x, y)\,\D s\,\mu_{K}(\D y).
	\end{equation*}
This and  \cref{al:obtaining_tC(B)} imply
	\begin{equation*}
		\bbE[\lambda(\calS^K_t \setminus \calS^K[t, \infty))] \,=\, \int_{\bbR^d}  \int_0^t  \int_{\bbR^d} p(s, y, x)\, \D x\,\D s\,\mu_{K}(\D y) \,=\, t\Capa(K),
	\end{equation*}
and the proof is finished.
\end{proof}

In the following lemma we show how one can easily estimate the higher moments of the expected volume of the intersection of two sausages through the first moment estimate.
\begin{lemma}\label{LM:2.2}
Let $\X^\prime$ be an independent copy of the process $\X$ such that $X_0=X^\prime_0$, and let $\calS'_t$, $t\ge0$, denote the sausage associated with $\X^\prime$. Then 
	for all $k \in \bbN$ and $t \ge 0$ it holds 
	\begin{equation*}
		\bbE[\lambda(\calS_t \cap \calS'_{\infty})^k] \,\le\, 2^{k-1}(k!)^2\, \big(\bbE[\lambda(\calS_t \cap \calS'_{\infty})]\big)^k.
	\end{equation*}
\end{lemma}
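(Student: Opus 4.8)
The plan is to reduce the $k$-th moment bound to the first-moment estimate by exploiting the regenerative structure of the sausage under the Markov property. Write $\calS_t \cap \calS'_\infty$ as an integral over space: $\lambda(\calS_t \cap \calS'_\infty) = \int_{\bbR^d} \Ind\{x \in \calS_t\}\Ind\{x \in \calS'_\infty\}\,\D x$. Raising to the $k$-th power and using Fubini, $\bbE[\lambda(\calS_t \cap \calS'_\infty)^k]$ becomes a $k$-fold spatial integral of $\bbP$ that all of $x_1,\dots,x_k$ lie in $\calS_t$ and in $\calS'_\infty$. Since $\X$ and $\X'$ are independent, this probability factors as $\bbP(x_1,\dots,x_k \in \calS_t)\cdot\bbP(x_1,\dots,x_k \in \calS'_\infty)$, and by symmetry in the $x_i$ one can restrict to an ordering and multiply by $k!$ — this is where one factor of $k!$ will come from on each side, or rather a combinatorial bookkeeping that I expect to produce the $(k!)^2$.

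The key step is the following iteration. For a single process, $\bbP(x_1,\dots,x_k \in \calS_\infty)$ can be estimated by conditioning on the first time the sausage reaches $x_{(1)}$ (the ``first'' of the points in an order determined by the path), applying the strong Markov property at that time, and observing that the remaining points must then lie in a shifted copy of $\calS_\infty$; this gives a recursive inequality of the form $\bbP(x_1,\dots,x_k\in\calS_\infty) \le \sum (\text{hitting probability of }x_{(1)})\cdot\bbP(\text{remaining }k-1\text{ points} \in \calS_\infty \text{ shifted})$. Iterating $k$ times and summing over the $k!$ possible orders in which the points get absorbed yields a bound of the shape $k!\,\prod_{i} (\text{two-point-type quantities})$ which, after integrating in space, collapses to $k!\,(\bbE[\lambda(\calS_t\cap\calS'_\infty)])^k$ up to constants. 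Doing this for both the $\X$-sausage (restricted to $[0,t]$) and the $\X'$-sausage (on $[0,\infty)$) and combining via Cauchy–Schwarz or a direct product estimate produces the $(k!)^2$; the $2^{k-1}$ is the price of repeatedly splitting events like $\{x\in\calS_t\}$ into ``$x$ is hit by the piece before $x_{(1)}$'' versus ``after'', i.e.\ a binary choice made $k-1$ times.

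Concretely I would first prove the statement for $\calS'_\infty$ replaced by $\calS_t$ on both factors if that is cleaner, then note the argument is insensitive to the time horizon, and finally assemble. A cleaner alternative worth trying: set $Y \df \lambda(\calS_t\cap\calS'_\infty)$ and show directly that $\bbE[Y^k \mid \calS'_\infty] \le (\text{const})^k (k!)^{?}\, (\bbE[Y\mid \calS'_\infty])^k$ by a last-exit / first-entrance decomposition of $\calS_t$ relative to the fixed (conditioned) set $\calS'_\infty$, treating $A \df \calS'_\infty$ as a frozen target set and using that successive visits of $\X$ to $A$ regenerate; then take expectations and apply Jensen in the form $\bbE[(\bbE[Y\mid\calS'_\infty])^k] $ versus $(\bbE Y)^k$ — but this last move goes the wrong way, so one actually needs the two-sided symmetric argument and cannot condition on just one side.

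The main obstacle I anticipate is organizing the combinatorics of the recursive hitting-time decomposition so that the constants come out as exactly $2^{k-1}(k!)^2$ rather than something worse: one must be careful that when applying the strong Markov property at the time the path reaches the ``next'' point, the residual sausage is genuinely an independent translated copy and that the hitting probabilities telescope against the Green function in a way that, after the spatial integration, reconstitutes powers of the single quantity $\bbE[\lambda(\calS_t\cap\calS'_\infty)]$. Getting the two $k!$ factors — one from ordering the points along the $\X$-path and one from ordering along the $\X'$-path — to appear cleanly, while the binary ``before/after $x_{(1)}$'' splits contribute only $2^{k-1}$ and not $2^k$ or $4^{k}$, will require writing out the $k=2$ and $k=3$ cases explicitly to pin down the induction.
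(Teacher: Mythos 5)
Your proposal has the right skeleton and matches the paper's first few steps: write the $k$-th moment as a $k$-fold spatial integral via Fubini, factor $\bbP(x_1,\dots,x_k\in\calS_t)\,\bbP(x_1,\dots,x_k\in\calS'_\infty)$ by independence of $\X$ and $\X'$, order the hitting times of the $k$ balls $\sB-x_i$ to introduce a $k!$ (once for each process, giving $(k!)^2$), and peel off points one at a time with the strong Markov property. That part is correct and is exactly what the paper does. What you are missing is the specific step that makes the iteration close on the first-moment quantity, and your explanation for where $2^{k-1}$ comes from is wrong.

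Concretely: after applying the strong Markov property at the hitting time of $\sB-x_{k-1}$, you do \emph{not} get ``the remaining points must lie in a shifted copy of $\calS_\infty$'' starting from the previous $x_{k-1}$; what you get is $\sup_{z\in\sB-x_{k-1}}\bbP_z(\tau_{\sB-x_k}\le t)$, i.e.\ the starting point is an \emph{arbitrary} point of the shifted unit ball, not a fixed one. Since $\sB-w\subseteq\sB(0,2)$ for $w\in\sB$, this supremum is controlled by $\bbP_{x_k-x_{k-1}}(\tau_{\sB(0,2)}\le t)$, an inflated-ball probability, and to convert it back to the unit ball one needs the stable scaling relation $\tau^x_{\sB(0,2)}\overset{(\mathrm{d})}{=}2^\alpha\tau^{x/2}_\sB$, yielding $\bbP_{(x_k-x_{k-1})/2}(\tau_\sB\le t)$. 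It is precisely this halving of the increment that, after the change of variables in the $k$-fold spatial integral, reconstitutes $k$ copies of $\bbE[\lambda(\calS_t\cap\calS'_\infty)]=\int_{\bbR^d}\bbP_x(\tau_\sB\le t)\,\bbP_x(\tau_\sB<\infty)\,\D x$ and generates the power-of-$2$ constant. Your proposed mechanism for $2^{k-1}$ — a binary ``before/after $x_{(1)}$'' split performed $k-1$ times — is not how the constant arises and I do not see how to make that idea produce a telescoping product of one-point quantities; the ordering of hitting times already exhausts the relevant event decomposition, and an additional before/after split would double-count. Also, Cauchy--Schwarz plays no role in combining the two factors (you simply multiply the two bounds and integrate), and the hitting probabilities do not telescope against Green functions but against $\bbP_\cdot(\tau_\sB\le t)$ (finite-horizon) and $\bbP_\cdot(\tau_\sB<\infty)$ respectively. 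Your second alternative (conditioning on $\calS'_\infty$ plus Jensen) indeed goes the wrong way, as you already noted. So the gap is the scaling step; once you insert it, the induction you sketch goes through, though note that the constant it actually produces from the Jacobian is $2^{d(k-1)}$ rather than $2^{k-1}$ — harmless for the application in \cref{Cor:moment_bounds}, but worth writing out to see that the $2^{k-1}$ in the statement is not coming from where you thought.
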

\begin{proof}
	We observe that
	\begin{equation}\label{al:expression_for_the_error_term-1st_moment}
	\bbE[\lambda(\calS_t \cap \calS'_{\infty})]
	  \,=\ \int_{\bbR^d} \bbP(x \in \calS_t)\, \bbP(x \in \calS'_{\infty})\,\D x  
	\,=\, \int_{\bbR^d} \bbP_x(\tau_{\sB} \le t)\, \bbP_x(\tau_{\sB} < \infty)\, \D x,
	\end{equation}	
	where we used  rotational invariance of  $\X$. Similarly, for $k\ge1$ we have 
	\begin{equation}\label{al:k-th_moment-1st_part}
\bbE[\lambda(\calS_t \cap \calS'_{\infty})^k]
 \,=\, \int_{\bbR^d} \cdots \int_{\bbR^d} \bbP(x_1, \ldots, x_k \in \calS_t)\, \bbP(x_1, \ldots, x_k \in \calS_{\infty})\,\D x_1\cdots \D x_k.
	\end{equation}
By the strong Markov property, we obtain
	\begin{equation}
	\begin{aligned}\label{al:x1...xk_in_S-iteration-1st_part}
\bbP(x_1, \ldots, x_k \in \calS_t)
& \,=\, \bbP(\tau_{\sB - x_1} \le t, \ldots, \tau_{\sB-x_k} \le t) \\&\,\le\, k! \,\bbP(\tau_{\sB-x_1} \le \cdots \le \tau_{\sB-x_k} \le t) \\
& \,\le\, k!\, \bbP(\tau_{\sB-x_1} \le \cdots \le \tau_{\sB-x_{k - 1}} \le t)\!\! \sup_{z \in \sB-x_{k - 1}} \bbP_z(\tau_{\sB-x_k} \le t).
\end{aligned}
	\end{equation}
For any $w \in \sB$ we have $\sB - w \subseteq \sB(0, 2)$ and whence
	\begin{equation*}
		\bbP_{w - x_{k - 1}} (\tau_{\sB-x_k} \le t) \,=\, \bbP_{x_k - x_{k - 1}} (\tau_{\sB - w} \le t) \,\le\, \bbP_{x_k - x_{k - 1}} (\tau_{\sB(0, 2)}\le t).
	\end{equation*} 
	For $x\in\bbR^d$ and $B\in\mathfrak{B}(\bbR^d)$ we set
 $\tau_B^x = \inf\{t \ge 0 : x + X_t \in B\}$ and show that $\tau_{\sB(0, 2)}^x \overset{({\rm d})}{=} 2^\alpha\tau_{\sB}^{x/2}$, that is, the random variables $\tau_{\sB(0, 2)}^x $ and $2^\alpha\tau_{\sB}^{x/2}$ are equal in distribution. Indeed, the easy calculation yields
	\begin{align*}
		\tau_{\ \sB(0, 2)}^x
		& \,=\, \inf\{t \ge 0 : \aps{x + X_t} \le 2\} \,=\, \inf\VIT{t \ge 0 : \APS{\frac{x}{2} + \frac{X_t}{2} } \le 1} \\
		& \,\overset{({\rm d})}{=}\, \inf\VIT{t \ge 0 : \APS{\frac{x}{2} + X_{t/2^{\alpha}}} \le 1} \,=\, 2^{\alpha} \inf\VIT{s \ge 0 : \APS{\frac{x}{2} + X_s} \le 1} \,=\, 2^{\alpha} \tau_{\sB}^{x/2}.
	\end{align*}
	This implies that for arbitrary $w \in \sB$,
	\begin{equation*}
		\bbP_{w - x_{k - 1}}(\tau_{\sB-x_k} \le t) \,\le\, \bbP(\tau_{\sB(0, 2)}^{x_k - x_{k - 1}} \le t)\, =\, \bbP(2^{\alpha}\tau_{\sB}^{(x_k - x_{k - 1})/2} \le t) \,\le\, \bbP_{\frac{x_k - x_{k - 1}}{2}} (\tau_{\sB} \le t).
	\end{equation*}
	In particular,
	\begin{equation*}
		\sup_{z \in \sB - x_{k - 1}} \bbP_z(\tau_{\sB-x_k} \le t) \,\le\, \bbP_{\frac{x_k - x_{k - 1}}{2}} (\tau_{\sB} \le t).
	\end{equation*}
	By combining this with \cref{al:x1...xk_in_S-iteration-1st_part} and iterating the whole procedure, we obtain
	\begin{equation*}
		\bbP(x_1, \ldots, x_k \in \calS_t)\, \le\, k!\, \bbP_{x_1}(\tau_{\sB}  \le t) \,\bbP_{\frac{x_2 - x_1}{2}} (\tau_{\sB}  \le t) \cdots \bbP_{\frac{x_k - x_{k - 1}}{2}} (\tau_{\sB} \le t).
	\end{equation*}
	Similarly, it follows that
	\begin{equation*}
		\bbP(x_1, \ldots, x_k \in \calS_{\infty}) \,\le\, k!\, \bbP_{x_1}(\tau_{\sB} < \infty)\, \bbP_{\frac{x_2 - x_1}{2}} (\tau_{\sB} < \infty) \cdots \bbP_{\frac{x_k - x_{k - 1}}{2}} (\tau_{\sB}  < \infty).
	\end{equation*}
	Applying the last two inequalities to \cref{al:k-th_moment-1st_part} and using  \cref{al:expression_for_the_error_term-1st_moment} finishes the proof.
\end{proof}

\begin{corollary}\label{Cor:moment_bounds}
In the notation of  \Cref{LM:2.2}, 
for all $k\in \bbN$ and $t>0$ large enough there is a constant $c=c(d,\alpha)>0$ such that
\begin{equation}\label{eq:k-th_moment_bound}
	\bbE[\lambda(\calS_t \cap \calS'_{\infty})^k]  \,\le\, 2^{k-1}(k!)^2c^k \,h(t)^k,
\end{equation} 
where the function $h:(0,\infty)\to (0,\infty)$ is defined as 
\begin{equation}\label{eq:def_of_hd}
	h(t) \,=\,
	\begin{cases}
		1, & d/\alpha > 2, \\
		\log(t+\E), & d/\alpha = 2, \\
		t^{2 - d/\alpha}, & d/\alpha \in (1, 2).
	\end{cases}
\end{equation}
\end{corollary}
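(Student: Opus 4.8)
The statement is an immediate consequence of \Cref{LM:2.2}: raising the bound there to the $k$-th power, \cref{eq:k-th_moment_bound} follows as soon as we know that
\[
  \bbE[\lambda(\calS_t \cap \calS'_\infty)] \,\le\, c\,h(t)\qquad\text{for all sufficiently large }t,
\]
with $h$ as in \cref{eq:def_of_hd}; indeed then $\bigl(\bbE[\lambda(\calS_t \cap \calS'_\infty)]\bigr)^k \le c^k h(t)^k$. So the whole content of the corollary is this first-moment bound, and the plan is to prove it directly from the identity
\[
  \bbE[\lambda(\calS_t \cap \calS'_\infty)] \,=\, \int_{\bbR^d} \bbP_x(\tau_{\sB} \le t)\,\bbP_x(\tau_{\sB} < \infty)\,\D x
\]
established in \cref{al:expression_for_the_error_term-1st_moment}.

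The two factors under the integral are controlled by standard potential theory and heat kernel bounds for $\X$. From \cref{eq:G*mu=P(T<infty)} and the exact Green function $G(x) = C_{d,\alpha}|x|^{\alpha - d}$ one gets $\bbP_x(\tau_{\sB} < \infty) \le c_1 |x|^{\alpha - d}$ for $|x| \ge 4$ (and trivially $\le 1$ for all $x$). For the first factor I would use an occupation-time comparison: on $\{\tau_{\sB} \le t\}$ one has, by the strong Markov property at $\tau_{\sB}$ and the fact that the process started at any point of $\sB$ spends at least a fixed positive expected amount of time in $\sB(0,2)$ during the following unit of time, the inequality $\delta\,\bbP_x(\tau_{\sB} \le t) \le \bbE_x\bigl[\int_0^{t+1}\Ind_{\sB(0,2)}(X_s)\,\D s\bigr] = \int_0^{t+1}\int_{\sB(0,2)} p(s,x,y)\,\D y\,\D s$; plugging in $p(s,x,y) \le c_2\min(s^{-d/\alpha},\, s|x-y|^{-d-\alpha})$ and using $|x-y| \ge |x|/2$ for $y \in \sB(0,2)$, $|x| \ge 4$, an elementary integration (where the transience condition $d/\alpha > 1$ is exactly what makes $\int_{|x|^\alpha}^{\infty} s^{-d/\alpha}\,\D s$ converge) gives
\[
  \bbP_x(\tau_{\sB} \le t) \,\le\, c_3\min\bigl(|x|^{\alpha - d},\,(t+1)^2|x|^{-d-\alpha}\bigr),\qquad |x|\ge 4,
\]
together with the trivial bound $\le 1$.

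With these two bounds it remains to split $\bbR^d$ into $\{|x| \le 4\}$, $\{4 < |x| \le (t+1)^{1/\alpha}\}$ and $\{|x| > (t+1)^{1/\alpha}\}$ and integrate in polar coordinates. The first region contributes $O(1)$. On the second the integrand is $\le c|x|^{2(\alpha - d)}$, so its contribution is of order $\int_4^{(t+1)^{1/\alpha}} r^{2\alpha - d - 1}\,\D r$, which is $O(1)$, $O(\log t)$, or $O(t^{2 - d/\alpha})$ according as $d/\alpha > 2$, $d/\alpha = 2$, or $d/\alpha \in (1,2)$. On the third the integrand is $\le c(t+1)^2|x|^{-2d}$, so its contribution is of order $(t+1)^2\bigl((t+1)^{1/\alpha}\bigr)^{-d} = (t+1)^{2 - d/\alpha}$, which is $O(h(t))$ in all three regimes. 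Comparing with \cref{eq:def_of_hd} yields $\bbE[\lambda(\calS_t \cap \calS'_\infty)] \le c\,h(t)$ for $t$ large, and \Cref{LM:2.2} finishes the proof. (One may also bypass the computation: by \Cref{lm:obtaining_tC(B)} and the Markov property — writing $\calS^{\sB}[t,\infty)$ as $X_t$ plus an independent copy of $\calS_\infty$ and noting $\calS_t - X_t \overset{({\rm d})}{=}\calS_t$ by reversal of increments — the first moment equals $\bbE[\calV_t] - t\Capa(\sB)$, whose first-order asymptotics are precisely those in \cref{eq:def_of_hd} by Getoor's \cite[Theorem~2]{Getoor}.)

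I expect the main obstacle to be exactly the bound on $\bbP_x(\tau_{\sB} \le t)$: the crude inequality $\bbP_x(\tau_{\sB} \le t) \le \bbP_x(\tau_{\sB} < \infty)$ only gives the case $d/\alpha > 2$, and obtaining the sharp exponent $2 - d/\alpha$ (and the logarithm at $d/\alpha = 2$) genuinely requires the $t$-dependence produced by the occupation-time/heat-kernel argument above; once that estimate is in place, the remaining splitting of the integral is routine.
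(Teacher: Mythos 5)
Your proposal is correct, and in fact you present both a new route and, in the final parenthesis, the paper's own route. The paper's proof is exactly your ``bypass'': by the Markov property and translation invariance,
\[
\bbE[\lambda(\calS_t \cap \calS'_{\infty})]
\,=\,\bbE[\lambda((\calS_t - X_t)\cap(\calS[t,\infty) - X_t))]
\,=\,\bbE[\lambda(\calS_t\cap\calS[t,\infty))],
\]
which by \Cref{lm:obtaining_tC(B)} equals $\bbE[\calV_t] - t\Capa(\sB)$; then \cref{eq:EXP} rewrites this as $\int_{\bbR^d}\bbP_x(\tau_\sB\le t)\,\D x - t\Capa(\sB)$ and the paper simply quotes Getoor's Theorem~2, which asserts precisely that this difference is $O(h(t))$, so the case $k=1$ is finished in one line and $k>1$ follows from \Cref{LM:2.2}. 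Your main argument instead reproves the $O(h(t))$ bound from scratch: you derive $\bbP_x(\tau_\sB\le t)\le c\min(|x|^{\alpha-d},(t+1)^2|x|^{-d-\alpha})$ via the occupation-time comparison (strong Markov at $\tau_\sB$ plus the uniform lower bound on expected time spent in $\sB(0,2)$) together with the two-sided heat kernel bound $p(s,x)\le c\min(s^{-d/\alpha},\,s|x|^{-d-\alpha})$ (valid also at $\alpha=2$, where it is non-sharp but correct), then split $\bbR^d$ at radius $(t+1)^{1/\alpha}$ and integrate in polar coordinates; the three regimes $d/\alpha>2$, $=2$, $\in(1,2)$ come out exactly as in \cref{eq:def_of_hd}. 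This is longer than the paper's proof but self-contained and arguably more transparent, since it avoids citing the full force of Getoor's asymptotic expansion and makes visible exactly where the three cases split. Both arguments are sound; your closing observation that the naive bound $\bbP_x(\tau_\sB\le t)\le\bbP_x(\tau_\sB<\infty)$ covers only $d/\alpha>2$ is also accurate and explains why a genuinely $t$-dependent tail bound (or Getoor's theorem) is unavoidable.
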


\begin{proof}
It follows from \cite[Theorem 2]{Getoor} that there is a constant $c(d,\alpha) > 0$ such that for $t>0$ large enough
\begin{equation}\label{eq:Getoor}
	\int_{\bbR^d} \bbP_x(\tau_{\sB} \le t)\,\D x - t\Capa(\sB) \,\le\, c(d,\alpha) \,h(t),
\end{equation}
where the function $h(t)$ is given in \cref{eq:def_of_hd}.
We observe that  by the Markov property and translation invariance of $\lambda (\D x)$ we have
\begin{align*}
\bbE[\lambda(\calS_t \cap \calS'_{\infty})]&\,=\, \bbE[\lambda((\calS_t - X_t) \cap (\calS[t, \infty) - X_t))]\,=\,\bbE[\lambda(\calS_t \cap \calS[t, \infty))].
\end{align*}
Thus, \cref{eq:EXP} combined with \Cref{lm:obtaining_tC(B)} 
and \cref{eq:Getoor}  implies the assertion for $k=1$.
For $k>1$ the result follows by \Cref{LM:2.2}.
\end{proof}

\subsection{Variance of the volume of a stable sausage}\label{Subsec:variance}

Our aim in this section is to determine the constant $\sigma$ in  \Cref{tm:CLT}. We can easily adapt the approach of \cite[Lemma 4.3]{CSS19} to the present setting and combine it with \cite[Theorem 2]{Hammersley} to conclude that the limit below exists
\begin{equation}\label{eq:VAR}
	\lim_{t \nearrow \infty} \frac{\Var(\calV_t)}{t} \,=\,\sigma^2.
\end{equation} 
The main difficulty is to show that $\sigma$ is strictly positive, and this is obtained in the following crucial lemma. We adapt the proof of \cite[Lemma 4.2]{Le-Gall} but let us emphasize that it is a laborious task to adjust it to the case of stable processes.

\begin{lemma}\label{lm:positivity}
	The constant $\sigma$ in \cref{eq:VAR} is strictly positive.
\end{lemma}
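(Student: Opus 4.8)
The plan is to adapt the proof of \cite[Lemma 4.2]{Le-Gall}: decompose $\Var(\calV_n)$ (for integer $n$; \cref{eq:VAR} clearly holds along integers as well) into a sum of squared martingale innovations, one per unit-time increment of $\X$, and bound each innovation from below uniformly. Write $\calF_i=\sigma(X_s:0\le s\le i)$. Since $\calV_n\in L^2$ (use $\calV_n\le\sum_{i=1}^n\lambda(\calS[i-1,i])$ with i.i.d.\ summands distributed as $\calV_1$, whose second moment is finite as in \Cref{LM:2.2}), the orthogonal decomposition $\calV_n-\bbE[\calV_n]=\sum_{i=1}^nD_i$, $D_i=\bbE[\calV_n\mid\calF_i]-\bbE[\calV_n\mid\calF_{i-1}]$, gives $\Var(\calV_n)=\sum_{i=1}^n\bbE[D_i^2]$ and $\bbE[D_i^2\mid\calF_{i-1}]=\Var\bigl(\bbE[\calV_n\mid\calF_i]\mid\calF_{i-1}\bigr)$. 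By the Markov property at time $i$ (cf.\ the proof of \Cref{lm:obtaining_tC(B)}), with $m:=n-i$,
\[
  \bbE[\calV_n\mid\calF_i]\,=\,\calV_i+\bbE[\calV_{m}]-\int_{\calS_i}\bbP_{z-X_i}(\tau_{\sB}\le m)\,\D z .
\]
Recentering at $X_{i-1}$, write $B_0:=\calS_{i-1}-X_{i-1}\supseteq\sB$, let $\sigma_i$ be the sausage of the $i$-th increment process $(X_{i-1+u}-X_{i-1})_{0\le u\le1}$, set $e_i:=X_i-X_{i-1}$ and $\psi_i(e):=\int_{B_0^{c}}\bbP_{w-e}(\tau_{\sB}\le m)\,\D w$. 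Using $\int_{\bbR^d}\bbP_{w-e}(\tau_{\sB}\le m)\,\D w=\int_{\bbR^d}\bbP_{w}(\tau_{\sB}\le m)\,\D w$, a short computation shows that $\bbE[\calV_n\mid\calF_i]$ equals an $\calF_{i-1}$-measurable random variable plus
\[
  Y_i\,:=\,\lambda(\sigma_i\setminus B_0)+\bigl(\psi_i(e_i)-\psi_i(0)\bigr)-\int_{\sigma_i\setminus B_0}\bbP_{w-e_i}(\tau_{\sB}\le m)\,\D w,
\]
so that $\bbE[D_i^2\mid\calF_{i-1}]=\Var(Y_i\mid\calF_{i-1})$, with $Y_i$ a function of the ($\calF_{i-1}$-independent) $i$-th increment.

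Two estimates are needed. \emph{(a) The boundary correction.} From $\int_{\bbR^d}(\bbP_{w-e}(\tau_{\sB}\le m)-\bbP_{w}(\tau_{\sB}\le m))\,\D w=0$ we get $\psi_i(e)-\psi_i(0)=-\int_{B_0}(\bbP_{w-e}(\tau_{\sB}\le m)-\bbP_{w}(\tau_{\sB}\le m))\,\D w$, and the standard gradient estimate $|\nabla_w\bbP_{w}(\tau_{\sB}\le m)|\lesssim(1+|w|)^{-1}\bbP_{w}(\tau_{\sB}\le m)$ for $w\notin\sB$ (with an absolute bound near $\partial\sB$) gives $|\psi_i(e)-\psi_i(0)|\lesssim|e|\,(1+Z_i)$, where $Z_i:=\int_{\calS_{i-1}}(1+|z-X_{i-1}|)^{-1}\bbP_{z-X_{i-1}}(\tau_{\sB}\le m)\,\D z$. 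By the time-reversal property of L\'evy processes and rotational invariance, $\calS_{i-1}-X_{i-1}$ has the law of $\calS_{i-1}$, so $\bbE[Z_i]\lesssim\int_{\bbR^d}(1+|w|)^{-1}\bbP_{w}(\tau_{\sB}\le m)\,\bbP_{w}(\tau_{\sB}\le i-1)\,\D w$, and with $\bbP_{w}(\tau_{\sB}\le t)\lesssim\min(1,|w|^{\alpha-d})$ this is bounded \emph{uniformly in $i$ and $m$} as soon as $\int_1^\infty r^{2\alpha-d-2}\,\D r<\infty$, i.e.\ $d/\alpha>2-1/\alpha$; since $\alpha\le2$ this follows from $d/\alpha>3/2$. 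This is the only place where the standing assumption is used. \emph{(b) The good event.} Fix a unit vector $u$ and choose $R$ so large that $\bbP_0(\tau_{\sB(Ru,2)}<\infty)\le\tfrac12$ and $\sup_{|v|\ge R-1}\bbP_{v}(\tau_{\sB}<\infty)\le\tfrac12$ (possible by transience, since these hitting probabilities tend to $0$ as $R\to\infty$), then $\eta$ large with $\bbP(Z_i>\eta)\le\tfrac14$ (by (a) and Markov), then $\delta_0>0$ small with $C\delta_0(1+\eta)<\tfrac{c_2}{10}$ and $\lambda(\sB(0,1+\delta_0))-\lambda(\sB)<\tfrac{c_2}{2}$, where $c_2:=\tfrac12\lambda(\sB(0,1-\delta_0))>0$; all of these are absolute constants. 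Put $G_i:=\{\calS_{i-1}\cap\sB(X_{i-1}+Ru,1)=\emptyset\}\cap\{Z_i\le\eta\}\in\calF_{i-1}$; by reversal and transience, $\bbP(\calS_{i-1}\cap\sB(X_{i-1}+Ru,1)=\emptyset)\ge\bbP_0(\tau_{\sB(Ru,2)}=\infty)\ge\tfrac12$, so $\bbP(G_i)\ge\tfrac14$ for every $i$.

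Now fix $i_0\le i\le n/2$ (so $m\ge n/2$ is large) and work conditionally on $\calF_{i-1}$ on $G_i$. With probability $p_1=p_1(\delta_0)>0$ the $i$-th increment path stays in $\sB(0,\delta_0)$; then $\sigma_i\setminus B_0$ lies in the annulus $\sB(0,1+\delta_0)\setminus\sB$, so $0\le\lambda(\sigma_i\setminus B_0)<\tfrac{c_2}{2}$, and with (a), $|Y_i|\le\tfrac{3c_2}{5}$. With probability $p_2=p_2(\delta_0,R)>0$ the increment makes a round trip from $0$ into $\sB(Ru,\delta_0)$ and back into $\sB(0,\delta_0)$ in time $1$; then $\sB(Ru,1-\delta_0)\subseteq\sigma_i$ while $\sB(Ru,1)\cap B_0=\emptyset$ on $G_i$, hence $\sB(Ru,1-\delta_0)\subseteq\sigma_i\setminus B_0$, and since $|w-e_i|\ge R-1$ for $w\in\sB(Ru,1-\delta_0)$ and $e_i\in\sB(0,\delta_0)$ we have $\bbP_{w-e_i}(\tau_{\sB}\le m)\le\tfrac12$ there, so $\lambda(\sigma_i\setminus B_0)-\int_{\sigma_i\setminus B_0}\bbP_{w-e_i}(\tau_{\sB}\le m)\,\D w\ge\tfrac12\lambda(\sB(0,1-\delta_0))=c_2$, and with (a), $Y_i\ge\tfrac{9c_2}{10}$. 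Since $\tfrac{3c_2}{5}<\tfrac{9c_2}{10}$, the conditional law of $Y_i$ puts mass $\ge p_1$ below $\tfrac{3c_2}{5}$ and mass $\ge p_2$ above $\tfrac{9c_2}{10}$, whence $\Var(Y_i\mid\calF_{i-1})\ge\min(p_1,p_2)\bigl(\tfrac{3c_2}{20}\bigr)^2=:v_1>0$ on $G_i$. Therefore $\bbE[D_i^2]\ge v_1\bbP(G_i)\ge v_1/4=:v_0>0$ for all $i_0\le i\le n/2$, with $v_0$ independent of $n$, so $\Var(\calV_n)=\sum_{i=1}^n\bbE[D_i^2]\ge(\floor{n/2}-i_0)v_0$; dividing by $n$ and letting $n\to\infty$ in \cref{eq:VAR} yields $\sigma^2\ge v_0/2>0$.

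The main obstacle is estimate (a): the innovation $D_i$ reveals the endpoint $X_i$, which perturbs the (size $O(n)$) expected future intersection $\int_{\calS_i}\bbP_{z-X_i}(\tau_{\sB}\le n-i)\,\D z$, and one must show this perturbation stays bounded — this needs the gradient bound for hitting probabilities of balls together with the reversal identity, and it is precisely here that $d/\alpha>3/2$ is forced. Adapting the Brownian argument of \cite{Le-Gall} also requires care with the jumps of $\X$ (e.g.\ for $\alpha<2$ the round trips in (b) must be realized through jumps, and $\sigma_i$ need not be connected), which is what makes the adjustment laborious.
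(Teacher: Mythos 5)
Your approach is genuinely different from the paper's, and closer in spirit to Le Gall's original Brownian argument. You decompose $\Var(\calV_n)=\sum_{i=1}^n\bbE[D_i^2]$ via the orthogonal martingale decomposition and then lower-bound each $\bbE[D_i^2]=\bbE[\Var(Y_i\mid\calF_{i-1})]$ directly, by constructing a good event $G_i\in\calF_{i-1}$ of uniformly positive probability on which the conditional law of $Y_i$ visibly spreads out. The paper instead decomposes $\Var(\calV_n)$ in terms of a \emph{different} sequence $Y_k$ involving the infinite-time future $\calS[k,\infty)$ (Step~1), passes to a doubly-infinite stationary process so that $Y_k=Z_{k-1}\circ\vartheta^{k-1}$ with $Z_k\to Z$ in $L^1$ (Step~2a), and proves $\bbE[|Z|]>0$ indirectly, by a contradiction argument which would otherwise force $\Capa(\sB)=0$ (Step~2b). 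Your route would be more direct and constructive if it worked, and the reversal identity, the good-event geometry (including the observation that for $\alpha<2$ the round trip is realized through jumps), and the variance lower bound from the two separated masses are all sound.

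However, there is a genuine gap in your estimate (a). You invoke a ``standard gradient estimate'' $|\nabla_w\bbP_w(\tau_{\sB}\le m)|\lesssim(1+|w|)^{-1}\,\bbP_w(\tau_{\sB}\le m)$ for the \emph{finite-time} hitting probability, uniformly in $m$. This is neither established in the paper nor obviously standard. The paper is careful to work exclusively with the infinite-time hitting probability $\phi(w)=\bbP_w(\tau_{\sB}<\infty)$, precisely because it has an explicit Riesz-kernel representation $\phi(y)=a_{d,\alpha}\int_{\sB}|y-w|^{\alpha-d}(1-|w|^2)^{-\alpha/2}\,\D w$; even then the paper devotes an entire lemma (\Cref{lm:beta_bound}) to a carefully engineered H\"older-type bound, and ultimately uses it only with exponent $\beta=\alpha/2$ rather than the Lipschitz exponent $\beta=1$ that you need. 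For $\bbP_w(\tau_{\sB}\le m)$ no such formula is available (writing $\bbP_w(\tau_{\sB}\le m)=\phi(w)-\bbE_w[\phi(X_m);\tau_{\sB}>m]$ shifts the problem to spatial gradient estimates for the Dirichlet heat kernel of $(-\Delta)^{\alpha/2}$ on $\sB^c$, with constants uniform in $m$), and the small-$m$/large-$|w|$ asymptotics of $\bbP_w(\tau_{\sB}\le m)$ are governed by a single big jump and differ qualitatively from $\phi$. Until a proof of your claimed bound (or a substitute H\"older bound with $\beta=\alpha/2$, uniform in $m$) is supplied, the chain from $|\psi_i(e)-\psi_i(0)|\lesssim|e|(1+Z_i)$ onward is not justified; this is exactly the difficulty the paper's more elaborate Steps~1--2 are designed to sidestep.
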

\begin{proof}  
We split the proof into several steps and we notice that it clearly suffices to restrict our attention to integer values of the parameter $t$.
 \medskip
 
\noindent 
	\textit{Step 1}. We start by finding a handy decomposition of  the variance  $\Var(\calV_n)$ expressed as a sum of specific random variables, see \cref{eq:lim_Var=lim_Y}.
	 We assume that $X_0 = 0$, and  we set
	\begin{equation}\label{eq:def_of_S-hat}
	\widehat{\calS}[s, t] \,=\, \calS[s, t] \setminus \calS_s,\qquad 0 \le s \le t<\infty.
	\end{equation}
	For $n, N \in \bbN$ such that $1 \le n \le N$ we have
		\begin{equation*}
		\calV_n + \lambda(\widehat{\calS}[n, N]) \,=\, \calV_N.
	\end{equation*}
Let $\calF_t = \sigma(X_s : s \le t)$. Since $\calV_n$ is $\calF_n$-measurable, we obtain
	\begin{equation*}
		\calV_n + \bbE[\lambda(\widehat{\calS}[n, N]) \mid \calF_n] \,=\, \bbE[\calV_N \mid \calF_n]
	\end{equation*}
and by taking expectations and subtracting\footnote{For any random variable $Y\in {\rm L}^1$ we write $\langle Y \rangle = Y - \bbE[Y]$.}
	\begin{equation*}
		\langle\calV_n\rangle + \langle\bbE[\lambda(\widehat{\calS}[n, N]) \mid \calF_n]\rangle \,=\, \bbE[\calV_N \mid \calF_n] - \bbE[\calV_N].
	\end{equation*}
	Hence
	\begin{equation}\label{eq:key_relation-1st_iteration}
		\langle\calV_n\rangle + \langle\bbE[\lambda(\widehat{\calS}[n, N]) \mid \calF_n]\rangle \,=\, \sum_{k = 1}^n U_k^N,
	\end{equation} 
	where
	\begin{equation*}
		U_k^N \,=\, \bbE[\calV_N \mid \calF_k] - \bbE[\calV_N \mid \calF_{k - 1}].
	\end{equation*}
We first discuss the second term on the left-hand side of \cref{eq:key_relation-1st_iteration}.
We claim that 
	\begin{equation}\label{eq:key_relation-aux}
	\langle\bbE[\lambda(\widehat{\calS}[n, N]) \mid \calF_n]\rangle \,=\, -\langle\bbE[\lambda(\calS[n, N] \cap \calS_n) \mid \calF_n]\rangle.
	\end{equation}
	 Indeed, by \cref{eq:def_of_S-hat} we have
	\begin{equation*}
		\lambda(\widehat{\calS}[n, N]) \,=\, \lambda(\calS[n, N]) - \lambda(\calS[n, N] \cap \calS_n)
	\end{equation*}
	and the independence of the increments of the process $\X$ implies that
	\begin{align*}
		\bbE[\lambda(\widehat{\calS}[n, N]) \mid \calF_n] &\,=\ \bbE[\lambda(\calS[n, N])|\calF_n] - \bbE[\lambda(\calS[n, N] \cap \calS_n) \mid \calF_n]\\
		&\,=\,\bbE[\lambda(\calS[n, N]-X_n)|\calF_n] - \bbE[\lambda(\calS[n, N] \cap \calS_n) \mid \calF_n]\\
		&\,=\,\bbE[\lambda(\calS[n, N]-X_n)] - \bbE[\lambda(\calS[n, N] \cap \calS_n) \mid \calF_n]
		\\
		&\,=\,\bbE[\lambda(\calS[n, N])] - \bbE[\lambda(\calS[n, N] \cap \calS_n) \mid \calF_n].
	\end{align*}
	Taking expectation  and then subtracting the two relations yields \cref{eq:key_relation-aux}.

	Next we deal with the random variables $U_k^N$ for $k=1,\ldots ,N$. By the independence of the increments of the process $\X$, we obtain
	\begin{align*}
		U_k^N
		&\, =\, \bbE[\lambda(\calS_k) + \lambda(\calS[k, N]) - \lambda(\calS_k \cap \calS[k, N]) \mid \calF_k] \\
		& \ \ \ \ - \bbE[\lambda(\calS_{k - 1}) + \lambda(\calS[k - 1, N - 1]) - \lambda(\calS_{k - 1} \cap \calS[k - 1, N - 1])  \mid \calF_{k - 1}] \\
		&\ \ \ \   -\bbE[ \lambda(\widehat{\calS}[N - 1, N]) \mid \calF_{k - 1}] \\
		& \,=\, \lambda(\calS_k) - \lambda(\calS_{k - 1}) + \bbE[\lambda(\calS[k, N])] - \bbE[\lambda(\calS[k - 1, N - 1])] \\
		&\ \ \ \ - \bbE[\lambda(\widehat{\calS}[N - 1, N]) \mid \calF_{k - 1}]  + \bbE[\lambda(\calS_{k - 1} \cap \calS[k - 1, N - 1]) \mid \calF_{k - 1}] \\ 
		&\ \ \ \  - \bbE[\lambda(\calS_k \cap \calS[k, N]) \mid \calF_k] \\
		& \,=\, \lambda(\widehat{\calS}[k - 1, k]) - \bbE[\lambda(\widehat{\calS}[N - 1, N]) \mid \calF_{k - 1}] + \bbE[\lambda(\calS_{k - 1} \cap \calS[k - 1, N - 1]) \mid \calF_{k - 1}] \\&\ \ \ \ - \bbE[\lambda(\calS_k \cap \calS[k, N]) \mid \calF_k].
	\end{align*}
	Let $\calF_{s, t}$ denote the $\sigma$-algebra generated by the increments of $\X$ on $[s, t]$, $0\le s\le t$. Then, by a  reversibility argument, 
	\begin{equation*}
		\bbE[\lambda(\widehat{\calS}[N - 1, N]) \mid \calF_{k - 1}]\, \overset{({\rm d})}{=}\, \bbE[\lambda(\calS_1 \setminus \calS[1, N]) \mid \calF_{N - k + 1, N}].
	\end{equation*}
	Moreover, the following convergence in ${\rm L}^1$ holds
		\begin{equation}\label{Lemma_technical_1}
		\bbE[\lambda(\calS_1 \setminus \calS[1, N]) \mid \calF_{N - k + 1, N}] \,\xrightarrow[N\nearrow\infty]{{\rm L}^1}\, \bbE[\lambda(\calS_1 \setminus \calS[1, \infty))].
	\end{equation}
	The proof of \cref{Lemma_technical_1} is postponed to \Cref{Sec:Technical}, \Cref{lm:L1-cvg_with_Levi}.  
	In view of \cref{eq:LM2.1A} it follows that
	\begin{equation*}
		\bbE[\lambda(\widehat{\calS}[N - 1, N]) \mid \calF_{k - 1}] \,\xrightarrow[N\nearrow\infty]{{\rm L}^1}\, \Capa(\sB).
	\end{equation*}
We thus obtain
	\begin{equation}
	\begin{aligned}\label{eq:lim_UkN-1st_part}
			 U_k^N
			\xrightarrow[N\nearrow\infty]{{\rm L}^1}\,& \lambda(\widehat{\calS}[k - 1, k]) - \Capa(\sB) + \bbE[\lambda(\calS_{k - 1} \cap \calS[k - 1, \infty)) \mid \calF_{k - 1}] \\
			&- \bbE[\lambda(\calS_k \cap \calS[k, \infty)) \mid \calF_k].
	\end{aligned}
	\end{equation}
Further, we observe that
	\begin{align*}
		\lambda(\calS_k \cap \calS[k, \infty))
		& \,=\, \lambda(\calS_{k - 1} \cap \calS[k - 1, \infty)) - \lambda(\calS_{k - 1} \cap \calS[k - 1, k]) \\
		& \ \ \ \ + \lambda(\calS_k \cap \calS[k, \infty) \cap \calS_{k - 1} \cap \calS[k - 1, k]) + \lambda(\calS[k - 1, k] \cap \calS[k, \infty))\\
		& \ \ \ \  - \lambda(\calS_{k - 1} \cap \calS[k - 1, \infty) \cap \calS[k - 1, k] \cap \calS[k, \infty)) \\
		& \,=\, \lambda(\calS_{k - 1} \cap \calS[k - 1, \infty)) - \lambda(\calS_{k - 1} \cap \calS[k - 1, k])\\
		& \ \ \ \ + \lambda(\calS[k - 1, k] \cap \calS[k, \infty)).
	\end{align*}
This and \cref{eq:lim_UkN-1st_part} imply
	\begin{align*}
		 U_k^N
		 \,\xrightarrow[N\nearrow\infty]{{\rm L}^1}\,&\lambda(\calS[k - 1, k] \cap \calS_{k - 1}^c) - \Capa(\sB) + \bbE[\lambda(\calS_{k - 1} \cap \calS[k - 1, \infty)) \mid \calF_{k - 1}] \\
		&  \ \ \ \ - \bbE[\lambda(\calS_{k - 1} \cap \calS[k - 1, \infty)) \mid \calF_k] + \lambda(\calS[k - 1, k] \cap \calS_{k - 1}) \\
		& \ \ \ \  - \bbE[\lambda(\calS[k - 1, k] \cap \calS[k, \infty)) \mid \calF_k] \\
		& = \lambda(\calS[k - 1, k]) - \bbE[\lambda(\calS[k - 1, k] \cap \calS[k, \infty)) \mid \calF_k] - \Capa(\sB) \\
		& \ \ \ \  + \bbE[\lambda(\calS_{k - 1} \cap \calS[k - 1, \infty)) \mid \calF_{k - 1}] - \bbE[\lambda(\calS_{k - 1} \cap \calS[k - 1, \infty)) \mid \calF_k] \\
		& = \langle\bbE[\lambda(\calS[k - 1, k] \setminus \calS[k, \infty)) \mid \calF_k]\rangle + \bbE[\lambda(\calS[k - 1, k] \setminus \calS[k, \infty))] - \Capa(\sB) \\
		& \ \ \ \  + \bbE[\lambda(\calS_{k - 1} \cap \calS[k - 1, \infty)) \mid \calF_{k - 1}] - \bbE[\lambda(\calS_{k - 1} \cap \calS[k - 1, \infty)) \mid \calF_k] \\
		& = \langle\bbE[\lambda(\calS[k - 1, k] \setminus \calS[k, \infty)) \mid \calF_k]\rangle \\
		& \ \ \ \ + \bbE[\lambda(\calS_{k - 1} \cap \calS[k - 1, \infty)) \mid \calF_{k - 1}] - \bbE[\lambda(\calS_{k - 1} \cap \calS[k - 1, \infty)) \mid \calF_k],
	\end{align*}
	where in the last line we used \cref{eq:LM2.1A}.
	Hence, by \cref{eq:key_relation-1st_iteration,eq:key_relation-aux},
	\begin{equation}\label{eq:key_relation}
		\langle \calV_n \rangle \,=\, \langle\bbE[\lambda(\calS_n \cap \calS[n, \infty)) \mid \calF_n]\rangle + \sum_{k = 1}^n Y_k,
	\end{equation}
	where
	\begin{equation}
		\begin{aligned}\label{al:expr_for_Y_k}
			Y_k
			& \,=\, \bbE[\lambda(\calS_{k - 1} \cap \calS[k - 1, \infty)) \mid \calF_{k - 1}] - \bbE[\lambda(\calS_{k - 1} \cap \calS[k - 1, \infty)) \mid \calF_k] \\
			& \ \ \ \  + \langle\bbE[\lambda(\calS[k - 1, k] \setminus \calS[k, \infty)) \mid \calF_k]\rangle.
		\end{aligned}	
	\end{equation}
	From \cref{eq:key_relation} it follows that the variance of $\calV_{n}$ is equal to
	\begin{equation}
		\begin{aligned}\label{al:Var(V_n)}
			\Var(\calV_n)
			& \,=\, \Var(\bbE[\lambda(\calS_n \cap \calS[n, \infty)) \mid \calF_n]) 
			+ \bbE \Big[\Big(\sum_{k = 1}^n Y_k\Big)^2\Big]\\
			& \ \ \ \ + 2\,\bbE \Big[\langle\bbE[\lambda(\calS_n \cap \calS[n, \infty)) \mid \calF_n]\rangle \sum_{k = 1}^n Y_k\Big].
	\end{aligned}	
	\end{equation}
	Clearly, $Y_k$ is $\calF_k$-measurable and $\bbE[Y_l \mid \calF_k] = 0$ for $k < l$. It follows that
	\begin{align*}
		\bbE \Big[\Big(\sum_{k = 1}^n Y_k\Big)^2\Big]
		\,=\, \sum_{k = 1}^n \bbE[Y_k^2].
	\end{align*}
Jensen's inequality and \cref{eq:k-th_moment_bound} with $d > 3\alpha/2$ yield
	\begin{align*}
	\lim_{n \nearrow \infty} \frac{1}{n} \Var(\bbE[\lambda(\calS_n \cap \calS[n, \infty)) \mid \calF_n])
	& \,\le\, \lim_{n \nearrow \infty} \frac{1}{n} \bbE[\lambda(\calS_n \cap \calS[n, \infty))^2]  \,=\, 0.
	\end{align*}
	The sequence $\{\frac{1}{n} \sum_{k = 1}^n \bbE[Y_k^2]\}_{n \ge1}$ is bounded (the proof is given in \Cref{Sec:Technical}, \Cref{lm:bounded}), and thus by 
the Cauchy-Schwarz inequality we conclude that
	\begin{align*}
	&\lim_{n \nearrow \infty}\frac{1}{n}
	\bbE \Big[\langle\bbE[\lambda(\calS_n \cap \calS[n, \infty)) \mid \calF_n]\rangle \sum_{k = 1}^n Y_k\Big]\,=\,0.
	\end{align*}
We have shown that
	\begin{equation}\label{eq:lim_Var=lim_Y}
		\lim_{n \nearrow \infty} \frac{\Var(\calV_n)}{n} \,=\, \lim_{n \nearrow \infty} \frac{1}{n} \sum_{k = 1}^n \bbE[Y_k^2].
	\end{equation}
	
\noindent	
\textit{Step 2}. In this step we prove that the limit on the right-hand side of \cref{eq:lim_Var=lim_Y} is strictly positive.
Let $\X'$ be an independent copy of the original process $\X$ such that $\X'_0=0$ and it has c\`{a}gl\`{a}d paths. We consider a process $\bar\X=\{\bar X_t\}_{t\in\bbR}$ by setting $\bar X_t=X'_{-t}$ for $t\le0$, and $\bar X_t = X_{t}$ for $t\ge0$. Clearly, the process $\bar\X$ has c\`{a}dl\`{a}g paths, and stationary and independent increments. The sausages $\calS[s, t]$, $\calS(-\infty, s]$ and $\calS[s, \infty)$ corresponding to $\bar \X$ are defined for all $s, t \in \bbR$, $s \le t$. Recall that $\calS_\infty = \calS[0,\infty )$. 
We assume that the process $\bar\X$ is defined on the canonical space $\Omega = \calD(\bbR, \bbR^d)$ of all c\`{a}dl\`{a}g functions $\omega \colon \bbR \to \bbR^d$ as the coordinate process $\bar X_t(\omega) = \omega(t)$. We define a shift operator $\vartheta$ acting on $\Omega$ by
	\begin{equation}\label{eq:def_of_vartheta}
		\vartheta\, \omega(t) \,=\, \omega(1 + t) - \omega(1), \qquad t \in \bbR,
	\end{equation}
and we notice that it is a $\bbP$-preserving mapping. For $t \in \bbR$, we define
	\begin{equation*}
		\calG_t \,=\, \sigma(\bar{X}_s : -\infty < s \le t),
	\end{equation*}
and for $ k\in\N$, 
	\begin{equation}\label{eq:def_of_Zk}
		Z_k \,=\, \bbE[\lambda(\calS[-k, 0] \cap \calS_{\infty}) \mid \calG_0] - \bbE[\lambda(\calS[-k, 0] \cap \calS_{\infty}) \mid \calG_1] + \langle\bbE[\lambda(\calS_1 \setminus \calS[1, \infty)) \mid \calG_1]\rangle.
	\end{equation}
	By \cref{al:expr_for_Y_k,eq:def_of_vartheta} it follows that
	\begin{equation}\label{eq:pres}
		Y_k \,=\, Z_{k - 1} \circ \vartheta^{k - 1}.
	\end{equation}
	In the sequel, we prove  that there exists a random variable $Z$ such that $\bbE[|Z|]>0$ and 
	\begin{equation}\label{eq:Z_k->Z}
		Z_k \,\xrightarrow[k\nearrow\infty]{{\rm L}^1} \,Z.
	\end{equation}
	
\noindent
\textit{Step 2a}. We start by proving the existence of $Z$.	
	This is evident if $d > 2\alpha$ as in this case $h(t) = 1$ and whence using \cref{eq:k-th_moment_bound} we obtain
\begin{equation*}
	\bbE[\lambda(\calS(-\infty, 0] \cap \calS_{\infty}]\, <\, \infty.
\end{equation*}
This implies  \eqref{eq:Z_k->Z} with
\begin{equation}
\begin{aligned}\label{eq:shape_of_Z}
	Z \,=\, &\bbE[\lambda(\calS(-\infty, 0] \cap \calS_{\infty}) \mid \calG_0] - \bbE[\lambda(\calS(-\infty, 0] \cap \calS_{\infty}) \mid \calG_1] \\
	&+ \langle\bbE[\lambda(\calS_1 \setminus \calS[1, \infty)) \mid \calG_1]\rangle.
	\end{aligned}
\end{equation}
We next consider the case $3\alpha /2 < d \le 2\alpha$. Then $\bbE[\lambda(\calS(-\infty, 0] \cap \calS_{\infty}]$ is not finite and we cannot define $Z$ as in the previous case. 
We notice that
\begin{equation*}
	\lambda(\calS[-k, 0] \cap \calS_{\infty}) \,=\, \lambda(\calS[-k, 0] \cap \calS[1, \infty)) + \lambda(\calS[-k, 0] \cap (\calS_1 \setminus \calS[1, \infty))
\end{equation*}
and thus we can rewrite $Z_k$ as follows
\begin{align*}
	Z_k
	& \,=\, \langle\bbE[\lambda(\calS_1 \setminus \calS[1, \infty)) \mid \calG_1]\rangle - \bbE[\lambda((\calS_1 \setminus \calS[1, \infty]) \cap \calS[-k ,0]) \mid \calG_1] \\
	& \ \ \ \  + \bbE[\lambda(\calS[-k, 0] \cap \calS_{\infty}) \mid \calG_0] - \bbE[\lambda(\calS[-k, 0] \cap \calS[1, \infty)) \mid \calG_1].
\end{align*}
Before we let $k$ tend to infinity in the above expression, we rewrite the expression from the second line. We observe that
\begin{equation*}
	\lambda(\calS[-k, 0] \cap \calS_{\infty}) \,=\, \int_{\bbR^d} \bbjedan_{\calS[-k, 0]}(y)\, \bbjedan_{\calS_{\infty}}(y) \,\D y.
\end{equation*}
By taking conditional expectation with respect to $\calG_0$, we obtain
\begin{equation*}
	\bbE[\lambda(\calS[-k, 0] \cap \calS_{\infty}) \mid \calG_0] \,=\, \int_{\bbR^d} \bbjedan_{\calS[-k, 0]}(y)\, \bbE[\bbjedan_{\calS_{\infty}}(y)]\, \D y \,=\, \int_{\bbR^d} \bbjedan_{\calS[-k, 0]}(y)\, \phi(y)\, \D y ,
\end{equation*}
where we set 
$$\phi(y) = \bbP(y \in \calS_{\infty}).$$  
Similarly, we write
\begin{align*}
	\lambda(\calS[-k, 0] \cap \calS[1, \infty))
	& \,=\, \int_{\bbR^d} \bbjedan_{\calS[-k, 0] - X_1}(y)\, \bbjedan_{\calS[1, \infty) - X_1}(y)\, \D y
\end{align*}
and we take conditional expectation with respect to $\calG_1$ which yields
\begin{equation}
	\begin{aligned}\label{al:g(y-X_1)}
		\bbE[\lambda(\calS[-k, 0] \cap \calS[1, \infty)) \mid \calG_1]
		& \,=\, \int_{\bbR^d} \bbjedan_{\calS[-k, 0] - X_1}(y)\, \bbE[\bbjedan_{\calS[1, \infty) - X_1}(y)] \,\D y \\
		& \,=\, \int_{\bbR^d} \bbjedan_{\calS[-k, 0]}(y)\, \phi(y - X_1)\, \D y .
	\end{aligned}
\end{equation}
It follows that
\begin{equation}\label{eq:help_111}
\begin{split}
	&\bbE[\lambda(\calS[-k, 0]
	 \cap \calS_{\infty}) \mid \calG_0] - \bbE[\lambda(\calS[-k, 0] \cap \calS[1, \infty)) \mid \calG_1] \\
	& \,=\, \int_{\bbR^d} \bbjedan_{\calS[-k, 0]}(y)\, \bigl(\phi(y) - \phi(y - X_1)\bigr) \,\D y.
\end{split}
\end{equation}
We prove in \Cref{lm:L1} that the right-hand side integral in \cref{eq:help_111} 
is a well-defined random variable in ${\rm L}^1$. Thus, the  dominated convergence theorem implies \cref{eq:Z_k->Z}
 with
\begin{align*}
	Z
	& \,=\, \langle\bbE[\lambda(\calS_1 \setminus \calS[1, \infty)) \mid \calG_1]\rangle - \bbE[\lambda((\calS_1 \setminus \calS[1, \infty]) \cap \calS(-\infty ,0]) \mid \calG_1] \\
	& \ \ \ \  + \int_{\bbR^d} \bbjedan_{\calS(-\infty, 0]}(y)\, \bigl(\phi(y) - \phi(y - X_1)\bigr)\, \D y.
\end{align*}

\noindent
\textit{Step 2b}. 
We next show that $\bbE[|Z|]>0$ and we remark that the following arguments apply to all $d>3\alpha /2$.
From \cref{eq:key_relation,eq:pres} we have
\begin{equation*}\label{eq:H_n-1st}
	\langle \calV_n \rangle \,=\, \sum_{k = 0}^{n - 1} Z \circ \vartheta^k + H_n,
\end{equation*}
where
\begin{equation}\label{Def:H_n}
	H_n \,=\, \langle \bbE[\lambda(\calS_n \cap \calS[n, \infty)) \mid \calG_n] \rangle + \sum_{k = 0}^{n - 1}(Z_k - Z)\circ \vartheta^k.
\end{equation}
\Cref{eq:LM2.1A} yields
\begin{equation*}
	\langle \calV_n \rangle\, = \,\calV_n - \bbE[\calV_n] \le \calV_n - \bbE[\lambda(\calS_n \setminus \calS[n, \infty))]\, =\, \calV_n - n \Capa(\sB).
\end{equation*}
This implies
\begin{equation*}
	\calV_n \,\ge\, n \Capa(\sB) + \sum_{k = 0}^{n - 1} Z\circ \vartheta^k + H_n.
\end{equation*}
We aim to prove that there is $\bar c>0$ (which does not depend on $n$) such that for all $n\in \N$
\begin{equation}\label{eq:c_bar}
	\bbP\bigl(\{\calV_n \le \bar c\} \cap \{\aps{H_n} \le \bar c\}\bigr) \,>\, 0.
\end{equation}
Notice that if $\bbE[|Z|]=0$ then the inequality  
$\calV_n \ge n \Capa(\sB) + H_n$ 
and \cref{eq:c_bar} would imply that $\Capa(\sB) = 0$ which is a contradiction. Therefore, it is enough to show  \cref{eq:c_bar}.
From \cref{eq:def_of_Zk,eq:shape_of_Z} we obtain
\begin{equation}
\begin{aligned}\label{al:sum(Z-Zk)-1}
	\sum_{k = 0}^{n - 1} (Z - Z_k) \circ \vartheta^k
		& \,=\, \int_{\bbR^d} \bbjedan_{\calS(-\infty, 0]}(y)\, \bbE[\bbjedan_{\calS_{\infty}}(y) \mid \calG_0] \,\D y \\
		& \ \ \ \ - \sum_{k = 0}^{n - 2} \int_{\bbR^d} \Big( \bbjedan_{\calS(-\infty, 0] \setminus \calS_k}(y)\, \bbE[\bbjedan_{\calS[k, \infty)}(y) \mid \calG_{k + 1}] \\
		& \qquad\qquad \qquad \ \  - \bbjedan_{\calS(-\infty, 0] \setminus \calS_{k + 1}}(y)\, \bbE[\bbjedan_{\calS[k + 1, \infty)}(y) \mid \calG_{k + 1}] \Big)\, \D y \\
		& \ \ \ \ - \int_{\bbR^d} \bbjedan_{\calS(-\infty, 0] \setminus \calS_{n - 1}}(y)\, \bbE[\bbjedan_{\calS[n - 1, \infty)}(y) \mid \calG_n] \, \D y .
\end{aligned}
\end{equation}
We next observe that
\begin{equation}
	\begin{aligned}\label{al:sum(Z-Zk)-2}
		&\int_{\bbR^d} \Big(
		 \bbjedan_{\calS(-\infty, 0] \setminus \calS_k}(y)\, \bbE[\bbjedan_{\calS[k, \infty)}(y) \mid \calG_{k + 1}]\\
		 &\hspace{1cm} - \bbjedan_{\calS(-\infty, 0] \setminus \calS_{k + 1}}(y)\, \bbE[\bbjedan_{\calS[k + 1, \infty)}(y) \mid \calG_{k + 1}] \Big)\, \D y \\
		& \,=\, \int_{\bbR^d} \Big( \bbjedan_{\calS(-\infty, 0]}(y) \,\bbjedan_{\calS_k^c}(y)\, \bbE[\bbjedan_{\calS[k, k+1]}(y) + \bbjedan_{\calS[k + 1, \infty)}(y)\, \bbjedan_{\calS[k, k + 1]^c}(y) \mid \calG_{k + 1}] \\
		& \hspace{1.6cm} - \bbjedan_{\calS(-\infty, 0]}(y)\, \bbjedan_{\calS_{k + 1}^c}(y) \,\bbE[\bbjedan_{\calS[k + 1, \infty)}(y) \mid \calG_{k + 1}] \Big)\, \D y \\
		& \,=\, \int_{\bbR^d} \bbjedan_{\calS(-\infty, 0]}(y)\, \Big(\bbjedan_{\calS_k^c}(y)\, \bbjedan_{\calS[k, k + 1]}(y)\\
		& \hspace{3.6cm} + \bbjedan_{\calS_k^c}(y)\, \bbjedan_{\calS[k, k + 1]^c}(y)\, \bbE[\bbjedan_{\calS[k + 1, \infty)}(y) \mid \calG_{k + 1}] \\
		& \hspace{3.6cm} - \bbjedan_{\calS_{k + 1}^c}(y)\, \bbE[\bbjedan_{\calS[k + 1, \infty)}(y) \mid \calG_{k + 1}] \Big)\, \D y \\
		& \,=\, \lambda(\calS(-\infty, 0] \cap (\calS[k, k + 1] \setminus \calS_k)) .
	\end{aligned}
\end{equation}
We clearly have $\bbjedan_{\calS[n - 1, \infty)}(y) = \bbjedan_{\calS[n, \infty)}(y) + \bbjedan_{\calS[n - 1, n] \setminus \calS[n, \infty)}(y)$. This identity and a similar argument as in \cref{al:g(y-X_1)} yield
\begin{equation}
	\begin{aligned}\label{al:sum(Z-Zk)-3}
		&\int_{\bbR^d}
		 \bbjedan_{\calS(-\infty, 0] \setminus \calS_{n - 1}}(y)\, \bbE[\bbjedan_{\calS[n - 1, \infty)}(y) \mid \calG_n] \,\D y \\
		& \,=\, \int_{\bbR^d} \bbjedan_{\calS(-\infty, 0] \setminus \calS_{n - 1}}(y)\, \bbE[\bbjedan_{\calS[n, \infty)}(y) \mid \calG_n]\, \D y\\
		& \ \ \ \ + \int_{\bbR^d} \bbjedan_{\calS(-\infty, 0] \setminus \calS_{n - 1}}(y)\, \bbE[\bbjedan_{\calS[n - 1, n] \setminus \calS[n, \infty)}(y) \mid \calG_n]\, \D y \\
		&\, =\, \int_{\bbR^d} \bbjedan_{\calS(-\infty, 0]}(y)\, \phi(y - X_n)\, \D y - \int_{\bbR^d} \bbjedan_{\calS(-\infty, 0] \cap \calS_{n-1}}(y)\, \phi(y - X_n)\, \D y \\
		& \ \ \ \ + \int_{\bbR^d} \bbjedan_{\calS(-\infty, 0] \setminus \calS_{n - 1}}(y)\, \bbE[\bbjedan_{\calS[n - 1, n] \setminus \calS[n, \infty)}(y) \mid \calG_n]\, \D y.
	\end{aligned}
\end{equation}
By combining \cref{al:sum(Z-Zk)-1,al:sum(Z-Zk)-2,al:sum(Z-Zk)-3}, we arrive at
\begin{equation}
	\begin{aligned}\label{al:sum(Z-Zk)}
		&\sum_{k = 0}^{n - 1} (Z - Z_k) \circ \vartheta^k\\
		& \,=\, \int_{\bbR^d} \bbjedan_{\calS(-\infty, 0]}(y)\, \bigl(\phi(y) - \phi(y - X_n)\bigr)\, \D y  + \int_{\bbR^d} \bbjedan_{\calS(-\infty, 0] \cap \calS_{n-1}}(y)\, \phi(y - X_n)\,\D y \\
		& \ \ \ \  - \int_{\bbR^d} \bbjedan_{\calS(-\infty, 0] \setminus \calS_{n - 1}}(y)\, \bbE[\bbjedan_{\calS[n - 1, n] \setminus \calS[n, \infty)}(y) \mid \calG_n]\, \D y  - \lambda(\calS(-\infty, 0] \cap \calS_{n - 1}).
	\end{aligned}
\end{equation}
We claim that there is a constant $ \tilde c > 0$ such that for all $n\in \N$
\begin{equation}\label{toShow_222}
	\bbP \OBL{\VIT{\sup_{0\le s \le n} \aps{X_s} \le 1} \cap \VIT{ \APS{\int_{\bbR^d}\!\!\!\! \bbjedan_{\calS(-\infty, 0]}(y)\, \bigl(\phi(y) - \phi(y - X_n)\bigr) \,\D y\,} \le \tilde c+1}} > 0.
\end{equation}
If $\sup_{0\le s\le n}|X_n|\le 1$, then clearly $\lambda(\calS_n)\le \lambda(\sB(0,2))$ and this allows us to estimate the first term on the right-hand side of \cref{Def:H_n} and, similarly, the three last terms on the right-hand side of \cref{al:sum(Z-Zk)} by a constant. 
We infer that  there exists a constant $\bar c > 0$ such that 
\begin{align*}
	\VIT{\sup_{0\le s \le n} \aps{X_s} \le 1} &
	 \cap \VIT{ \APS{\int_{\bbR^d} \bbjedan_{\calS(-\infty, 0]}(y)\, \bigl(\phi(y) - \phi(y - X_n)\bigr)\,\D y\,} \le \tilde c +1}\\
	 &\, \subseteq\, \{\calV_n \le \bar c\} \cap \{H_n \le \bar c\}.
\end{align*}
To finish the proof of \cref{eq:c_bar},
we are only left to show \cref{toShow_222}. In view of the Markov inequality it is enough to prove that under $\sup_{0\le s\le n}|X_n|\le 1$ we have
\begin{align*}
\bbE \Big[  \Big|\int_{\bbR^d} \bbjedan_{\calS(-\infty, 0]}(y)\, \bigl(\phi(y) - \phi(y - X_n)\bigr) \,\D y\,\Big|  \Big] \,<\,\infty.
\end{align*}
This holds as, under $\sup_{0\le s\le n}|X_n|\le 1$,
\begin{align*}
\bbE \Big[  \Big|\int_{\bbR^d} \bbjedan_{\calS(-\infty, 0]}(y)\, \bigl(\phi(y) - \phi(y - X_n)\bigr) \,\D y\,\Big|  \Big] \le 
	\sup_{x\in \sB}\int_{\bbR^d}\phi(y)|\phi(y)-\phi(y-x)|\,\D y\,<\,\infty,
\end{align*}
where convergence of the last integral is established in  \Cref{lm:bound}.
\medskip

\noindent
\textit{Step 2c}. 	We finally show that the limit in \cref{eq:lim_Var=lim_Y} is positive.  \Cref{eq:Z_k->Z}
implies
	\begin{equation*}
		\lim_{n \nearrow \infty}\frac{1}{n} \sum_{k = 0}^{n - 1} \bbE[\aps{Z_k}]\, =\, \bbE[\aps{Z}].
	\end{equation*}
Hence, by Jensen's inequality, 
	\begin{align*}
		\lim_{n \nearrow \infty}\frac{1}{n}\sum_{k = 0}^{n - 1}\bbE[Z_k^2]
		& \,\ge\, \lim_{n \nearrow \infty}
		\bbE\Big[\Big(\frac{1}{n}\sum_{k = 0}^{n - 1}\aps{Z_k}\Big)^2\Big] \,\ge\,\lim_{n \nearrow \infty} \Big(\bbE \Big[ \frac{1}{n} \sum_{k = 0}^{n - 1} \aps{Z_k}\Big]\Big)^2 \\
		&\,=\, \lim_{n \nearrow \infty}\Big(\frac{1}{n} \sum_{k = 0}^{n - 1} \bbE[\aps{Z_k}]\Big)^2\,=\, (\bbE[\aps{Z}])^2.
	\end{align*}
By \cref{eq:pres}, we have
	\begin{equation*}
		\lim_{n \nearrow \infty} \frac{1}{n} \sum_{k = 1}^n \bbE[Y_k^2] \,=\, \lim_{n \nearrow \infty} \frac{1}{n} \sum_{k = 0}^{n - 1} \bbE[Z_k^2] \,\ge\, (\bbE[\aps{Z}])^2 \,>\, 0,
	\end{equation*}
and this finishes the proof of the lemma.
\end{proof}

\subsection{Proof of the central limit theorem}

In this paragraph, we prove \Cref{tm:CLT}. In the proof we apply the Lindeberg-Feller central limit theorem which we include for completeness. 

\begin{lemma}[{\cite[Theorem 3.4.5]{Durrett}}]\label{lm:Durrett}
	For each $n \in\N$ let $\{X_{n, i}\}_{1 \le i \le n}$ be a sequence of independent random variables with zero mean. If	the following conditions are satisfied
		\begin{itemize}
			\item [(i)] $\displaystyle\lim_{n\nearrow\infty}\sum_{i = 1}^n \bbE[X_{n,i}^2] = \sigma^2 > 0$,	and
			\item [(ii)] for every $\varepsilon > 0$, $\displaystyle\lim_{n\nearrow\infty}\sum_{i = 1}^n \bbE\left[X_{n,i}^2 \bbjedan_{\{|X_{n,i}| > \varepsilon\}} \right] =0,$
	\end{itemize}
	then $X_{n,1} + \cdots +  X_{n,n} \xrightarrow[n\nearrow\infty]{\text{{\rm (d)}}} \sigma\,\calN(0,1).$
\end{lemma}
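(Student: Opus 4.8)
The plan is to prove the statement by the classical characteristic-function argument. Write $\varphi_{n,i}(t)=\bbE[\E^{itX_{n,i}}]$ and $\sigma_{n,i}^2=\bbE[X_{n,i}^2]$, so that $\sum_{i=1}^n\sigma_{n,i}^2\to\sigma^2$ by hypothesis (i). By independence within each row, the characteristic function of $X_{n,1}+\cdots+X_{n,n}$ is $\prod_{i=1}^n\varphi_{n,i}(t)$, and by L\'evy's continuity theorem it suffices to show that $\prod_{i=1}^n\varphi_{n,i}(t)\to\E^{-\sigma^2t^2/2}$ for every $t\in\bbR$, since $\E^{-\sigma^2t^2/2}$ is the characteristic function of $\sigma\,\calN(0,1)$.

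The first step is to extract uniform asymptotic negligibility of the array from (ii): for any $\varepsilon>0$ and any $i$, splitting according to $\{\aps{X_{n,i}}\le\varepsilon\}$ and its complement gives $\sigma_{n,i}^2\le\varepsilon^2+\bbE[X_{n,i}^2\bbjedan_{\{\aps{X_{n,i}}>\varepsilon\}}]\le\varepsilon^2+\sum_{j=1}^n\bbE[X_{n,j}^2\bbjedan_{\{\aps{X_{n,j}}>\varepsilon\}}]$, whence $\limsup_{n\to\infty}\max_{1\le i\le n}\sigma_{n,i}^2\le\varepsilon^2$, and letting $\varepsilon\downto0$ yields $\max_{1\le i\le n}\sigma_{n,i}^2\to0$. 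In particular, for every fixed $t$ and all $n$ large enough, $0\le1-\tfrac{t^2}{2}\sigma_{n,i}^2\le1$ for all $i$.

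Next I compare $\varphi_{n,i}(t)$ with $1-\tfrac{t^2}{2}\sigma_{n,i}^2$. Using the elementary bound $\aps{\E^{ix}-1-ix+x^2/2}\le\min\{\aps{x}^3/6,\,x^2\}$ together with $\bbE[X_{n,i}]=0$,
\begin{align*}
\APS{\varphi_{n,i}(t)-\bigl(1-\tfrac{t^2}{2}\sigma_{n,i}^2\bigr)}
&= \APS{\bbE\bigl[\E^{itX_{n,i}}-1-itX_{n,i}+\tfrac{t^2}{2}X_{n,i}^2\bigr]} \\
&\le \bbE\bigl[\min\bigl\{\tfrac{\aps{t}^3}{6}\aps{X_{n,i}}^3,\;t^2X_{n,i}^2\bigr\}\bigr].
\end{align*}
Bounding the minimum by $\tfrac{\aps{t}^3}{6}\varepsilon\,X_{n,i}^2$ on $\{\aps{X_{n,i}}\le\varepsilon\}$ and by $t^2X_{n,i}^2$ on the complement, and summing over $i$, one obtains $\sum_{i=1}^n\APS{\varphi_{n,i}(t)-(1-\tfrac{t^2}{2}\sigma_{n,i}^2)}\le\tfrac{\aps{t}^3\varepsilon}{6}\sum_{i=1}^n\sigma_{n,i}^2+t^2\sum_{i=1}^n\bbE[X_{n,i}^2\bbjedan_{\{\aps{X_{n,i}}>\varepsilon\}}]$. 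Letting $n\to\infty$ (using (i) and (ii)) and then $\varepsilon\downto0$ shows this sum tends to $0$. Since $\aps{\varphi_{n,i}(t)}\le1$ and, for $n$ large, $0\le1-\tfrac{t^2}{2}\sigma_{n,i}^2\le1$, the product-comparison inequality $\aps{\prod_iz_i-\prod_iw_i}\le\sum_i\aps{z_i-w_i}$ (valid when all $\aps{z_i},\aps{w_i}\le1$) gives $\prod_{i=1}^n\varphi_{n,i}(t)-\prod_{i=1}^n(1-\tfrac{t^2}{2}\sigma_{n,i}^2)\to0$. Finally, applying the same inequality with $w_i=\E^{-t^2\sigma_{n,i}^2/2}$ and using $0\le\E^{-x}-(1-x)\le x^2$ for $x\ge0$ yields $\APS{\prod_{i=1}^n(1-\tfrac{t^2}{2}\sigma_{n,i}^2)-\exp(-\tfrac{t^2}{2}\sum_{i=1}^n\sigma_{n,i}^2)}\le\tfrac{t^4}{4}\sum_{i=1}^n\sigma_{n,i}^4\le\tfrac{t^4}{4}\bigl(\max_i\sigma_{n,i}^2\bigr)\sum_{i=1}^n\sigma_{n,i}^2\to0$, while $\exp(-\tfrac{t^2}{2}\sum_i\sigma_{n,i}^2)\to\E^{-\sigma^2t^2/2}$ by (i). Combining the three limits proves $\prod_{i=1}^n\varphi_{n,i}(t)\to\E^{-\sigma^2t^2/2}$.

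The argument is entirely classical, so there is no genuinely hard step; the two points demanding a little care are deducing uniform asymptotic negligibility from the Lindeberg condition (ii) alone, and organizing the split of the Taylor remainder so that the error decomposes cleanly into an $\varepsilon$-controlled piece and a piece that is exactly the Lindeberg sum in (ii).
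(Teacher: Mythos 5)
Your proof is correct, and it is the standard characteristic-function argument for the Lindeberg--Feller theorem; the paper itself gives no proof, citing Durrett's Theorem 3.4.5 directly, and Durrett's proof is essentially the one you reproduce. The two delicate points you flag -- deriving uniform asymptotic negligibility $\max_i \sigma_{n,i}^2 \to 0$ from the Lindeberg condition alone, and the $\varepsilon$-split of the Taylor remainder -- are handled correctly, and the two applications of the product-comparison inequality $\aps{\prod z_i - \prod w_i}\le\sum\aps{z_i-w_i}$ for $\aps{z_i},\aps{w_i}\le1$ are both legitimately justified for $n$ large (so that $0\le 1-\tfrac{t^2}{2}\sigma_{n,i}^2\le1$).
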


\begin{proof}[Proof of \Cref{tm:CLT}]
	For $t>0$ large enough we choose $n = n(t) = \floor{\log(t)}$. We have
	\begin{equation*}
		\calV_t \,=\, \lambda(\calS_t) \,=\, \lambda\bigl(\calS_{t/n} \cup \calS[t/n, t]\bigr) \,=\, \lambda\bigl((\calS_{t/n} - X_{t/n}) \cup (\calS[t/n, t] - X_{t/n})\bigr).
	\end{equation*}
	By the Markov property,
	\begin{equation*}
		\calS^{(1)}_{t/n} \,= \, \calS_{t/n} - X_{t/n} \qquad \textnormal{and} \qquad \calS^{(2)}_{(n - 1)t/n} \,=\, \calS[t/n, t] - X_{t/n}
	\end{equation*}
	are independent, and $\calS^{(2)}_{(n - 1)t/n}$ has the same law as $\calS_{(n - 1)t/n}$. Rotational invariance of  $\X$ implies that $\calS^{(1)}_{t/n}$ is equal in law to $\calS_{t/n}$. Hence,
	\begin{equation*}
		\calV_t \,=\, \lambda(\calS^{(1)}_{t/n}) + \lambda(\calS^{(2)}_{(n - 1)t/n}) - \lambda\bigl(\calS^{(1)}_{t/n} \cap \calS^{(2)}_{(n - 1)t/n}\bigr).
	\end{equation*}
	By iterating this procedure, we obtain 
	\begin{equation}\label{eq:decom_of_Vt}
		\calV_t \,=\, \sum_{i = 1}^n \lambda(\calS^{(i)}_{t/n}) - \sum_{i = 1}^{n - 1}\lambda\bigl(\calS^{(i)}_{t/n} \cap \calS^{(i + 1)}_{(n - i)t/n}\bigr).
	\end{equation}
We denote
	\begin{equation*}
		\calV^{(i)}_{t/n} \,=\, \lambda(\calS^{(i)}_{t/n}) \qquad \textnormal{and} \qquad \calR(t) \,=\, \sum_{i = 1}^{n - 1}\lambda\bigl(\calS^{(i)}_{t/n} \cap \calS^{(i + 1)}_{(n - i)t/n}\bigr),
	\end{equation*}
	and we notice that $\{\calV^{(i)}_{t/n}\}_{1 \le t \le n}$ are i.i.d.\ random variables. 
	By taking expectation in \cref{eq:decom_of_Vt} and then subtracting, we obtain
	\begin{equation}\label{eq:rel_for_<Vt>}
		\langle \calV_t \rangle \,=\, \sum_{i = 1}^n \langle \calV^{(i)}_{t/n} \rangle - \langle \calR(t) \rangle.
	\end{equation}
	We first show that
	\begin{equation}\label{eq:error_term->0}
		\frac{\langle \calR(t) \rangle}{\sqrt{t}}\, \xrightarrow[t\nearrow\infty]{{\rm L}^1}\, 0.
	\end{equation}
	Since $\calR(t) \ge 0$, we clearly have $\bbE[\aps{\langle \calR(t) \rangle}] \le 2\,\bbE[\calR(t)]$. By \Cref{Cor:moment_bounds},
	\begin{equation*}
		\bbE[\calR(t)] \, \le\, \sum_{i = 1}^{n - 1} \bbE\left[\lambda\bigl(\calS^{(i)}_{t/n} \cap \calS^{(i + 1)}_{\infty}\bigr)\right]
		\, \le\, c\, n\, h(t/n),
	\end{equation*}
	for all $t>0$ large enough. Hence, \cref{eq:error_term->0} follows by
\cref{eq:def_of_hd}, and the fact that $n = \floor{\log(t)}$ and $d/\alpha>3 /2$.
Next we prove that
\begin{equation}\label{eq:sum->N}
	\frac{1}{\sqrt{t}} \sum_{i = 1}^n \langle \calV^{(i)}_{t/n} \rangle \,\xrightarrow[t\nearrow\infty]{\text{(d)}}\, \sigma\,\calN(0,1).
\end{equation}
For this we introduce the random variables
\begin{equation*}
	X_{n, i} \,=\, \frac{\langle \calV^{(i)}_{t/n} \rangle}{\sqrt{t}},\qquad i=1,\dots,n,
\end{equation*}
and we check the validity of conditions (i) and (ii) from \Cref{lm:Durrett}. 
Condition (i) follows by \Cref{lm:positivity}, 
\begin{equation}\label{eq:Durret-(i)}
	\lim_{n \nearrow \infty}\sum_{i = 1}^n \bbE[X_{n,i}^2] 
	\,=\, \lim_{n \nearrow \infty}\frac{n}{t} \Var(\calV_{t/n}) 
	\, = \, \sigma^2.
\end{equation} 
To establish condition (ii) we apply the Cauchy-Schwartz inequality and obtain that for every $\varepsilon > 0$,
\begin{equation*}
	\bbE\left[X_{n,i}^2 \,\bbjedan_{\{|X_{n,i}| > \varepsilon\}} \right]
\, \le \, \frac{1}{t} \OBL{\bbE\UGL{\langle \calV_{t/n} \rangle^4} \bbP\OBL{\aps{\langle \calV_{t/n} \rangle} > \varepsilon \sqrt{t}}}^{1/2}.
\end{equation*}
By Chebyshev's inequality combined with \Cref{lm:positivity} and the fact that $n = \floor{\log(t)}$, there is a constant $c_1>0$ such that
\begin{equation*}
	\bbP\OBL{\aps{\langle \calV_{t/n} \rangle} > \varepsilon \sqrt{t}}
\,\le\, \frac{\Var(\calV_{t/n})}{\varepsilon^2 t}	
	\, \le\, \frac{c_1 t/n}{\varepsilon^2 t} \,=\, \frac{c_1}{\varepsilon^2 n}.
\end{equation*} 
This together with \Cref{lm:4-th-moment} imply that there are constants $c_2, c_3>0$ such that
\begin{equation}\label{eq:Durret-(ii)}
	\lim_{n \nearrow \infty}\sum_{i = 1}^n \bbE\left[X_{n,i}^2 \,\bbjedan_{\{|X_{n,i}| > \varepsilon\}} \right] 
	\,\le\, \lim_{n \nearrow \infty}\sum_{i = 1}^n \frac{1}{t} \Big(c_2 \Big(\frac{t}{n}\Big)^2 \frac{c_1}{\varepsilon^2n}\Big)^{1/2} 
	\,\le\, \lim_{n \nearrow \infty}\frac{c_3}{\sqrt{n}} \, =\,0.
\end{equation} 
Thus, \cref{eq:sum->N} follows and we conclude that
\begin{equation*}
	\frac{\langle \calV_t \rangle}{\sqrt{t}}\, \xrightarrow[t\nearrow\infty]{\text{(d)}}\, \sigma\,\calN(0,1).
\end{equation*}
We finally observe that 
\begin{align*}
	\frac{\calV_t - t\Capa(\sB)}{\sigma \sqrt{t}} 
	\,=\, \frac{\langle \calV_t \rangle}{\sigma \sqrt{t}} + \frac{\bbE[\calV_t] - t\Capa(\sB)}{\sigma \sqrt{t}}, 
\end{align*}
which allows us to finish the proof in view of \Cref{lm:obtaining_tC(B)} and \Cref{Cor:moment_bounds}.
\end{proof}

\section{Functional central limit theorem}\label{Sec:FCLT}

The goal of this section is to prove the functional central  limit theorem in \cref{Result:FCLT}. To prove this statement we adapt the proof of \cite[Theorem 1.1]{CSS19_2},
 which is concerned with the functional central limit theorem for the capacity of the range of a stable random walk.

We again assume  that $\X$ is a stable rotationally invariant L\'{e}vy process in $\bbR^d$ of index $\alpha \in (0,2]$ satisfying $d/\alpha> 3/2$.
We follow the classical two-step scheme (see \cite[Theorem 16.10 and Theorem 16.11]{Kallenberg}). Let $\{Y^{n}\}_{n \ge 1}$ be a sequence of random elements in the Skorohod space $\calD([0, \infty), \bbR)$ endowed with the Skorohod $\text{J}_1$ topology. The sequence $\{Y^{n}\}_{n \ge 1}$ converges weakly to a random element $Y$ (in $\calD([0, \infty), \bbR)$) if the following two conditions are satisfied: 
\begin{enumerate}[(i)]
	\item The finite dimensional distributions of $\{Y^{n}\}_{n \ge 1}$ converge weakly to the finite dimensional distributions of $Y$.
	\item For any bounded sequence $\{T_n\}_{n \ge1}$ of $\{Y^{n}\}_{n \ge 1}$-stopping times and any non-negative sequence $\{b_n\}_{n \ge 1}$  converging to zero,
		\begin{equation*}
		\lim_{n \nearrow \infty}\bbP \bigl(|Y_{T_n + b_n}^{n} - Y_{T_n}^{n}| \ge \varepsilon\bigr)\, =\, 0,\qquad  \varepsilon >0.
	\end{equation*}
\end{enumerate}

\begin{theorem}\label{tm:FCLT} 
Under the above assumptions, the following convergence holds
	\begin{equation*}
		\VIT{\frac{\calV_{nt} - nt\Capa(\sB)}{\sigma \sqrt{n}}}_{t \ge 0}\, \xrightarrow[n\nearrow\infty]{({\rm J}_1)}\, \{W_t\}_{t \ge 0},
	\end{equation*}
	where $\sigma$ is the constant from  \Cref{tm:CLT}.
\end{theorem}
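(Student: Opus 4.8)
The plan is to follow the classical two-step scheme stated just above the theorem: establish (i) convergence of finite-dimensional distributions and (ii) tightness via the Aldous criterion. For (i), the key observation is that the process $t \mapsto \calV_{nt} - nt\Capa(\sB)$, suitably rescaled by $\sigma\sqrt{n}$, should converge to Brownian motion. Fix $0 = t_0 < t_1 < \cdots < t_m$. Using the decomposition \cref{eq:VOL} repeatedly, write
\begin{equation*}
	\calV_{nt_j} \,=\, \calV_{nt_{j-1}} + \calV[nt_{j-1}, nt_j] - \lambda\bigl(\calS_{nt_{j-1}} \cap \calS[nt_{j-1}, nt_j]\bigr),
\end{equation*}
so that the increments $\calV_{nt_j} - \calV_{nt_{j-1}}$ equal $\calV[nt_{j-1}, nt_j] - \lambda(\calS_{nt_{j-1}} \cap \calS[nt_{j-1}, nt_j])$, which after subtracting the mean and dividing by $\sigma\sqrt{n}$ splits into an i.i.d.-over-$j$ main part (by the independence of increments of $\X$, $\calV[nt_{j-1}, nt_j]$ has the same law as $\calV_{n(t_j - t_{j-1})}$) plus an error term. \Cref{tm:CLT} applied to each block gives $(\calV_{n(t_j-t_{j-1})} - n(t_j - t_{j-1})\Capa(\sB))/(\sigma\sqrt{n}) \to \sqrt{t_j - t_{j-1}}\,\calN(0,1)$, and \Cref{Cor:moment_bounds} shows the intersection error terms $\lambda(\calS_{nt_{j-1}} \cap \calS[nt_{j-1}, nt_j])/\sqrt{n}$ vanish in ${\rm L}^1$ (here $h(n)/\sqrt{n}\to0$ because $d/\alpha > 3/2$). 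By the independence of the blocks and the continuous mapping theorem, the joint law of the rescaled increments converges to independent centered Gaussians with variances $t_j - t_{j-1}$, which are exactly the finite-dimensional distributions of $\{W_t\}$.

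For (ii), I would verify the Aldous-type condition displayed before the theorem. Let $\{T_n\}$ be a bounded sequence of stopping times (for the natural filtration of the rescaled process) and $b_n \downarrow 0$. Write $Y^n_t = (\calV_{nt} - nt\Capa(\sB))/(\sigma\sqrt{n})$. Then
\begin{equation*}
	Y^n_{T_n + b_n} - Y^n_{T_n} \,=\, \frac{\calV_{n(T_n + b_n)} - \calV_{nT_n}}{\sigma\sqrt{n}} - \frac{b_n n \Capa(\sB)}{\sigma\sqrt{n}},
\end{equation*}
and the second term tends to $0$. Using subadditivity, $\calV_{n(T_n+b_n)} - \calV_{nT_n} \le \calV[nT_n, n(T_n+b_n)]$, and by the strong Markov property at time $nT_n$ this quantity is stochastically dominated by $\calV_{nb_n}$. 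Hence it suffices to bound $\bbP(\calV_{nb_n}/\sqrt{n} \ge \varepsilon)$; since $\bbE[\calV_{nb_n}] \le nb_n\Capa(\sB) + c\,h(nb_n)$ by \cref{eq:SLLN} and \Cref{Cor:moment_bounds}, and $nb_n/\sqrt{n} = b_n\sqrt{n}$ need not be small, one instead controls the centered quantity: $\Var(\calV_{nb_n}) \le \sigma^2 nb_n (1 + o(1))$ from \cref{eq:VAR}, so $\bbP(\aps{\langle\calV_{nb_n}\rangle}/\sqrt{n} \ge \varepsilon/2) \le 4\sigma^2 b_n/\varepsilon^2 \to 0$, while the mean contribution $nb_n\Capa(\sB)/\sqrt{n}$ exactly cancels against the drift term $b_n n\Capa(\sB)/(\sigma\sqrt{n})$ in $Y^n_{T_n+b_n} - Y^n_{T_n}$ once one is careful to compare $\calV[nT_n, n(T_n+b_n)]$ with its mean $nb_n\Capa(\sB)$ rather than with $\calV_{nb_n}$ directly. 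This yields the Aldous estimate.

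I expect the main obstacle to be step (ii), specifically making the stopping-time argument rigorous: one must handle the fact that $\calV[nT_n, n(T_n+b_n)] - \calV_{nT_n}$ is not simply an increment of a process with independent increments, so one cannot directly invoke a maximal inequality; the correct route is to use the strong Markov property at $nT_n$ together with subadditivity $\calV_{n(T_n+b_n)} \le \calV_{nT_n} + \calV[nT_n, n(T_n+b_n)]$ and the reverse bound $\calV_{n(T_n+b_n)} \ge \calV_{nT_n}$, which sandwiches the increment between $0$ and a quantity equal in law (conditionally) to $\calV_{nb_n} - \bbE[\calV_{nb_n}] + \bbE[\calV_{nb_n}]$, and then split into a variance term that vanishes and a mean term that is matched by the deterministic compensator. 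A secondary technical point is verifying that the blocks in the finite-dimensional argument are genuinely independent and that the error terms, summed over the finitely many blocks $j = 1, \ldots, m$, still converge to zero in ${\rm L}^1$ — this is immediate since $m$ is fixed. Apart from these points, the proof is a routine combination of \Cref{tm:CLT}, \Cref{Cor:moment_bounds}, \cref{eq:VAR}, and the standard criterion for ${\rm J}_1$-convergence in $\calD([0,\infty), \bbR)$.
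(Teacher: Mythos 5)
Your finite-dimensional argument is essentially the paper's: decompose $\calV_{nt_j}$ into independent blocks via the Markov property, apply \Cref{tm:CLT} to each block, and use \Cref{Cor:moment_bounds} to kill the intersection corrections; the paper routes this through the Cram\'{e}r--Wold device, but your ``independence of blocks $+$ continuous mapping'' wording is the same argument. Two cosmetic remarks there: the blocks are independent but not identically distributed unless the $t_j$ are equally spaced, and $\calV[nt_{j-1},nt_j]$ is \emph{equal in law} to $\calV_{n(t_j-t_{j-1})}$ (not merely dominated by it).

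The tightness part has a genuine gap. Writing $D_n = \calV_{n(T_n+b_n)} - \calV_{nT_n}$ and $U_n = \calV[nT_n, n(T_n+b_n)]$, you use the sandwich $0 \le D_n \le U_n$ and argue that $(U_n - nb_n\Capa(\sB))/\sqrt{n} \to 0$ by splitting $U_n$ into its fluctuation and mean. But the sandwich gives no \emph{lower} bound on $D_n - nb_n\Capa(\sB)$: from $D_n \ge 0$ alone you only get $(D_n - nb_n\Capa(\sB))/\sqrt{n} \ge -b_n\sqrt{n}\,\Capa(\sB)$, and $b_n\sqrt{n}$ need not tend to zero (take $b_n = 1/\log n$). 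The point is that $D_n = U_n - I_n$ exactly, with $I_n = \lambda(\calS_{nT_n} \cap \calS[nT_n, n(T_n+b_n)])$, and the sandwich merely restates $0 \le I_n \le U_n$ — it does not show $I_n/\sqrt{n} \to 0$. You must control $I_n$ separately. The paper does so by using that $\{T_n\}$ is bounded, say $\sup_n T_n \le c_2$, whence
\begin{equation*}
	I_n \,\le\, \lambda\bigl(\calS^{(1)}_{c_2 n} \cap \calS^{(2)}_{\infty}\bigr),
\end{equation*}
and then applies \Cref{Cor:moment_bounds} and Markov's inequality to get $\bbE[I_n] \le c\,h(c_2 n)$, so $I_n/\sqrt{n} \to 0$ in probability because $h(c_2 n)/\sqrt{n} \to 0$ for $d/\alpha > 3/2$. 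This is the missing ingredient; once it is added, the three-term decomposition $D_n - nb_n\Capa(\sB) = \langle \calV_{nb_n}\rangle + (\bbE[\calV_{nb_n}] - nb_n\Capa(\sB)) - I_n$ (using \Cref{lm:obtaining_tC(B)} for the middle term) closes the argument exactly as in the paper.
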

\begin{proof}
We consider the following sequence of random elements which are defined in the space $\calD([0, \infty), \bbR)$,
\begin{equation}\label{eq:def_of_Y^n_t}
	Y_t^{n} \,=\,\frac{\calV_{nt} - nt\Capa(\sB)}{\sigma \sqrt{n}},\qquad  n\in\N, 
\end{equation}
where $\sigma$ is the constant from \Cref{tm:CLT}. Let us start by showing condition (i).
\medskip

\noindent
\textit{Condition (i)}.
By \Cref{tm:CLT}, we have
\begin{equation*}
	Y^n_t \,=\, \frac{\calV_{nt} - nt\Capa(\sB)}{\sigma \sqrt{n}} \,=\,\sqrt{t}\, \frac{\calV_{nt} - nt\Capa(\sB)}{\sigma \sqrt{nt}} \, \xrightarrow[n\nearrow\infty]{(\text{d})}\, \calN(0, t).
\end{equation*}
Let $k \in\N$ be arbitrary  and choose $0 = t_0 < t_1 < t_2 < \cdots < t_k $. We need to prove that
\begin{equation*}
	(Y_{t_1}^{n}, Y_{t_2}^{n}, \ldots, Y_{t_k}^{n})\, \xrightarrow[n\nearrow\infty]{(\text{d})}\, (W_{t_1}, W_{t_2}, \ldots, W_{t_k}).
\end{equation*}
 In view of the Cram\'{e}r-Wold theorem \cite[Corollary 5.5]{Kallenberg} it suffices to show that
\begin{equation*}\label{eq:cvg_CW}
	\sum_{j = 1}^k \xi_j Y_{t_j}^{n}\, \xrightarrow[n\nearrow\infty]{(\text{d})}\, \sum_{j = 1}^k \xi_j W_{t_j}, \qquad  (\xi_1, \xi_2, \ldots, \xi_k) \in \bbR^k.
\end{equation*}
Using a similar reasoning as in the beginning of the proof of \Cref{tm:CLT}, we obtain for $j \in \{1, 2, \ldots, k\}$,
\begin{equation*}
	\calV_{nt_j}\, =\, \sum_{i = 1}^j \calV^{(i)}_{n(t_i - t_{i - 1})} - \sum_{i = 1}^{j - 1} \calR^{(i)}_{nt_j},
\end{equation*}
where 
\begin{equation*}
	\calV^{(i)}_{n(t_i - t_{i - 1})} \,=\, \lambda(\calS^{(i)}_{n(t_i - t_{i - 1})}) \qquad \textnormal{and} \qquad \calR^{(i)}_{nt_j} \,=\, \lambda\bigl(\calS^{(i)}_{n(t_i - t_{i - 1})} \cap \calS^{(i + 1)}_{n(t_j - t_i)}\bigr).
\end{equation*}
The random variables $\calV^{(i)}_{n(t_i - t_{i - 1})}$, for $i \in \{1, 2, \ldots, k\}$, are independent, $\calS^{(i)}_{n(t_i - t_{i - 1})}$ has the same law as $\calS_{n(t_i - t_{i - 1})}$, and $\calR^{(i)}_{nt_j}$ has the same law as $\lambda\bigl(\calS_{n(t_i - t_{i - 1})} \cap \calS'_{n(t_j - t_i)}\bigr)$, with $\calS'_{n(t_j - t_i)}$ being an independent copy of $\calS_{n(t_j - t_i)}$. 
For arbitrary $(\xi_1, \xi_2, \ldots, \xi_k) \in \bbR^k$ we have
\begin{align*}
		\sum_{j = 1}^k \xi_j Y^n_{t_j}
		& \,=\, \sum_{j = 1}^k \xi_j \OBL{\frac{\calV_{nt_j} - nt_j \Capa(\sB)}{\sigma \sqrt{n}}} \\
		& \,=\, \frac{1}{\sigma \sqrt{n}}\sum_{j = 1}^k \xi_j \OBL{\sum_{i = 1}^j \calV^{(i)}_{n(t_i - t_{i - 1})} - \sum_{i = 1}^{j - 1} \calR^{(i)}_{nt_j} - \sum_{i = 1}^j n(t_i - t_{i - 1}) \Capa(\sB)} \\
		& \,=\, \sum_{j = 1}^k \xi_j \sum_{i = 1}^j \frac{\calV^{(i)}_{n(t_i - t_{i - 1})} - n(t_i - t_{i - 1}) \Capa(\sB)}{\sigma \sqrt{n}} - \frac{1}{\sigma} \sum_{j = 1}^k \xi_j \sum_{i = 1}^{j - 1} \frac{\calR^{(i)}_{nt_j}}{\sqrt{n}} \\
		& \,=\, \sum_{i = 1}^k \OBL{\sum_{j = i}^k \xi_j} \frac{\calV^{(i)}_{n(t_i - t_{i - 1})} - n(t_i - t_{i - 1}) \Capa(\sB)}{\sigma \sqrt{n}} - \frac{1}{\sigma} \sum_{j = 1}^k \xi_j \sum_{i = 1}^{j - 1} \frac{\calR^{(i)}_{nt_j}}{\sqrt{n}}.
\end{align*}
\Cref{tm:CLT} provides that
\begin{equation*}
	\frac{\calV^{(i)}_{n(t_i - t_{i - 1})} - n(t_i - t_{i - 1}) \Capa(\sB)}{\sigma \sqrt{n}} \,\xrightarrow[n\nearrow\infty]{(\text{d})}\, \calN(0, t_i - t_{i - 1}).
\end{equation*}
Markov's inequality combined with \Cref{Cor:moment_bounds} implies that for every $\varepsilon > 0$,
\begin{align*}
\bbP\OBL{\frac{\calR^{(i)}_{nt_j}}{\sqrt{n}} > \varepsilon}
	& \,\le\, \frac{\bbE[\lambda(\calS_{n(t_i - t_{i - 1})} \cap \calS'_{n(t_j - t_i)})]}{\varepsilon \sqrt{n}} \\
	& \,\le\, \frac{\bbE[\lambda(\calS_{nt_j} \cap \calS'_{\infty})]}{\varepsilon \sqrt{n}} 
	 \,\le\,  \frac{c\, h(nt_j)}{\varepsilon \sqrt{n}},
\end{align*}
which converges to zero, as $n$ tends to infinity.
Since for $i \in \{1, 2, \ldots, k\}$ the random variables $\calV^{(i)}_{n(t_i - t_{i - 1})}$ are independent, we obtain
\begin{align*}
\sum_{j = 1}^k \xi_j Y^n_{t_j} \,\xrightarrow[n\nearrow\infty]{(\text{d})}\, 
\calN\left(0, \sum_{i = 1}^k \Bigl(\sum_{j = i}^k \xi_j\Bigr)^2 (t_i - t_{i - 1})\right).
\end{align*}
It follows that the finite dimensional distributions of $\{Y^{n}\}_{n\ge1}$ converge weakly to the finite dimensional distributions of a one-dimensional  standard Brownian motion.
\medskip

\noindent
\textit{Condition (ii)}.
Let $\{T_n\}_{n \ge1}$ be a bounded sequence of $\{Y^{n}\}_{n\ge1}$-stopping times, and let $\{b_n\}_{n \ge 1} \subset [0,\infty)$ be an arbitrary sequence which converges to zero. We aim to prove that
\begin{equation*}\label{ToShow}
	Y^{n}_{T_n + b_n} - Y^{n}_{T_n}\, \xrightarrow[n\nearrow\infty]{\bbP}\, 0,
\end{equation*}
where the convergence holds in probability. By \cref{eq:def_of_Y^n_t}, we have
\begin{equation}\label{eq:Y-Y-1}
	Y^{n}_{T_n + b_n} - Y^{n}_{T_n} \,=\, \frac{\calV_{n(T_n + b_n)} - n(T_n + b_n)\Capa(\sB)}{\sigma \sqrt{n}} - \frac{\calV_{nT_n} - nT_n \Capa(\sB)}{\sigma \sqrt{n}}.
\end{equation}
The Markov property and rotational invariance of $\X$ yield
\begin{align*}
	\calV_{n(T_n + b_n)} - \calV_{nT_n}
	& \,=\, \lambda\bigl((\calS_{nT_n} \cup \calS[nT_n, n(T_n + b_n)]) - X_{nT_n}\bigr) - \lambda\bigl(\calS_{nT_n} - X_{nT_n}\bigr) \\
	& \,=\, \lambda(\calS^{(1)}_{nT_n}) + \lambda(\calS^{(2)}_{nb_n}) - \lambda\bigl(\calS^{(1)}_{nT_n} \cap \calS^{(2)}_{nb_n}\bigr) - \lambda(\calS^{(1)}_{nT_n})\\
	& \,=\, \lambda(\calS^{(2)}_{nb_n}) - \lambda\bigl(\calS^{(1)}_{nT_n} \cap \calS^{(2)}_{nb_n}\bigr),
\end{align*}
where $\calS^{(1)}_{nT_n}$ and $\calS^{(2)}_{nb_n}$ are independent and have the same distribution as $\calS_{nT_n}$ and $\calS_{nb_n}$, respectively. \Cref{eq:Y-Y-1} implies
\begin{align*}
	Y^{n}_{T_n + b_n} - Y^{n}_{T_n} \,=\, \frac{\lambda(\calS^{(2)}_{nb_n}) - \lambda\bigl(\calS^{(1)}_{nT_n} \cap \calS^{(2)}_{nb_n}\bigr) - nb_n \Capa(\sB)}{\sigma\sqrt{n}}.
\end{align*}
With a slight abuse of notation we write $\calV_{nb_n} = \lambda(\calS^{(2)}_{nb_n})$. By \Cref{lm:obtaining_tC(B)}, we obtain
\begin{align}\label{al:Y-Y-2}
		Y^{n}_{T_n + b_n} - Y^{n}_{T_n}
		& \,=\, \frac{\calV_{nb_b} - \bbE[\calV_{nb_n}]}{\sigma \sqrt{n}} \,+\, \frac{\bbE[\lambda\bigl(\calS_{nb_n} \cap \calS[nb_b, \infty)\bigr)]}{\sigma \sqrt{n}} \,-\, \frac{\lambda\bigl(\calS^{(1)}_{nT_n} \cap \calS^{(2)}_{nb_n}\bigr)}{\sigma \sqrt{n}}.
\end{align}
We prove that the three terms on the right-hand side of \cref{al:Y-Y-2} converge to zero in probability. For the first term, Chebyshev's inequality
yields that for every $\varepsilon > 0$,
\begin{equation*}
\bbP\OBL{\APS{\frac{\calV_{nb_n} - \bbE[\calV_{nb_n}]}{\sqrt{n}}} > \varepsilon} \,\le\, \frac{\Var(\calV_{nb_n})}{\varepsilon^2 n},
\end{equation*}
and we are left to show that
\begin{equation}\label{eq:term1-key}
\lim_{n \nearrow \infty} \frac{\Var(\calV_{nb_n})}{n} \,=\, 0.
\end{equation}
This follows by \Cref{lm:positivity}. Indeed, there exist  $t_1, c_1 > 0$, such that for every $t \ge t_1$, we have $
\Var(\calV_t) \,\le\,  c_1t$, and whence 
for $nb_n \ge t_1$, $\Var(\calV_{nb_n})\, \le\, c_1nb_n$.
For $nb_n < t_1$ we observe that
$\Var(\calV_{nb_n}) \,\le\, \bbE[\calV_{nb_n}^2]\, \le\,  \bbE[\calV_{t_1}^2]$.  This trivially implies \cref{eq:term1-key}. 

By \Cref{Cor:moment_bounds}, similarly as above, we show that there is $t_2>0$ such that for all $n\in \N$
\begin{equation*}
\bbE[\lambda\bigl(\calS_{nb_n} \cap \calS[nb_n, \infty)\bigr)] 
\,\le\, c\,h(nb_n) + \bbE[\calV_{t_2}].
\end{equation*}
We then easily conclude that the second term on the right-hand side of \cref{al:Y-Y-2} converges to zero in probability.

There exists  $c_2 > 0$ such that $\sup_{n \ge 1} T_n \le c_2$. By the  Markov inequality and \Cref{Cor:moment_bounds}, we obtain that for every $\varepsilon > 0$
\begin{align*}
\bbP\OBL{\frac{\lambda\bigl(\calS^{(1)}_{nT_n} \cap \calS^{(2)}_{nb_n}\bigr)}{\sigma \sqrt{n}} > \varepsilon}
		 \,\le\, \frac{\bbE[\lambda(\calS^{(1)}_{nT_n} \cap \calS^{(2)}_{nb_n})]}{\varepsilon\sigma \sqrt{n}} 
		\,\le\, \frac{\bbE[\lambda(\calS^{(1)}_{c_2n} \cap \calS^{(2)}_{\infty})]}{\varepsilon\sigma \sqrt{n}}  \,\le\, \frac{c\, h( c_2n)}{\varepsilon \sigma \sqrt{n}},
\end{align*}
which converges to zero, as $n$ tends to infinity.
This shows that the last term on the right-hand side of \cref{al:Y-Y-2} goes to zero in probability and the proof is finished.
\end{proof}

\section{Laws of the iterated logarithm}\label{Sec:asIP}
This section is devoted to the proof of the following result.

\begin{theorem}\label{tm:LIL} 
If $d/\alpha>9/5$, then  the process $\{\calV_t\}_{t\ge0}$ satisfies   Khintchine's and Chung's law of the iterated logarithm, that is, \cref{Result:LIL(Khi),Result:LIL(Chung)}, respectively.
	\end{theorem}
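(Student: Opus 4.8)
The plan is to follow the strategy of Jain and Pruitt \cite{Jain-Pruitt}, in the form used by Wang and Gao \cite{Wang-Gao} for the Wiener sausage, feeding in the results already established above. The first step is a reduction to the centred process $\langle\calV_t\rangle=\calV_t-\bbE[\calV_t]$: by \Cref{lm:obtaining_tC(B)} and \Cref{Cor:moment_bounds} one has $0\le\bbE[\calV_t]-t\Capa(\sB)\le c\,h(t)$ with $h$ as in \cref{eq:def_of_hd}, and the assumption $d/\alpha>9/5$ forces $h(t)=o(\sqrt{t/\log\log t})$, so this difference is negligible both at the Khintchine scale $\sqrt{t\log\log t}$ and at the finer Chung scale $\sqrt{t/\log\log t}$; hence \cref{Result:LIL(Khi),Result:LIL(Chung)} become equivalent to the corresponding statements for $\langle\calV_t\rangle$. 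All four one-sided inequalities will be proved along a sparse geometric grid $t_k$ (with $t_{k-1}/t_k\to0$), and the passage to all $t$ is handled, as in \cite{Jain-Pruitt,Wang-Gao}, by monotonicity of $t\mapsto\calV_t$ together with the crude bound $\bbE[\calV_t-\calV_s]\le\Capa(\sB)(t-s)+c\,h(t)$.

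The core is a blocking scheme based on \cref{eq:decom_of_Vt}: for a slowly growing number $n=n(t)\to\infty$ of blocks with $t/n\to\infty$ we write $\langle\calV_t\rangle=\sum_{i=1}^{n}\langle\calV^{(i)}_{t/n}\rangle-\langle\calR(t)\rangle$, where the $\calV^{(i)}_{t/n}$ are i.i.d.\ copies of $\calV_{t/n}$ with $\Var(\calV^{(i)}_{t/n})\sim\sigma^2 t/n$ (by \cref{eq:VAR}) and $\bbE[\langle\calV^{(i)}_{t/n}\rangle^4]\le c(t/n)^2$ (by \Cref{lm:4-th-moment}), while $\calR(t)$ collects the pairwise overlaps whose moments are, by \Cref{LM:2.2} and \Cref{Cor:moment_bounds}, controlled by powers of $n\,h(t/n)$. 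The condition $d/\alpha>9/5$ is used precisely here: it guarantees that $n(t)$ may be chosen growing to infinity while keeping $n\,h(t/n)$ — and, through a maximal version of the first-moment estimate, also $\sup_{s\le t}\calR(s)$ — so small that these error contributions are $o(\sqrt{t/\log\log t})$ $\bbP$-almost surely. This is the only point in the argument that needs more than the $d/\alpha>3/2$ of \Cref{tm:CLT,tm:FCLT}.

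Given this error control, the upper halves $\limsup_t\langle\calV_t\rangle/\sqrt{2\sigma^2 t\log\log t}\le1$ and $\liminf_t\langle\calV_t\rangle/\sqrt{2\sigma^2 t\log\log t}\ge-1$ follow from the first Borel-Cantelli lemma along the grid: after truncating each block at level $\sqrt{(t_k/n_k)\log\log t_k}$, the discarded part being summably small by the fourth-moment bound, a Fuk-Nagaev moderate-deviation estimate for the i.i.d.\ block sum makes the relevant probabilities summable. The lower halves $\limsup_t\langle\calV_t\rangle/\sqrt{2\sigma^2 t\log\log t}\ge1$ and $\liminf_t\langle\calV_t\rangle/\sqrt{2\sigma^2 t\log\log t}\le-1$ come from the second Borel-Cantelli lemma applied to the genuinely independent increments $\lambda(\calS[t_k,t_{k+1}]-X_{t_k})$: by \cref{eq:VOL} these differ from $\langle\calV_{t_{k+1}}\rangle-\langle\calV_{t_k}\rangle$ only by overlaps already shown negligible, they are independent across $k$, and each is asymptotically Gaussian by \Cref{tm:CLT}, so on the sparse grid $\sum_k\bbP(\lambda(\calS[t_k,t_{k+1}]-X_{t_k})-\bbE[\calV_{t_{k+1}-t_k}]\ge(1-\varepsilon)\sqrt{2\sigma^2 t_{k+1}\log\log t_{k+1}})=\infty$, which together with the upper bound applied at time $t_k$ yields the claim.

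For Chung's law \cref{Result:LIL(Chung)} the same scheme is run at the functional level, using \Cref{tm:FCLT} (and the moment bounds, to make the convergence uniform in the shrinking radius) to transport the Brownian small-ball asymptotics $\bbP(\sup_{0\le u\le1}|W_u|\le\varepsilon)=\exp(-(1+o(1))\pi^2/(8\varepsilon^2))$, applied with radius of order $1/\sqrt{\log\log t_k}$. Running the increment process over disjoint intervals $[t_{k-1},t_k]$ on the sparse grid — where the dependence on the past is again removed up to negligible overlaps and where $|\langle\calV_{t_{k-1}}\rangle|=o(\sqrt{t_k/\log\log t_k})$ by the upper bound just proved — the second Borel-Cantelli lemma gives $\liminf\le\pi/\sqrt{8}$ for the normalized supremum, while the matching bound $\ge\pi/\sqrt{8}$ follows from the first Borel-Cantelli lemma along a finer grid plus interpolation. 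The main obstacle, and where the real work lies, is the uniform (in $s$ and along the grid) control of the overlap errors at the Chung scale $\sqrt{t/\log\log t}$: one must keep $n(t)\to\infty$ so the block sum is nearly Gaussian while keeping $\sup_{s\le t}\calR(s)$ small, and upgrade the pointwise estimate of \Cref{Cor:moment_bounds} to a maximal bound over dyadic intervals — balancing these requirements is exactly what pins down the threshold $d/\alpha>9/5$.
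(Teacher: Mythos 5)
Your proposal departs substantially from the paper's proof, and a key structural claim in it is not correct.

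\textbf{Different route.} You describe a Kolmogorov-style blocking scheme: at each point $t_k$ of a sparse grid, split $[0,t_k]$ into $n(t_k)\to\infty$ equal blocks, apply a Fuk--Nagaev moderate-deviation bound to the i.i.d.\ block sum for the first Borel--Cantelli (upper) halves, and the FCLT plus Brownian small-ball asymptotics plus the second Borel--Cantelli for the lower halves. The paper, however, genuinely follows Jain--Pruitt: it builds one \emph{dense} grid $\{n_i\}$ with $n_{i+1}/n_i\to1$ and gaps $n_{i+1}-n_i=\mathcal{O}(\sqrt{n_i}/\log n_i)$, writes $\calV_{n_i}=\sum_{j<i}\calV(n_j,n_{j+1}]-\sum_{j<i}\calJ_{n_j}$ as a cumulative sum of genuinely independent increments minus overlaps, and then invokes ready-made LIL theorems for sums of independent random variables --- Petrov's theorem (\Cref{lm:petrov}) for Khintchine and Shao's theorem (\Cref{lm:}) for Chung --- rather than re-proving Gaussian tail and small-ball estimates. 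The inputs it needs are the variance expansion $\Var(\calV_t)=\sigma^2t+\mathcal{O}(t^{1/2}h(t))$ (\Cref{lm:var-asimp}), the fourth-moment bound $\bbE[\langle\calV_t\rangle^4]\le\tilde{c}\,t^2$ (\Cref{lm:4-th-moment}), and --- where $d/\alpha>9/5$ actually enters --- the a.s.\ overlap control of \Cref{lm:4.3'}.

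\textbf{A gap that would sink the sketch as written.} You assert that all four one-sided inequalities can be proved along one sparse grid $t_{k-1}/t_k\to0$, with the passage to general $t$ handled ``by monotonicity of $t\mapsto\calV_t$ together with the crude bound $\bbE[\calV_t-\calV_s]\le\Capa(\sB)(t-s)+c\,h(t)$.'' This cannot work: if $s\in[t_{k-1},t_k]$ with $t_{k-1}/t_k\to0$, then $t_k-s$ can be of order $t_k$, so the sandwiching argument $\langle\calV_s\rangle\le\langle\calV_{t_k}\rangle+(\bbE[\calV_{t_k}]-\bbE[\calV_s])$ produces an error of order $\Capa(\sB)\,t_k$, which is enormous at the Khintchine scale $\sqrt{t\log\log t}$, let alone at the Chung scale. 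The upper (first Borel--Cantelli) halves require a dense grid. The paper's specific grid is chosen exactly so that $\sup_{n_i\le t\le n_{i+1}}|\langle\calV_t\rangle-\langle\calV_{n_i}\rangle|\le\calV[n_i,n_{i+1}]+\bbE[\calV[n_i,n_{i+1}]]$, which is $o(\sqrt{n_i/\log\log n_i})$ a.s.\ by \cref{eq:SLLN,eq:SLLN2,eq:2}; this step has no counterpart in your outline and is precisely what makes the interpolation legitimate.

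\textbf{On the $9/5$ threshold.} You correctly sense that the threshold lives in the a.s.\ control of the overlap terms, but you leave it at the level of a claim. In the paper this is an entire lemma (\Cref{lm:4.3'}), proved via second-moment estimates for $\sum_j\langle\calJ_{n_j}\rangle$, extraction of a further subsequence, Borel--Cantelli, and then a final interpolation --- and the computation explicitly forces $\Lambda=2-d/\alpha<1/5$ when $d/\alpha<2$. Any proof along your lines would need a comparable maximal estimate for $\calR(\cdot)$; without it, the argument does not close, and it is not clear a priori that the threshold you would land on is again $9/5$.
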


We start with the proof of  \cref{Result:LIL(Khi)}, which  is based on the following lemma.

\begin{lemma}[{\cite[Chapter X, Theorem 2]{Petrov}}]\label{lm:petrov} 
Let $\{Y_n\}_{n\ge1}$ be a sequence of independent random variables with mean $0$ and finite variance. Set 
$$S_n\,=\,\sum_{i=1}^nY_i,\qquad \text{and}\qquad  s^2_n\,=\,\sum_{i=1}^n\mathbb{E}[Y_i^2].$$
 Suppose  $\lim_{n\nearrow\infty}s_n=\infty,$ and that for any $\varepsilon>0$, 
	\begin{equation}\label{eq:11}
	\lim_{n \nearrow \infty}\frac{1}{s^2_n}\sum_{i=i_0}^n\mathbb{E}\left[Y_i^2\Ind_{\{|Y_i|\ge\varepsilon\sqrt{s_i^2/\log\log s_i^2}\}}\right]\,=\,0
	\end{equation} 
		and 
	\begin{equation}\label{eq:12}
	\sum_{i=i_0}^\infty\frac{1}{s_i^2\log\log s_i^2}\mathbb{E}\left[Y_i^2\Ind_{\{|Y_i|\ge\varepsilon\sqrt{s_i^2/\log\log s_i^2}\}}\right]\,<\,\infty,
	\end{equation} 
	where $i_0=\min\{i\ge1:\log\log s_j^2>0,\  j\ge i\}.$ 
	Then 
	$$\limsup_{n\nearrow \infty}\frac{S_n}{\sqrt{2s_n^2\log\log s_n^2}}\,=\,1\qquad \Prob\text{-a.s.}$$
\end{lemma}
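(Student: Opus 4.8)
The statement is Kolmogorov's law of the iterated logarithm for independent, not necessarily identically distributed, summands under a Lindeberg-type truncation hypothesis, and the plan is to prove it along the classical lines: reduce first to uniformly small increments, then run the two one-sided arguments. Fix $\varepsilon>0$ and put $a_i=\varepsilon\sqrt{s_i^2/\log\log s_i^2}$ for $i\ge i_0$, $Y_i'=Y_i\Ind_{\{|Y_i|<a_i\}}$, $Y_i''=Y_i-Y_i'$. Chebyshev's inequality gives $\bbP(Y_i''\ne0)\le a_i^{-2}\bbE[Y_i^2\Ind_{\{|Y_i|\ge a_i\}}]$, so \eqref{eq:12} yields $\sum_i\bbP(Y_i''\ne0)<\infty$ and the Borel--Cantelli lemma forces $Y_i''=0$ eventually, $\Prob$-a.s.; hence $\sum_{i\le n}Y_i''$ is eventually a fixed finite random variable and, since $s_n\nearrow\infty$, it is $o(\sqrt{2s_n^2\log\log s_n^2})$. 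Since $\bbE[Y_i]=0$ one has $|\bbE[Y_i']|\le a_i^{-1}\bbE[Y_i^2\Ind_{\{|Y_i|\ge a_i\}}]$, and a routine Kronecker/Toeplitz summation based on \eqref{eq:11} shows $\sum_{i\le n}\bbE[Y_i']=o(\sqrt{2s_n^2\log\log s_n^2})$. Writing $\widehat Y_i=Y_i'-\bbE[Y_i']$, $\widehat S_n=\sum_{i\le n}\widehat Y_i$, condition \eqref{eq:11} also gives $\Var(\widehat S_n)=s_n^2(1+o(1))$, while $|\widehat Y_i|\le 2a_i=2\varepsilon\sqrt{s_i^2/\log\log s_i^2}$. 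Thus it suffices to prove $\limsup_n\widehat S_n/\sqrt{2s_n^2\log\log s_n^2}=1$ $\Prob$-a.s.\ for such uniformly small summands, with errors vanishing as $\varepsilon\downarrow0$.

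For the upper bound I would use a Bernstein--Prokhorov exponential inequality: if $\widehat Y_1,\dots,\widehat Y_n$ are independent, mean zero, with $|\widehat Y_i|\le M_n$ and $\sum_{i\le n}\Var(\widehat Y_i)=B_n^2$, then $\bbP(\widehat S_n\ge\lambda B_n)\le\exp(-\tfrac12\lambda^2(1-c\lambda M_n/B_n))$ for $\lambda M_n/B_n$ bounded. Choosing a geometric subsequence $n_k$ with $s_{n_k}^2\asymp\rho^k$ ($\rho>1$), taking $\lambda=(1+\delta)\sqrt{2\log\log s_{n_k}^2}$ and using $M_{n_k}/B_{n_k}\to0$, one gets $\bbP\bigl(\widehat S_{n_k}\ge(1+\delta)\sqrt{2s_{n_k}^2\log\log s_{n_k}^2}\bigr)\le(Ck)^{-(1+\delta)^2+o(1)}$, which is summable; L\'evy's maximal inequality transfers the bound to the blocks $n_{k-1}<n\le n_k$ at the cost of a factor $\sqrt{\rho}$, and the first Borel--Cantelli lemma gives $\limsup\le(1+\delta)\sqrt{\rho}$. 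Letting $\rho\downarrow1$, $\delta\downarrow0$, $\varepsilon\downarrow0$ yields $\limsup_n\widehat S_n/\sqrt{2s_n^2\log\log s_n^2}\le1$.

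For the lower bound I would take a lacunary subsequence $n_k$ with $s_{n_k}^2\asymp\rho^k$, $\rho$ large, and consider the independent increments $D_k=\widehat S_{n_k}-\widehat S_{n_{k-1}}$, with $\Var(D_k)=s_{n_k}^2(1-\rho^{-1}+o(1))$. Here the Lindeberg-type hypothesis is what licenses a Cram\'{e}r/Esseen-type \emph{lower} estimate $\bbP\bigl(D_k\ge(1-\eta)\sqrt{2\Var(D_k)\log\log s_{n_k}^2}\,\bigr)\ge c\,(Ck)^{-(1-\eta)^2}(\log k)^{-1/2}$, whose sum over $k$ diverges; as the events are independent, the second Borel--Cantelli lemma makes infinitely many of them occur. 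On that subsequence $D_k\ge(1-\eta)\sqrt{2(1-\rho^{-1})}\sqrt{s_{n_k}^2\log\log s_{n_k}^2}$ infinitely often, while the upper bound already proved gives $|\widehat S_{n_{k-1}}|=O(\rho^{-1/2})\sqrt{s_{n_k}^2\log\log s_{n_k}^2}$ eventually; adding these, $\limsup_n\widehat S_n/\sqrt{2s_n^2\log\log s_n^2}\ge(1-\eta)\sqrt{1-\rho^{-1}}-O(\rho^{-1/2})$, and letting $\rho\to\infty$, $\eta\downarrow0$, $\varepsilon\downarrow0$ finishes the proof.

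I expect the main obstacle to be the lower-bound probability estimate for the blocks $D_k$: one needs a lower bound on $\bbP(D_k\ge x_k)$ at the exact iterated-logarithm scale $x_k\asymp\sqrt{\Var(D_k)\log\log s_{n_k}^2}$ that is uniform in $k$, and for a triangular array of uniformly small but non-identically-distributed summands this requires either a carefully controlled Cram\'{e}r-type moderate-deviation expansion or an auxiliary truncation-and-tilting argument; the reductions and the upper half are comparatively routine.
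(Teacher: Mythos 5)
The paper does not prove this lemma; it is imported verbatim from Petrov's book as a black box, so there is no internal argument to compare yours against. Your proposal follows the classical Kolmogorov truncation route and is likely close in spirit to Petrov's own proof, but the reduction step contains a genuine error.

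You assert that Chebyshev plus \eqref{eq:12} gives $\sum_i \bbP(Y_i'' \ne 0) < \infty$ so that $Y_i'' = 0$ eventually $\Prob$-a.s.\ by Borel--Cantelli. This is false. Chebyshev gives $\bbP(|Y_i| \ge a_i) \le a_i^{-2}\bbE[Y_i^2\Ind_{\{|Y_i|\ge a_i\}}]$, and with $a_i = \varepsilon\sqrt{s_i^2/\log\log s_i^2}$ one has $a_i^{-2} = \varepsilon^{-2}(\log\log s_i^2)/s_i^2$. Condition \eqref{eq:12} controls the same truncated second moments but with the weight $(s_i^2\log\log s_i^2)^{-1}$, which is smaller by the unbounded factor $(\log\log s_i^2)^2$; so \eqref{eq:12} does not imply summability of the Chebyshev bounds, and invoking it with a different $\varepsilon'$ only enlarges the indicator set without fixing the weight. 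The correct way to exploit \eqref{eq:12} is a Kolmogorov-type strong law: with $b_i = \sqrt{s_i^2\log\log s_i^2}$, which is eventually increasing to infinity, one has $\sum_i \Var(Y_i'')/b_i^2 \le \sum_i \bbE[Y_i^2\Ind_{\{|Y_i|\ge a_i\}}]/b_i^2 < \infty$ by \eqref{eq:12}, so Kolmogorov's convergence theorem together with Kronecker's lemma give $b_n^{-1}\sum_{i\le n}(Y_i'' - \bbE[Y_i'])\to 0$ $\Prob$-a.s.; your $a_i^{-1}$ bound on $|\bbE[Y_i']|$ combined with \eqref{eq:11} then disposes of the centering term as you wrote. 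A secondary imprecision: in your upper-bound block the correction term in the Bernstein exponent is not $o(1)$ but $O(\varepsilon)$, since with your truncation level $\lambda M_n/B_n$ tends to a positive constant of order $\varepsilon$ rather than to zero; the conclusion still follows after letting $\varepsilon\downarrow 0$, but the intermediate estimate should be stated accordingly.
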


\begin{proof}[Proof of \Cref{tm:LIL} - Khintchine's law of the iterated logarithm] 
By $\lfloor a\rfloor$ we denote the greatest integer less than or equal to  $a\in\R$. 
We consider a sequence $\{n_i\}_{i\ge0}$ of non-negative
integers such that if $2^k\leq n_i<2^{k+1}$, then $n_i$ runs over 
all consecutive  integers
of the form 
$2^k+\lfloor j2^{k/2}/k\rfloor$, for $k\in\N$ and  $j=0,1,\dots,\lfloor k 2^{k/2}\rfloor$. 
We set $n_0=0$.
 Clearly, if
  $2^k\le n_i< 2^{k+1}$
then
   $0\le n_{i+1}-n_i\le 2^{k/2}/k+1$. 
   Hence, 
   \begin{equation}\label{eq:2}
   \lim_{i\nearrow\infty}\frac{n_{i+1}}{n_i}\,=\,1\qquad \text{and}\qquad n_{i+1}-n_i\,=\,\mathcal{O}(n_i^{1/2}/\log n_i).
   \end{equation}
Since
	$$\sup_{n_i\le t\le n_{i+1}}|\langle \calV_t \rangle -\langle \calV_{n_i} \rangle|\,\le\, \calV[n_i,n_{i+1}]+\mathbb{E}[\calV[n_i,n_{i+1}]],$$
we see that in the case when $d/\alpha>1$ (transience)
 \cref{eq:SLLN,eq:SLLN2,eq:2} imply that
	\begin{equation}\label{eq:13}\lim_{i\nearrow\infty}\frac{\sup_{n_i\le t\le n_{i+1}}|\langle \calV_t \rangle -\langle \calV_{n_i} \rangle|}{\sqrt{n_i/\log\log n_i}}\,=\,0\qquad \Prob\text{-a.s.}\end{equation}
Thus, it suffices to prove that $\Prob$-a.s.
	$$\liminf_{i\nearrow \infty}\frac{\langle\calV_{n_i}\rangle}{\sqrt{2\sigma^2n_i\log\log n_i}}\,=\,-1\qquad \text{and}\qquad \limsup_{i\nearrow \infty}\frac{\langle\calV_{n_i}\rangle}{\sqrt{2\sigma^2n_i\log\log n_i}}\,=\,1.$$ 
	We only discuss the second relation as the first one can be handled in an analogous way. 
	For $i\ge0$ we set 
	\begin{align}\label{J_process}
	\mathcal{V}(n_i,n_{i+1}]\,=\,\lambda(\calS(n_i,n_{i+1}])\qquad \text{and}\qquad \mathcal{J}_{n_i}\,=\,\lambda(\calS(n_{i},n_{i+1}]\cap\calS_{n_i}),
\end{align}	  
	where $\calS(s,t]=\bigcup_{s<u\le t}\{X_u+\sB\}$. 
	Observe that $\{\mathcal{V}(n_i,n_{i+1}]\}_{i\ge0}$ forms a sequence of independent random variables, and for $i\geq 1$ we have
	\begin{equation}\label{eq:dec}\calV_{n_i}\,=\,\sum_{j=0}^{i-1}\mathcal{V}(n_j,n_{j+1}]-\sum_{j=0}^{i-1}\mathcal{J}_{n_j},
	\end{equation}
	which yields
	$$\frac{\langle\calV_{n_i}\rangle}{\sqrt{2\sigma^2n_i\log\log n_i}}\,=\,\frac{\sum_{j=0}^{i-1}\langle\mathcal{V}(n_j,n_{j+1}]\rangle}{\sqrt{2\sigma^2n_i\log\log n_i}}-\frac{\sum_{j=0}^{i-1}\langle\mathcal{J}_{n_j}\rangle}{\sqrt{2\sigma^2n_i\log\log n_i}}.$$
In \Cref{lm:4.3'} we show that if
 $d/\alpha>9/5$, then
	the last term on the right-hand side of the above identity converges to zero $\Prob$-a.s.
	Thus we are left to prove that
	$$\limsup_{i\nearrow \infty}\frac{\sum_{j=0}^{i-1}\langle\mathcal{V}(n_j,n_{j+1}]\rangle}{\sqrt{2\sigma^2n_i\log\log n_i}}\,=\,1\qquad \Prob\text{-a.s.}$$
	 When $d/\alpha\in(1,2)$ we  set $\Lambda=2-d/\alpha$ which satisfies $\Lambda \in (0,1)$.
	We apply \Cref{lm:var-asimp} and obtain that for $n_i\in[2^{k},2^{k+1}]$ there are constants $c_1,c_2,c_3,c_4>0$ such that 
	\begin{align*}
	\Var\left(\sum_{j=0}^{i-1}\mathcal{V}(n_j,n_{j+1}]\right)
	&\,\le\,\sigma^2n_i+c_1\left(\sum_{j=0}^{i-1}(n_{j+1}-n_j)^{1/2}h(n_{j+1}-n_j)\right)\\
	&\,\le\, \sigma^2n_i+c_2\left(\sum_{l=1}^{k}2^{3l/4}h(2^{l/2}/l)l^{1/2}\right)\\
	&\,\le\,\begin{cases}
	\sigma^2n_i+c_3\left(\sum_{l=1}^{k}2^{3l/4}l^{1/2}\right), & d/\alpha> 2, \\
	\sigma^2n_i+c_3\left(\sum_{l=1}^{k}2^{3l/4}l^{3/2}\right), & d/\alpha = 2,\\
	\sigma^2n_i+c_3\left(\sum_{l=1}^{k}2^{(2\Lambda+3)l/4}l^{1/2-\Lambda}\right), & d/\alpha \in(1,2).
	\end{cases}\\
	&\,\le\, \begin{cases}
	\sigma^2n_i+c_42^{3k/4}k^{3/2}, & d/\alpha\ge 2, \\
	\sigma^2n_i+c_42^{(2\Lambda+3)k/4}k^{1/2}, & d/\alpha \in(1,2).
	\end{cases}\\
	&\,=\,\begin{cases}
	\sigma^2n_i+\mathcal{O}(n_i^{3/4}(\log n_i)^{3/2}), & d/\alpha\ge 2, \\
	\sigma^2n_i+\mathcal{O}(n_i^{(2\Lambda+3)/4}(\log n_i)^{1/2}), & d/\alpha \in(1,2).
	\end{cases}\\
	\end{align*}
Hence, for $d/\alpha>3/2$, 
\begin{equation}\label{eq:q}
\Var\left(\sum_{j=0}^{i-1}\mathcal{V}(n_j,n_{j+1}]\right)\,=\,\sigma^2n_i+\mathsf{o}(n_i).
\end{equation} 
This enables us to apply \Cref{lm:petrov}. We only need to show that 
	$$\sum_{i=2}^\infty\frac{1}{n_i}\mathbb{E}\left[\langle\mathcal{V}(n_i,n_{i+1}]\rangle^2\Ind_{\{|\langle\mathcal{V}(n_i,n_{i+1}]\rangle|\ge\varepsilon\sqrt{n_i/\log\log n_i}\}}\right]\,<\,\infty$$
	as then Kronecker's lemma implies both \cref{eq:11,eq:12}.
	By \Cref{lm:4-th-moment} we obtain
	 $$\mathbb{E}[\langle\mathcal{V}(n_i,n_{i+1}]\rangle^4]\,\le\,c_5(n_{i+1}-n_i)^2\,\le\, c_5n_i$$ 
	 for some $c_5>0$.
Finally, we have
	\begin{align*}
	&\sum_{i=2}^\infty\frac{1}{n_i}\mathbb{E}\left[\langle\mathcal{V}(n_i,n_{i+1}]\rangle^2 \Ind_{\{|\langle\mathcal{V}(n_i,n_{i+1}]\rangle|\ge\varepsilon\sqrt{n_i/\log\log n_i}\}}\right]\\&\,\le\,\sum_{i=2}^\infty\frac{\log\log n_i}{\varepsilon^2n_i^2}\mathbb{E}\left[\langle\mathcal{V}(n_i,n_{i+1}]\rangle^4\right]\\
	&\,\le\, \frac{c_5}{\varepsilon^2}\sum_{i=2}^\infty \frac{\log\log n_i}{n_i}\\
	&\,\le\, \frac{c_5}{\varepsilon^2}\sum_{k=1}^\infty\sum_{l=0}^{k2^{k/2}}\frac{\log\log(2^{k}+l2^{k/2}/k)}{2^{k}+l2^{k/2}/k-1}\\
	&\,\le\,\frac{c_5}{\varepsilon^2}\sum_{k=1}^\infty \frac{k\log\log 2^{k+1}}{2^{k/2}-2^{-k/2}}\,<\,\infty,
	\end{align*}
	which completes the proof.
\end{proof}

The proof of Chung's law of the iterated logarithm is based on the following result. 

\begin{lemma}[{\cite[Theorem A]{Shao}}]\label{lm:} 
Let $\{Y_n\}_{n\ge1}$ be a sequence of independent random variables with mean $0$ and finite variance. Set $$S_n\,=\,\sum_{i=1}^nY_i,\qquad \text{and}\qquad  s^2_n\,=\,\sum_{i=1}^n\mathbb{E}[Y_i^2].$$ 
Suppose that $\lim_{n\nearrow\infty}s_n=\infty$, $\mathbb{E}[Y_n^2]=\mathsf{o}(s_n^2/\log\log s_n^2)$, 
and that  
$\{Y_n^2/\mathbb{E}[Y_n^2]\}_{n\ge1}$ is uniformly integrable. 
 Then
 $$\liminf_{n\nearrow \infty}\frac{\max_{1\le i\le n}|S_i|}{\sqrt{s_n^2/\log\log s_n^2}}\,=\,\frac{\pi}{\sqrt{8}}\qquad \Prob\text{-a.s.}$$
\end{lemma}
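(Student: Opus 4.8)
The plan is to prove the two inequalities
$\liminf_{n}\frac{\max_{1\le i\le n}|S_i|}{\sqrt{s_n^2/\log\log s_n^2}}\ge \pi/\sqrt8$ and $\le\pi/\sqrt8$ separately, in both cases reducing the partial sums to a Brownian path and combining the sharp small-deviation asymptotics $-\log\Prob\bigl(\sup_{0\le u\le1}|W_u|\le\varepsilon\bigr)\sim \pi^2/(8\varepsilon^2)$ as $\varepsilon\downarrow0$ with the Borel--Cantelli lemmas. The hypotheses serve two purposes. Together with $s_n\to\infty$, the Lindeberg-type conditions (uniform integrability of $\{Y_n^2/\bbE[Y_n^2]\}$ plus $\bbE[Y_n^2]=\mathsf{o}(s_n^2/\log\log s_n^2)$) make consecutive values $s_n^2$ comparable and guarantee an invariance principle. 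More importantly, $\bbE[Y_n^2]=\mathsf{o}(s_n^2/\log\log s_n^2)$ forces every individual summand to be negligible at the finer LIL scale $\sqrt{s_n^2/\log\log s_n^2}$: after truncating each $Y_n$ at a level $\delta_n s_n$ with $\delta_n\downarrow0$ chosen slowly, the truncation error is almost surely $\mathsf{o}\bigl(\sqrt{s_n^2/\log\log s_n^2}\bigr)$ and a strong approximation applies to the (bounded) truncated array, so the partial-sum path may be replaced by $\{W_{s_{\lfloor ns\rfloor}^2}\}$ up to an error negligible on that scale.

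For the upper bound $\liminf\le\pi/\sqrt8$, fix $\varepsilon>0$, pick $\gamma\in\bigl(1,(1+\varepsilon)^2\bigr)$, and let $n_k$ be defined by $s_{n_k}^2\approx\exp(k^\gamma)$; since $\gamma>1$ we get $s_{n_{k-1}}^2=\mathsf{o}(s_{n_k}^2)$, so the block variances $v_k^2=s_{n_k}^2-s_{n_{k-1}}^2$ satisfy $v_k^2/s_{n_k}^2\to1$. Consider the events
\begin{equation*}
B_k=\Bigl\{\max_{n_{k-1}<i\le n_k}|S_i-S_{n_{k-1}}|\le (1+\varepsilon)\tfrac{\pi}{\sqrt8}\sqrt{s_{n_k}^2/\log\log s_{n_k}^2}\Bigr\},
\end{equation*}
which are independent since they depend on disjoint blocks of the $Y_j$. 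The Gaussian approximation over the $k$-th block together with the Brownian small-ball lower bound gives $\Prob(B_k)\ge(\log s_{n_k}^2)^{-(1+\mathsf{o}(1))/(1+\varepsilon)^2}$, and as $\log s_{n_k}^2\approx k^\gamma$ with $\gamma<(1+\varepsilon)^2$ the series $\sum_k\Prob(B_k)$ diverges; by the second Borel--Cantelli lemma $B_k$ occurs for infinitely many $k$ almost surely. On such $k$ the triangle inequality yields $\max_{i\le n_k}|S_i|\le\max_{i\le n_{k-1}}|S_i|+(1+\varepsilon)\tfrac{\pi}{\sqrt8}\sqrt{s_{n_k}^2/\log\log s_{n_k}^2}$, and invoking the upper half of the (Kolmogorov/Hartman--Wintner) law of the iterated logarithm, which holds under the present hypotheses, $\max_{i\le n_{k-1}}|S_i|=O\bigl(\sqrt{s_{n_{k-1}}^2\log\log s_{n_{k-1}}^2}\bigr)=\mathsf{o}\bigl(\sqrt{s_{n_k}^2/\log\log s_{n_k}^2}\bigr)$ by the super-polynomial growth of $s_{n_k}^2/s_{n_{k-1}}^2$. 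Hence $\max_{i\le n_k}|S_i|\le(1+2\varepsilon)\tfrac{\pi}{\sqrt8}\sqrt{s_{n_k}^2/\log\log s_{n_k}^2}$ infinitely often, so the $\liminf$ is at most $(1+2\varepsilon)\pi/\sqrt8$, and $\varepsilon\downarrow0$ finishes this half.

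For the lower bound $\liminf\ge\pi/\sqrt8$, fix $\varepsilon>0$ and take a geometric subsequence $s_{n_k}^2\approx\rho^k$, $\rho>1$. The key ingredient is a small-deviation \emph{upper} estimate, uniform over the array, of the form $\Prob\bigl(\max_{i\le n}|S_i|\le a\bigr)\le\exp\bigl(-(1-\mathsf{o}(1))\pi^2 s_n^2/(8a^2)\bigr)$, obtained by truncation followed by strong approximation (or directly by a Mogulskii-type exponential inequality built from the eigenfunction $x\mapsto\cos(\pi x/(2a))$ of the Dirichlet Laplacian on $[-a,a]$). Applied with $a=(1-\varepsilon)\tfrac{\pi}{\sqrt8}\sqrt{s_{n_k}^2/\log\log s_{n_k}^2}$ it gives $\Prob\bigl(\max_{i\le n_k}|S_i|<a\bigr)\le(\log s_{n_k}^2)^{-(1-\mathsf{o}(1))/(1-\varepsilon)^2}$; since $\log s_{n_k}^2\approx k\log\rho$ and $1/(1-\varepsilon)^2>1$, this series converges, and the ordinary Borel--Cantelli lemma shows $\max_{i\le n_k}|S_i|\ge(1-\varepsilon)\tfrac{\pi}{\sqrt8}\sqrt{s_{n_k}^2/\log\log s_{n_k}^2}$ for all large $k$. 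Finally one fills the gaps: $\max_{i\le n}|S_i|$ is non-decreasing, and for $n_k\le n<n_{k+1}$ one has $s_n^2\le(1+\mathsf{o}(1))\rho\,s_{n_k}^2$ and $\log\log s_n^2\sim\log\log s_{n_k}^2$, whence $\max_{i\le n}|S_i|/\sqrt{s_n^2/\log\log s_n^2}\ge(1-\varepsilon)\rho^{-1/2}\pi/\sqrt8-\mathsf{o}(1)$; letting $\rho\downarrow1$ and then $\varepsilon\downarrow0$ gives $\liminf\ge\pi/\sqrt8$ and completes the proof.

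I expect the main obstacle to be the uniform small-deviation upper bound used in the lower-bound half. Unlike the upper-bound half, there is no way to exploit independence of disjoint blocks — the event $\{\max_{i\le n}|S_i|\le a\}$ couples all partial sums — so one needs a genuine exponential estimate for the probability that a non-identically-distributed partial-sum process stays inside a narrow tube, valid only under the weak Lindeberg-type moment assumptions at hand. Controlling the Gaussian approximation down to the scale $\sqrt{s_n^2/\log\log s_n^2}$ rather than merely $\sqrt{s_n^2}$ is precisely the delicate point, and it is exactly here that $\bbE[Y_n^2]=\mathsf{o}(s_n^2/\log\log s_n^2)$ and the uniform integrability of $\{Y_n^2/\bbE[Y_n^2]\}$ are indispensable.
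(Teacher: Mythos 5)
The paper does not prove this lemma; it is quoted verbatim from \cite[Theorem A]{Shao} and used as a black box, so there is no in-paper proof to compare against. That said, your blueprint follows the standard architecture of Chung-type LIL arguments and is structurally sound: the lower bound ($\liminf\ge\pi/\sqrt8$) is obtained from a geometric scale, a small-deviation \emph{upper} estimate and the first Borel--Cantelli lemma plus monotonicity of $n\mapsto\max_{i\le n}|S_i|$; the upper bound ($\liminf\le\pi/\sqrt8$) is obtained from a super-geometric scale $s_{n_k}^2\approx\exp(k^\gamma)$ with $\gamma\in(1,(1+\varepsilon)^2)$, independence of disjoint blocks, the small-deviation \emph{lower} estimate, the second Borel--Cantelli lemma, and the Khintchine LIL to absorb the contribution of $\max_{i\le n_{k-1}}|S_i|$ on the finer scale $\sqrt{s_{n_k}^2/\log\log s_{n_k}^2}$. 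The scale bookkeeping and the Borel--Cantelli applications in both halves check out.

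You also correctly identify the genuine obstacle: the lower-bound half needs a quantitative, uniform small-ball estimate of the form $\Prob\bigl(\max_{i\le n}|S_i|\le a\bigr)\le\exp\bigl(-(1-o(1))\pi^2 s_n^2/(8a^2)\bigr)$ for a general independent (non-identically distributed, not bounded) array at the scale $a\asymp\sqrt{s_n^2/\log\log s_n^2}$, and here one cannot exploit block independence. This is exactly what the cited result of Shao supplies, building on a truncation at level $\delta_n s_n$ (made possible by $\bbE[Y_n^2]=\mathsf{o}(s_n^2/\log\log s_n^2)$ and uniform integrability of $\{Y_n^2/\bbE[Y_n^2]\}$) followed by a strong-approximation or Mogulskii/eigenfunction argument for the truncated bounded array; you are right that the KMT-type coupling at this rate is the delicate point and is not free under mere second-moment hypotheses. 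As a sketch your proposal is faithful to the standard route, but to constitute a proof the uniform small-ball upper estimate must be supplied in detail; as written it is asserted rather than established, which is the only real gap.
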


\begin{proof}[Proof of \Cref{tm:LIL} - Chung's law of the iterated logarithm] 
We observe that for $n_i\le t<n_{i+1}$ it holds
$$
\sup_{0\le s\le t}|\langle\calV_s\rangle|\,\le\, \max_{0\le j\le i} |\langle\calV_{n_j}\rangle|+\max_{0\le j\le i}\sup_{n_j\le s\le n_{j+1}} |\langle\calV_s\rangle-\langle\calV_{n_j}\rangle|.
$$
We first claim that 
\begin{align}\label{eq_to_show}
\lim_{i\nearrow\infty}\frac{\max_{0\le j\le i}\sup_{n_j\le s\le n_{j+1}} |\langle\calV_s\rangle-\langle\calV_{n_j}\rangle|}{\sqrt{n_i/\log\log n_i}}\,=\,0\qquad\Prob\text{-a.s.}
\end{align}
Indeed, 
according to \cref{eq:13} (if $d/\alpha>1$),  there is $\bar\Omega\subseteq\Omega$ such that $\Prob(\bar\Omega)=1$, and for any $\omega\in\bar\Omega$ and any $\varepsilon>0$ there exists $j_0= j_0(\omega,\varepsilon)\in\N$ for which  
$$
\max_{j\ge j_0} \frac{\sup_{n_j\le s\le n_{j+1}} |\langle\calV_s\rangle(\omega)-\langle\calV_{n_j}\rangle(\omega)|}{\sqrt{n_j/\log\log n_j}}\,\le\,\frac{\varepsilon}{2}.$$ 
We then write
\begin{align*}
	&\frac{\max_{0\le j\le i}\sup_{n_j\le s\le n_{j+1}} |\langle\calV_s\rangle(\omega)-\langle\calV_{n_j}\rangle(\omega)|}{\sqrt{n_i/\log\log n_i}}\\&\,\le\,\frac{\max_{0\le j\le j_0}\sup_{n_j\le s\le n_{j+1}} |\langle\calV_s\rangle(\omega)-\langle\calV_{n_j}\rangle(\omega)|}{\sqrt{n_i/\log\log n_i}}\\&\ \ \ \ \ +\frac{\max_{j_0\le j\le i}\sup_{n_j\le s\le n_{j+1}} |\langle\calV_s\rangle(\omega)-\langle\calV_{n_j}\rangle(\omega)|}{\sqrt{n_i/\log\log n_i}}.
		\end{align*}
	We next choose $i_0=i_0(\omega,\varepsilon)\in\N$ such that
	$$
	\frac{\max_{0\le j\le j_0}\sup_{n_j\le s\le n_{j+1}} |\langle\calV_s\rangle(\omega)-\langle\calV_{n_j}\rangle(\omega)|}{\sqrt{n_i/\log\log n_i}}\,\le\,\frac{\varepsilon}{2},\qquad i\ge i_0,
	$$
and we infer \cref{eq_to_show}.
	We are thus left to show that 
	$$\liminf_{i \nearrow \infty}\frac{\max_{0\le j\le i} |\langle\calV_{n_j}\rangle|}{\sqrt{\sigma^2 n_i/\log\log n_i}}\,=\, \frac{\pi}{\sqrt{8}}\qquad \Prob\text{-a.s.}$$ 
	From \cref{eq:dec} we have 
	\begin{align*}
	\max_{0\le j\le i}\left|\sum_{k=0}^{j-1}\langle\mathcal{V}(n_k,n_{k+1}]\rangle\right|-\max_{0\le j\le i}\left|\sum_{k=0}^{j-1}\langle\mathcal{J}_{n_k}\rangle\right|&\,\le\,\max_{0\le j\le i}|\langle\calV_{n_j}\rangle|\\&\,\le\, \max_{0\le j\le i}\left|\sum_{k=0}^{j-1}\langle\mathcal{V}(n_k,n_{k+1}]\rangle\right|+\max_{0\le j\le i}\left|\sum_{k=0}^{j-1}\langle\mathcal{J}_{n_k}\rangle\right|.
	\end{align*} 
	If we proceed similarly as in the proof of \cref{eq_to_show} and apply \Cref{lm:4.3'} instead of \cref{eq:13}, we arrive at
	$$\lim_{i\nearrow\infty}\frac{\max_{0\le j\le i}|\sum_{k=0}^{j-1}\langle\mathcal{J}_{n_k}\rangle|}{\sqrt{n_i/\log\log n_i}}\,=\,0\qquad\Prob\text{-a.s.}$$
	Thus it suffices to show that 
	$$\liminf_{i \nearrow \infty}\frac{ \max_{0\le j\le i}|\sum_{k=0}^{j-1}\langle\mathcal{V}(n_k,n_{k+1}]\rangle|}{\sqrt{\sigma^2 n_i/\log\log n_i}}\,=\, \frac{\pi}{\sqrt{8}}\qquad \Prob\text{-a.s.}$$
	To this end, we apply \Cref{lm:}. Recall that according to \Cref{lm:4-th-moment}, there is $c>0$ such that $$\mathbb{E}[\langle\mathcal{V}(n_i,n_{i+1}]\rangle^4]\,\le c\,(n_{i+1}-n_i)^2.$$
	This combined  with \Cref{lm:var-asimp} gives that $$\sup_{i\ge1}\frac{\mathbb{E}[\langle\mathcal{V}(n_i,n_{i+1}]\rangle^4]}{\mathbb{E}[\langle\mathcal{V}(n_i,n_{i+1}]\rangle^2]^2}\,<\,\infty$$ 
	which implies that  the sequence $\{\langle\mathcal{V}(n_i,n_{i+1}]\rangle^2/\mathbb{E}[\langle\mathcal{V}(n_i,n_{i+1}]\rangle^2]\}_{i\ge1}$ is uniformly integrable. 
In view of	\Cref{lm:var-asimp} and \cref{eq:2}, 
 $$\mathbb{E}[\langle\mathcal{V}(n_i,n_{i+1}]\rangle^2]\,=\,\mathsf{o}(\sqrt{n_i/\log\log n_i}),$$ 
and the proof is finished.
	\end{proof}

\section{Technical results}\label{Sec:Technical}
\begin{lemma}\label{lm:L1-cvg_with_Levi} 
In the notation of the proof of \Cref{lm:positivity},
for any $k\in\bbN$ it holds that
	\begin{equation*}
		\bbE[\lambda(\calS_1 \setminus \calS[1, N]) \mid \calF_{N - k + 1, N}] \,\xrightarrow[N\nearrow\infty]{{\rm L}^1}\, \bbE[\lambda(\calS_1 \setminus \calS[1, \infty))].
	\end{equation*}
\end{lemma}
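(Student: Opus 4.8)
The plan is to split off a deterministic limit and to exploit that the relevant functional of the path near the origin is asymptotically independent of the increments of $\X$ on the far window $[N-k+1,N]$.

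First I would record two elementary facts. Writing $A_N\df\lambda(\calS_1\setminus\calS[1,N])$ and $A_\infty\df\lambda(\calS_1\setminus\calS[1,\infty))$, the sets $\calS[1,N]$ increase to $\calS[1,\infty)=\bigcup_{N\ge1}\calS[1,N]$, so $\calS_1\setminus\calS[1,N]\downto\calS_1\setminus\calS[1,\infty)$; since $\calV_1=\lambda(\calS_1)<\infty$ $\Prob$-a.s.\ and $\bbE[\calV_1]<\infty$ (part of the Spitzer--Port--Stone theory, cf.\ \cref{eq:EXP,eq:SLLN}), downward continuity of $\lambda$ and dominated convergence give $A_N\to A_\infty$ in ${\rm L}^1$. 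In particular $\bbE[A_\infty]$ is the deterministic constant appearing in the statement.

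Then I would decompose
\[
\bbE[A_N\mid\calF_{N-k+1,N}]-\bbE[A_\infty]
=\bbE[A_N-A_\infty\mid\calF_{N-k+1,N}]+\bigl(\bbE[A_\infty\mid\calF_{N-k+1,N}]-\bbE[A_\infty]\bigr).
\]
The first summand has ${\rm L}^1$-norm at most $\|A_N-A_\infty\|_{{\rm L}^1}\to0$ by the ${\rm L}^1$-contraction property of conditional expectation. The second summand is the heart of the matter. Here I would approximate $A_\infty$ in ${\rm L}^1$ by $\lambda(\calS_1\setminus\calS[1,M])$, which is $\sigma(X_s:s\le M)$-measurable: given $\varepsilon>0$, fix $M$ with $\|\lambda(\calS_1\setminus\calS[1,M])-A_\infty\|_{{\rm L}^1}<\varepsilon$. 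For every $N$ with $N-k+1\ge M$, the $\sigma$-algebra $\calF_{N-k+1,N}$ generated by the increments of $\X$ on $[N-k+1,N]$ is independent of $\sigma(X_s:s\le M)$ by the independence of increments, so $\bbE[\lambda(\calS_1\setminus\calS[1,M])\mid\calF_{N-k+1,N}]=\bbE[\lambda(\calS_1\setminus\calS[1,M])]$. The triangle inequality together with the ${\rm L}^1$-contraction property then give, for all such $N$,
\[
\bigl\|\bbE[A_\infty\mid\calF_{N-k+1,N}]-\bbE[A_\infty]\bigr\|_{{\rm L}^1}\,\le\,2\,\bigl\|A_\infty-\lambda(\calS_1\setminus\calS[1,M])\bigr\|_{{\rm L}^1}\,<\,2\varepsilon .
\]
Combining the two summands, $\limsup_{N\nearrow\infty}\|\bbE[A_N\mid\calF_{N-k+1,N}]-\bbE[A_\infty]\|_{{\rm L}^1}\le2\varepsilon$ for every $\varepsilon>0$, which is the assertion.

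The only points needing a little care are the measurability of $\lambda(\calS_1\setminus\calS[1,M])$ with respect to $\sigma(X_s:s\le M)$ (immediate, since this set is constructed from the path restricted to $[0,M]$) and the integrability $\bbE[\calV_1]<\infty$, which is used throughout the paper; the rest is soft measure theory, and I expect the approximation step for $\bbE[A_\infty\mid\calF_{N-k+1,N}]$ to be the only place where the structure of $\X$ (stationary and independent increments) really enters.
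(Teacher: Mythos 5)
Your proof is correct, and it shares the paper's initial decomposition as well as the treatment of the first summand $\bbE[A_N-A_\infty\mid\calF_{N-k+1,N}]$, whose $\mathrm{L}^1$-norm is killed by the contraction property together with $A_N\to A_\infty$ in $\mathrm{L}^1$. Where you diverge from the paper is in the crucial second summand $\bbE[A_\infty\mid\calF_{N-k+1,N}]-\bbE[A_\infty]$. The paper enlarges $\calF_{N-k+1,N}$ to the decreasing filtration $\calH_N=\sigma(X_t:t\ge N-k+1)$, invokes Kolmogorov's $0$--$1$ law to see that $\calH_\infty=\bigcap_N\calH_N$ is trivial, applies reverse martingale (Levi) convergence to obtain $\bbE[m_\infty\mid\calH_N]\to\bbE[m_\infty]$ almost surely, upgrades to $\mathrm{L}^1$ by uniform integrability, and then projects back down to $\calF_{N-k+1,N}$ via the tower property. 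You instead run an elementary $\varepsilon/3$-type approximation: you replace $A_\infty$ by the $\sigma(X_s:s\le M)$-measurable variable $\lambda(\calS_1\setminus\calS[1,M])$, observe that for $N-k+1\ge M$ this variable is independent of $\calF_{N-k+1,N}$ by independence of increments (so its conditional expectation collapses to a constant), and then close the gap using $\mathrm{L}^1$-contraction twice. Your route avoids both the zero-one law and the reverse martingale theorem and makes the role of stationary, independent increments more transparent; the paper's route is slightly shorter once one is willing to cite those two standard facts. Both arguments are sound and both rest on the same two inputs: $A_N\to A_\infty$ in $\mathrm{L}^1$ (which you correctly justify from $A_N\le\calV_1\in\mathrm{L}^1$ and downward continuity of $\lambda$), and decoupling of the far window $[N-k+1,N]$ from the behaviour of the path near the origin.
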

\begin{proof}
	Set $m_N = \lambda(\calS_1 \setminus \calS[1, N])$ and $m_{\infty} = \lambda(\calS_1 \setminus \calS[1, \infty))$. We have
	\begin{align*}
		\lim_{N \nearrow \infty}&
		 \bbE[\aps{\bbE[m_N \mid \calF_{N - k + 1, N}] - \bbE[m_{\infty}]}] \\
		& \,\le\, \lim_{N \nearrow \infty} \bbE\big[\aps{\bbE[m_N - m_{\infty} \mid \calF_{N - k + 1, N}]} + \aps{\bbE[m_{\infty} \mid \calF_{N - k + 1, N}] - \bbE[m_{\infty}]}\big] \\
		&\, \le\, \lim_{N \nearrow \infty} \bigl(\bbE[\bbE[\aps{m_N - m_{\infty}} \mid \calF_{N - k + 1, N}]] + \bbE[\aps{\bbE[m_{\infty} \mid \calF_{N -  k + 1, N}] - \bbE[m_{\infty}]}]\bigr) \\
		& \,= \,\lim_{N \nearrow \infty} \bbE[\aps{m_N - m_{\infty}}] + \lim_{N \nearrow \infty} \bbE[\aps{\bbE[m_{\infty} \mid \calF_{N -  k + 1, N}] - \bbE[m_{\infty}]}].
	\end{align*}
	Since $m_N$ clearly converges to $m_{\infty}$ in  ${\rm L}^1$, we are left to prove that
the second limit in the expression above is zero.	
For a fixed $k \in \bbN$ we define
	\begin{equation*}
		\calH_N \,=\, \sigma(X_t : t \ge N - k + 1), \qquad N \ge k.
	\end{equation*}
We observe that $\calF_{N - k + 1, N} \subseteq \calH_N$ and $\calH_N$ is a decreasing family of $\sigma$-algebras. Moreover, according to Kolmogorov's $0-1$ law, for every $H\in \calH_{\infty} = \bigcap_{N \ge 1} \calH_N$, we have $\Prob(H)\in \{0,1\}$. From Levi's theorem (see \cite[Ch.\ II,  Corollary 2.4]{RY_book}) we infer that $\Prob$-a.s.
	\begin{equation}\label{conv_m_infty}
		\lim_{N \nearrow \infty}\bbE[m_{\infty} \mid \calH_N] \,=\, \bbE[m_{\infty} \mid \calH_{\infty}]\, =\, \bbE[m_{\infty}].
	\end{equation}
	Notice that by \cref{eq:LM2.1A}, $\bbE [m_\infty]=\Capa(\sB)$. 
Since the family $\{ |\bbE [m_\infty \mid \calH_N]|\}_{N\ge 1}$ is uniformly integrable, we infer that the convergence in \cref{conv_m_infty} holds also in ${\rm L}^1$, see \cite[Theorem 5.5.1]{Durrett}.
We finally obtain 
	\begin{align*}
		&\lim_{N \nearrow \infty}
		 \bbE\big[\aps{\bbE[m_{\infty} \mid \calF_{N - k + 1, N}] - \bbE[m_{\infty}]}\big] \\
		& \,=\, \lim_{N \nearrow \infty} \bbE\Big[\big\vert \bbE[\bbE[m_{\infty} \mid \calH_N] \mid \calF_{N - k + 1, N}] - \bbE[m_{\infty}]\big\vert \Big] \\
		& \,\le\, \lim_{N \nearrow \infty} \bbE\big[ \bbE[\aps{\bbE[m_{\infty} \mid \calH_N] - \bbE[m_{\infty}]} \mid \calF_{N - k + 1, N}]\big] \\
		& \,=\,  \lim_{N \nearrow \infty} \bbE\Big[\big\vert \bbE[m_{\infty} \mid \calH_N] - \bbE[m_{\infty}]\big\vert \Big] \,=\, 0,
	\end{align*}
	and the proof is finished.
\end{proof}

\begin{lemma}\label{lm:bounded}
	In the notation of the proof of  \Cref{lm:positivity},
	the sequence $\{\frac{1}{n} \sum_{k = 1}^n \bbE[Y_k^2]\}_{n \ge 1}$ is bounded.
\end{lemma}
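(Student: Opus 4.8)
The plan is to bound the partial sum $\sum_{k=1}^n\bbE[Y_k^2]$ as a whole rather than term by term. A direct application of Jensen's inequality only yields $\bbE[Y_k^2]\lesssim\bbE[\lambda(\calS_{k-1}\cap\calS[k-1,\infty))^2]$, and by \Cref{Cor:moment_bounds} the right-hand side grows like $h(k)^2$, which is \emph{not} of order $n$ after summation when $3/2<d/\alpha\le 2$; so this approach is too lossy. The remedy is to exploit the orthogonality of the martingale increments $\{Y_k\}$ together with the linear growth of $\Var(\calV_n)$ guaranteed by \cref{eq:VAR} (whose proof does not rely on the present lemma).

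Concretely, recall from the proof of \Cref{lm:positivity} that $Y_k$ is $\calF_k$-measurable with $\bbE[Y_l\mid\calF_k]=0$ for $k<l$, and that \cref{eq:key_relation} may be rewritten as
\begin{equation*}
	\sum_{k=1}^n Y_k \,=\, \langle\calV_n\rangle - \langle\bbE[\lambda(\calS_n\cap\calS[n,\infty))\mid\calF_n]\rangle .
\end{equation*}
By orthogonality and the elementary inequality $(a-b)^2\le 2a^2+2b^2$,
\begin{equation*}
	\sum_{k=1}^n\bbE[Y_k^2] \,=\, \bbE\Big[\Big(\sum_{k=1}^n Y_k\Big)^2\Big] \,\le\, 2\Var(\calV_n) + 2\Var\bigl(\bbE[\lambda(\calS_n\cap\calS[n,\infty))\mid\calF_n]\bigr).
\end{equation*}
For the second term I would use Jensen's inequality and then \Cref{Cor:moment_bounds}: since $\lambda(\calS_n\cap\calS[n,\infty))$ has the same law as $\lambda(\calS_n\cap\calS'_\infty)$ (by the Markov property and rotational invariance of $\X$, exactly as in the proof of \Cref{Cor:moment_bounds}), the case $k=2$ of \cref{eq:k-th_moment_bound} gives $\Var(\bbE[\lambda(\calS_n\cap\calS[n,\infty))\mid\calF_n])\le\bbE[\lambda(\calS_n\cap\calS'_\infty)^2]\le 8c^2 h(n)^2$ for all $n$ large enough, while for every fixed $n$ this quantity is trivially $\le\bbE[\calV_n^2]<\infty$.

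Dividing by $n$, we obtain for large $n$ that $\tfrac1n\sum_{k=1}^n\bbE[Y_k^2]\le 2\Var(\calV_n)/n+16c^2 h(n)^2/n$. The first term converges to $2\sigma^2<\infty$ by \cref{eq:VAR}, hence forms a bounded sequence, and the second tends to $0$: from \cref{eq:def_of_hd}, $h(n)^2/n$ equals $1/n$ when $d/\alpha>2$, behaves like $(\log n)^2/n$ when $d/\alpha=2$, and equals $n^{3-2d/\alpha}\to 0$ when $d/\alpha\in(3/2,2)$, using $d/\alpha>3/2$. Combined with finiteness of $\tfrac1n\sum_{k=1}^n\bbE[Y_k^2]$ for each fixed $n$, this yields the claimed boundedness. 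The main delicate point is the bookkeeping of which earlier facts may be invoked: \cref{eq:key_relation} and the martingale structure of $\{Y_k\}$ come from Step~1 of the proof of \Cref{lm:positivity}, which does not use this lemma, and linear growth of $\Var(\calV_n)$ comes from \cref{eq:VAR}; crucially one must \emph{not} appeal to \cref{eq:lim_Var=lim_Y}, whose derivation invokes precisely the present statement.
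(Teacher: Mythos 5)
Your argument is correct, and it is genuinely different from — and simpler than — the one given in the paper. Both proofs use the martingale orthogonality of the $Y_k$ and the linear variance growth from \cref{eq:VAR} (which, as you correctly note, is obtained via the subadditivity argument and does not depend on the present lemma), but the mechanism differs. The paper's proof works through the exact three-term expansion \cref{al:Var(V_n)}: because the cross term there has no favorable sign, one cannot read off a bound on $\sum_{k\le n}\bbE[Y_k^2]$ directly, so the paper argues by contradiction, assuming $\tfrac1n\sum\bbE[Y_k^2]$ diverges along a subsequence, and bootstraps via the Cauchy--Schwarz estimate on the cross term to show that the divergence would have to be faster than any power, finally contradicting the elementary bound $\bbE[Y_k^2]\lesssim k$. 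You instead use \cref{eq:key_relation} to write $\sum_{k=1}^n Y_k = \langle\calV_n\rangle - \langle\bbE[\lambda(\calS_n\cap\calS[n,\infty))\mid\calF_n]\rangle$, apply $(a-b)^2\le 2a^2+2b^2$ before taking expectations, and get directly
\begin{equation*}
\sum_{k=1}^n\bbE[Y_k^2] \,\le\, 2\Var(\calV_n) + 2\Var\bigl(\bbE[\lambda(\calS_n\cap\calS[n,\infty))\mid\calF_n]\bigr),
\end{equation*}
after which \cref{eq:VAR} handles the first term and Jensen plus \Cref{Cor:moment_bounds} (with $k=2$) handle the second. This is a clean one-page argument that avoids the contradiction/bootstrap entirely, treats all three regimes $d/\alpha>2$, $=2$, $\in(3/2,2)$ uniformly through $h(n)^2/n\to0$, and does not require the case split $\Delta\in(0,1/2)$ vs.\ $\Delta\ge1/2$ that the paper performs. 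The only thing gained by the paper's route is that it avoids invoking \cref{eq:key_relation} a second time, but since that identity is already established in Step~1 of the proof of \Cref{lm:positivity}, there is no circularity in your use of it.
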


\begin{proof} 
We set $\Delta = d/\alpha - 3/2$ and recall that it is a positive number.  We present the proof  in the case $\Delta \in (0, 1/2)$ as the proof for  $\Delta \ge 1/2$ is similar.
For $\Delta \in (0, 1/2)$, the function $h(t)$ defined in \cref{eq:def_of_hd} is given by  $h(t)=t^{1/2 - \Delta}$.
By the Cauchy-Schwarz inequality,
\begin{align*}
&\Big\vert \bbE \Big[ \langle\bbE
[\lambda(\calS_n \cap \calS[n, \infty)) \mid \calF_n]\rangle \sum_{k = 1}^n Y_k \Big] \Big\vert \\
& \,\le\, \bigl(\Var(\bbE[\lambda(\calS_n \cap \calS[n, \infty)) \mid \calF_n])\bigr)^{1/2} \Bigg(\sum_{k = 1}^n \bbE[Y_k^2]\Bigg)^{1/2} \\
&\,\le\, 2\sqrt{2}\,c\, n^{1/2 - \Delta} \OBL{\sum_{k = 1}^n \bbE[Y_k^2]}^{1/2}.
\end{align*}
This combined with \cref{al:Var(V_n)} yields
\begin{equation*}
\frac{\Var(\calV_n)}{n} \,\ge\, \frac{1}{n} \Var\bigl(\bbE[\lambda(\calS_n \cap \calS[n, \infty)) \mid \calF_n]\bigr) + \frac{1}{n} \sum_{k = 1}^n \bbE[Y_k^2] - \frac{4\sqrt{2}\,c}{n^{\Delta}} \OBL{\frac{1}{n} \sum_{k = 1}^n \bbE[Y_k^2]}^{1/2}.
\end{equation*}
We suppose, for the sake of contradiction, that there exists a subsequence $\{n_m\}_{m \ge 1}\subseteq\N$ such that
\begin{equation*}
\lim_{m \nearrow \infty} \frac{1}{n_m} \sum_{k = 1}^{n_m} \bbE[Y_k^2] \,=\, \infty.
\end{equation*}
Since 
\begin{equation*}
\lim_{n \nearrow \infty} \frac{\Var(\calV_n)}{n} \,=\, \sigma^2 \qquad \textnormal{and} \qquad \lim_{n \nearrow \infty} \frac{1}{n} \Var\bigl(\bbE[\lambda(\calS_n \cap \calS[n, \infty)) \mid \calF_n]\bigr) \,=\, 0,
\end{equation*}
it follows that
\begin{equation*}
\lim_{m \nearrow \infty} \frac{1}{n_m^{\Delta}} \OBL{\frac{1}{n_m} \sum_{k = 1}^{n_m} \bbE[Y_k^2]}^{1/2} \,=\, \infty.
\end{equation*}
We deduce that $\{\frac{1}{n_m} \sum_{k = 1}^{n_m} \bbE[Y_k^2]\}_{m\ge1}$ diverges faster to infinity than $\{n_m^{2\Delta}\}_{m\ge1}$. 
Since
\begin{equation*}
\lim_{n \nearrow \infty} \frac{\Var(\calV_n)}{n^{1 + 2\Delta}} \,=\, 0,
\end{equation*}
we can again use  \cref{al:Var(V_n)} to obtain
\begin{align*}
\frac{\Var(\calV_{n_m})}{n_m^{1 + 2\Delta}}
& \,\ge\, \frac{1}{n_m^{1 + 2\Delta}} \Var\bigl(\bbE[\lambda(\calS_{n_m} \cap \calS[n_m, \infty)) \mid \calF_{n_m}]\bigr) + \frac{1}{n_m^{1 + 2\Delta}} \sum_{k = 1}^{n_m} \bbE[Y_k^2] \\
& \ \ \ \  - \frac{4\sqrt{2}\,c}{n_m^{3\Delta}} \OBL{\frac{1}{n_m} \sum_{k = 1}^{n_m} \bbE[Y_k^2]}^{1/2}.
\end{align*}
We infer that $\{\frac{1}{n_m} \sum_{k = 1}^{n_m} \bbE[Y_k^2]\}_{m\ge1}$ grows faster to infinity than $\{n_m^{6\Delta}\}_{m\ge1}$. By iterating this procedure, we conclude that
\begin{equation}\label{eq:contradiction}
\lim_{m \nearrow \infty} \frac{1}{n_m^2} \sum_{k = 1}^{n_m} \bbE[Y_k^2] \,= \,\infty.
\end{equation}
On the other hand, from \cref{al:expr_for_Y_k} we have
\begin{align*}
\aps{Y_k}
& \,\le\, \bbE[\lambda(\calS_{k - 1} \cap \calS[k - 1, \infty)) \mid \calF_{k - 1}] + \bbE[\lambda(\calS_{k - 1} \cap \calS[k - 1, \infty)) \mid \calF_k] \\
& \ \ \ \ + \bbE[\lambda(\calS[k - 1, k] \setminus \calS[k, \infty)) \mid \calF_k] + \bbE[\lambda(\calS[k - 1, k] \setminus \calS[k, \infty))] \\
& \,\le\, \bbE[\lambda(\calS_{k - 1} \cap \calS[k - 1, \infty)) \mid \calF_{k - 1}] + \bbE[\lambda(\calS_{k - 1} \cap \calS[k - 1, \infty)) \mid \calF_k] \\
& \ \ \ \  + \lambda(\calS[k - 1, k]) + \Capa(\sB),
\end{align*}
where in the last line we used monotonicity and \cref{eq:LM2.1A}. By Jensen's inequality,
\begin{align*}
\bbE[\aps{Y_k}^2]
& \,\le \,4\bigl(2\,\bbE[\lambda(\calS_{k - 1} \cap \calS[k - 1, \infty))^2]  + \bbE[\lambda(\calS[k - 1, k])^2] + \Capa^2(\sB)\bigr) \\
& \le 64\,c^2h( k - 1)^2 + 4\,\bbE[\calV_1^2] + 4\,(\Capa(\sB))^2\, \le\, c_1 k,
\end{align*}
for a constant $c _1> 0$. This yields 
$\sum_{k = 1}^{n_m} \bbE[Y_k^2]\ \,\le\, c_1n_m^2$,
which gives a contradiction.
\end{proof}

\begin{lemma}
	\label{lm:beta_bound} 
In the notation of the proof of \Cref{lm:positivity},
	for any $\beta\in(0,1]$ there exists a constant $c(d,\alpha,\beta)>0$ such that 
\begin{align*}
	|\phi(y)-\phi(y-x)|\,\le\, c(d,\alpha,\beta)\bigl(\phi(y)+\phi(y-x)\bigr)\left(\frac{1+|x|^\beta}{|y|^\beta}\wedge 1\right),\qquad x,y\in\bbR^d.
\end{align*}
\end{lemma}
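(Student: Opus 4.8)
The plan is to work from the explicit potential‑theoretic form of $\phi$. By rotational invariance $\phi(y)=\bbP(y\in\calS_\infty)=\bbP_y(\tau_\sB<\infty)$, so \cref{eq:G*mu=P(T<infty)}, combined with the fact that the Green function of a transient rotationally invariant $\alpha$-stable process is the Riesz kernel $G(x)=C_{d,\alpha}|x|^{\alpha-d}$, gives
\begin{equation*}
	\phi(y)\,=\,\int_{\sB}G(y-z)\,\mu_\sB(\D z)\,=\,C_{d,\alpha}\int_{\sB}|y-z|^{\alpha-d}\,\mu_\sB(\D z),
\end{equation*}
with $\mu_\sB$ supported in $\sB$ and of total mass $\Capa(\sB)$. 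The trivial bound $|\phi(y)-\phi(y-x)|\le\phi(y)+\phi(y-x)$ already gives the claim whenever $\frac{1+|x|^\beta}{|y|^\beta}$ is bounded away from $0$, so first I would fix $\varepsilon_0=4^{-\beta}\in(0,1]$ and split: if $\frac{1+|x|^\beta}{|y|^\beta}\ge\varepsilon_0$ we are done with constant $\varepsilon_0^{-1}$, using $\frac{1+|x|^\beta}{|y|^\beta}\wedge 1\ge\varepsilon_0$; and if $\frac{1+|x|^\beta}{|y|^\beta}<\varepsilon_0$ then $|y|^\beta>4^\beta$ and $|x|^\beta<\varepsilon_0|y|^\beta$, that is $|y|\ge 4$ and $|x|\le|y|/4$, so the truncation $\wedge\,1$ becomes inactive.

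The remaining regime rests on two estimates. First, from the representation above and $|w|/2\le|w|-1\le|w-z|\le|w|+1\le\tfrac32|w|$ for $z\in\sB$ and $|w|\ge 2$, one obtains $\phi(w)\asymp|w|^{\alpha-d}$; only the lower bound $\phi(w)\ge c_1|w|^{\alpha-d}$, with $c_1=C_{d,\alpha}(3/2)^{\alpha-d}\Capa(\sB)>0$, is needed. Second, for $|y|\ge 4$ and $|x|\le|y|/4$ the segment $\{y-z-tx:t\in[0,1]\}$ stays at distance $\ge|y|/4$ from the origin for every $z\in\sB$, so the mean value inequality applied to $G$ (with $|\nabla G(v)|=C_{d,\alpha}(d-\alpha)|v|^{\alpha-d-1}$) gives $|G(y-z)-G(y-x-z)|\le c\,|x|\,|y|^{\alpha-d-1}$, and integrating against $\mu_\sB$ yields $|\phi(y)-\phi(y-x)|\le c_3\,|x|\,|y|^{\alpha-d-1}$. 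It then remains to recognise this as the asserted expression: using the elementary inequality $|x|\,|y|^{\beta-1}\le 1+|x|^\beta$, valid when $|x|\le|y|$, $|y|\ge 1$ and $\beta\in(0,1]$ (split on $|x|\le 1$ versus $|x|\ge 1$ and use $\beta\le 1$),
\begin{equation*}
	|\phi(y)-\phi(y-x)|\,\le\,c_3\,|x|\,|y|^{\alpha-d-1}\,\le\,c_3\,(1+|x|^\beta)\,|y|^{\alpha-d-\beta}\,=\,c_3\,|y|^{\alpha-d}\,\frac{1+|x|^\beta}{|y|^\beta}\,\le\,\frac{c_3}{c_1}\,\phi(y)\,\frac{1+|x|^\beta}{|y|^\beta},
\end{equation*}
which is stronger than the claimed bound. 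Taking $c(d,\alpha,\beta)=\max\{\varepsilon_0^{-1},c_3/c_1\}$ finishes the proof.

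Everything here is elementary once the representation of $\phi$ is in hand, so I do not expect a serious obstacle; the two points deserving care are (a) recording the exact Riesz form of $G$ and the identity $\phi=G*\mu_\sB$ — both standard for rotationally invariant stable processes with $d>\alpha$ — and (b) the regime reduction, which is precisely what forces $|x|$ to be small relative to $|y|$ and $|y|$ to stay away from $1$, hence ensures the segment over which the mean value theorem is applied avoids a fixed neighbourhood of the origin; without it the factor $|y-z-tx|^{\alpha-d-1}$ would be uncontrolled. An alternative to the gradient estimate would be to bound $|w|^{\alpha-d}-|w-x|^{\alpha-d}$ directly by splitting on whether $|x|\le|w|/2$, but the mean value route is cleaner in the regime we have isolated.
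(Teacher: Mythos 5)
Your proof is correct, and it takes a genuinely different route from the paper's. The paper keeps the regime split as $|y|>1+|x|$ versus not, works directly at the level of the Riesz-kernel integrand, reduces to a one-dimensional problem by replacing $x$ with a point $x_0$ on the line through $y-w$, and then runs a case analysis on the concavity/convexity of $r\mapsto r^{d-\alpha}$ (depending on whether $d-\alpha\le 1$ or $>1$) and on the sign of $\varrho$. Your argument instead uses a coarser but cleaner split — the trivial bound disposes of everything except the regime $|y|\ge 4$, $|x|\le|y|/4$ — and in that regime the mean-value inequality applied to $G$ along the segment $\{y-z-tx:t\in[0,1]\}$ (kept a distance $\ge|y|/2$ from the origin) immediately yields $|\phi(y)-\phi(y-x)|\le c|x|\,|y|^{\alpha-d-1}$; the elementary bound $|x|\,|y|^{\beta-1}\le 1+|x|^\beta$ and the lower estimate $\phi(y)\gtrsim|y|^{\alpha-d}$ then close the argument. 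The two approaches buy different things: the paper's pointwise kernel comparison avoids invoking a lower bound on $\phi$ and tracks the ratio $\frac{|K(y-w)-K(y-w-x)|}{K(y-w)+K(y-w-x)}$ uniformly (at the cost of a case-heavy convexity argument), whereas your route is shorter and more robust — it would apply verbatim to any radial kernel with comparable decay and a $|v|^{\alpha-d-1}$ gradient bound — but yields a larger constant and requires the gradient estimate and the Green-function lower bound as inputs. Both are complete proofs of the lemma.
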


\begin{proof} 
Recall that $\phi(y) = \Prob (y\in \calS_\infty)$. This yields
 \begin{equation}\label{eq:triangle}
 |\phi(y)-\phi(y-x)|\,\le\, \phi(y)+\phi(y-x),\qquad x,y\in\bbR^d.
 \end{equation}
 To establish the second non-trivial part of the claimed inequality, that is, for $|y|^\beta>1+|x|^\beta$, we first observe that by rotational invariance of $\X$ it holds $\phi(y) = \Prob_y (\tau_{\sB}<\infty)$. Moreover, by \cref{eq:G*mu=P(T<infty)}, 
\begin{align*}
\phi(y)\,=\, a_{d,\alpha}\int_{\sB}|y-w|^{\alpha-d}\bigl(1-|w|^2\bigr)^{-\alpha/2}\,\D w,\qquad y\in\bbR^d,
\end{align*}
where 
\begin{align*}
a_{d,\alpha}\,=\, \frac{\sin \frac{\pi\alpha}{2} \Gamma ((d-\alpha)/2)\Gamma (d/2)}{2^\alpha \pi^{d+1}\Gamma (\alpha /2)},
\end{align*}
see e.g.\ \cite{Takeuchi}.
We fix $\beta\in(0,1]$ and $x\in\bbR^d$. For any $y\in\sB^c(0,1+|x|)$ we have
\begin{align*}
|y-w|\,\ge\,|y|-|w|\,\ge\,|y|-1\,>\,|x|,\qquad w\in\sB.
\end{align*}
There exists $x_0\in \sB (0,|y-w|)$ lying on the line going through the origin, determined by the vector $y-w$, and such that 
	\begin{align*}\frac{\bigl||y-w|^{\alpha-d}-|y-w-x|^{\alpha-d}\bigr|}{|y-w|^{\alpha-d}+|y-w-x|^{\alpha-d}}\frac{|y-w|^\beta}{1+|x|^\beta}&\,=\,\frac{\bigl||y-w|^{d-\alpha}-|y-w-x|^{d-\alpha}\bigr|}{|y-w|^{d-\alpha}+|y-w-x|^{d-\alpha}}\frac{|y-w|^\beta}{1+|x|^\beta}\\
	&\,\le\,\frac{\bigl||y-w|^{d-\alpha}-|y-w-x_0|^{d-\alpha}\bigr|}{|y-w|^{d-\alpha}+|y-w-x_0|^{d-\alpha}}\frac{|y-w|^\beta}{1+|x_0|^\beta}.\end{align*} 
	Since $x_0$ is necessarily of the form 
	$x_0=\frac{y-w}{|y-w|}\varrho$, for some $\varrho\in[-|y-w|,|y-w|]$, we have
	$$\frac{\bigl||y-w|^{\alpha-d}-|y-w-x|^{\alpha-d}\bigr|}{|y-w|^{\alpha-d}+|y-w-x|^{\alpha-d}}\frac{|y-w|^\beta}{1+|x|^\beta}\,\le\,\frac{\bigl||y-w|^{d-\alpha}-\bigl(|y-w|-\varrho\bigr)^{d-\alpha}\bigr|}{|y-w|^{d-\alpha}+\bigl(|y-w|-\varrho\bigr)^{d-\alpha}}\frac{|y-w|^\beta}{1+|\varrho|^\beta}.
$$
We  investigate the two following cases.

\smallskip

\noindent
\textit{Case 1}.
We first assume that $d-\alpha\le1$. 
If $\varrho\in[0,|y-w|/2]$ then, by the concavity of the function $r\mapsto r^{d-\alpha}$, we obtain
\begin{align*}\frac{\bigl||y-w|^{\alpha-d}-|y-w-x|^{\alpha-d}\bigr|}{|y-w|^{\alpha-d}+|y-w-x|^{\alpha-d}}\frac{|y-w|^\beta}{1+|x|^\beta}&\,\le\,
\frac{(d-\alpha)\varrho\bigl(|y-w|-\varrho\bigr)^{d-\alpha-1}}{|y-w|^{d-\alpha}+\bigl(|y-w|-\varrho\bigr)^{d-\alpha}}\frac{|y-w|^\beta}{1+\varrho^\beta}\\
&\,\le\, (d-\alpha)\frac{\varrho}{|y-w|-\varrho}\frac{|y-w|^\beta}{1+\varrho^\beta}\\
&\,\le\,2(d-\alpha)\frac{\varrho}{|y-w|}\frac{|y-w|^\beta}{1+\varrho^\beta}\\
&\,\le\,2(d-\alpha)\frac{\varrho^\beta}{|y-w|^\beta}\frac{|y-w|^\beta}{1+\varrho^\beta}\\
&\,\le\,2(d-\alpha).
\end{align*}	
If 	$\varrho\in[|y-w|/2,|y-w|]$, then
\begin{align*}\frac{\bigl||y-w|^{\alpha-d}-|y-w-x|^{\alpha-d}\bigr|}{|y-w|^{\alpha-d}+|y-w-x|^{\alpha-d}}\frac{|y-w|^\beta}{1+|x|^\beta}&\,\le\,\frac{|y-w|^\beta}{1+2^{-\beta}|y-w|^{\beta}} \,\le\, 2^{\beta}
\end{align*}
If $\varrho\in[-|y-w|,0]$ then we again use the concavity argument which yields
\begin{align*}
\frac{\bigl||y-w|^{\alpha-d}-|y-w-x|^{\alpha-d}\bigr|}{|y-w|^{\alpha-d}+|y-w-x|^{\alpha-d}}\frac{|y-w|^\beta}{1+|x|^\beta}&\,\le\,
\frac{(d-\alpha)\,|\varrho|\, |y-w|^{d-\alpha-1}}{|y-w|^{d-\alpha}+\bigl(|y-w|-\varrho\bigr)^{d-\alpha}}\frac{|y-w|^\beta}{1+|\varrho|^\beta}\\
&\,\le\, (d-\alpha)\frac{|\varrho|}{|y-w|}\frac{|y-w|^\beta}{1+|\varrho|^\beta}\\
&\,\le\,(d-\alpha)\frac{|\varrho|^\beta}{|y-w|^\beta}\frac{|y-w|^\beta}{1+|\varrho|^\beta}\\
&\,\le\,d-\alpha.
\end{align*}	

\noindent
\textit{Case 2}. Assume that $d-\alpha>1$. 
If $\varrho\in[0,|y-w|]$ then the function $r\mapsto r^{d-\alpha}$ is convex and we obtain
\begin{align*}
\frac{\bigl||y-w|^{\alpha-d}-|y-w-x|^{\alpha-d}\bigr|}{|y-w|^{\alpha-d}+|y-w-x|^{\alpha-d}}\frac{|y-w|^\beta}{1+|x|^\beta}&\,\le\,
\frac{(d-\alpha)\,\varrho\, |y-w|^{d-\alpha-1}}{|y-w|^{d-\alpha}+\bigl(|y-w|-\varrho\bigr)^{d-\alpha}}\frac{|y-w|^\beta}{1+\varrho^\beta}\\
&\,\le\, (d-\alpha)\frac{\varrho}{|y-w|}\frac{|y-w|^\beta}{1+\varrho^\beta}\\
&\,\le\,(d-\alpha)\frac{\varrho^\beta}{|y-w|^\beta}\frac{|y-w|^\beta}{1+\varrho^\beta}\\
&\,\le\,d-\alpha.
\end{align*}
If $\varrho\in[-|y-w|,0]$ then again in view of the convexity 
we have
\begin{align*}
\frac{\bigl||y-w|^{\alpha-d}-|y-w-x|^{\alpha-d}\bigr|}{|y-w|^{\alpha-d}+|y-w-x|^{\alpha-d}}\frac{|y-w|^\beta}{1+|x|^\beta}&\,\le\,
\frac{(d-\alpha)\,|\varrho| \bigl(|y-w|+|\varrho|\bigr)^{d-\alpha-1}}{|y-w|^{d-\alpha}+\bigl(|y-w|+|\varrho|\bigr)^{d-\alpha}}\frac{|y-w|^\beta}{1+|\varrho|^\beta}\\
&\,\le\, (d-\alpha)\frac{|\varrho|}{|y-w|+|\varrho|}\frac{|y-w|^\beta}{1+|\varrho|^\beta}\\
&\,\le\, (d-\alpha)\frac{|\varrho|}{|y-w|}\frac{|y-w|^\beta}{1+|\varrho|^\beta}\\
&\,\le\,(d-\alpha)\frac{|\varrho|^\beta}{|y-w|^\beta}\frac{|y-w|^\beta}{1+|\varrho|^\beta}\\
&\,\le\,d-\alpha.
\end{align*}
Finally, for $y\in\sB^c(0,1+|x|)\cap\sB^c(0,2)$ we obtain
\begin{equation} \label{eq:beta}
\begin{aligned}
\frac{\bigl||y-w|^{\alpha-d}-|y-w-x|^{\alpha-d}\bigr|}{|y-w|^{\alpha-d}+|y-w-x|^{\alpha-d}}&
\frac{|y|^\beta}{1+|x|^\beta}\\
&\,=\,\frac{\bigl||y-w|^{\alpha-d}-|y-w-x|^{\alpha-d}\bigr|}{|y-w|^{\alpha-d}+|y-w-x|^{\alpha-d}}\frac{|y-w|^\beta}{1+|x|^\beta}\frac{|y|^\beta}{|y-w|^\beta}\\
&\,\le\,2^{1+\beta}(d-\alpha)\vee 2^{2\beta}.
\end{aligned}
\end{equation}
On the other hand, if $y\in\sB^c(0,1+|x|)\cap\sB(0,2)$ then $x\in\sB$ and  
\begin{equation}\label{eq:trivial}
\frac{1+|x|^\beta}{|y|^\beta}\,\ge\,2^{-\beta}.
\end{equation}
\Cref{eq:triangle,eq:beta,eq:trivial} imply the result.
	\end{proof}

\begin{lemma}\label{lm:L1}
In the notation of the proof of \Cref{lm:positivity},
it holds that $$\int_{\bbR^d}\int_{\bbR^d}p(1,x)\, \phi(y)\,\bigl|\phi(y)-\phi(y-x)\bigr|\,\D y\,\D x\,<\,\infty.$$
\end{lemma}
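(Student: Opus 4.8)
The plan is to use the pointwise estimate of \Cref{lm:beta_bound} to split the double integral into two pieces and then estimate each one by brute force, relying on two standard facts: the decay bound $\phi(y)\le c_\phi(1+|y|)^{\alpha-d}$ for all $y\in\bbR^d$ (which, for $\alpha<2$, is immediate from the integral representation of $\phi$ recalled in the proof of \Cref{lm:beta_bound}, and is classical for $\alpha=2$), and the moment bound $\int_{\bbR^d}p(1,x)\,|x|^{\gamma}\,\D x<\infty$, valid for every $\gamma<\alpha$ (and, when $\alpha=2$, for every $\gamma$). First I would fix $\beta\in(0,1]$ with $2\alpha-d<\beta<\alpha$, which is possible since $d/\alpha>3/2$ forces $2\alpha-d<\alpha/2<\alpha$, and automatically $\beta<d$. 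Since $\phi\ge0$, \Cref{lm:beta_bound} yields
\begin{equation*}
	\phi(y)\,\bigl|\phi(y)-\phi(y-x)\bigr|\,\le\, c(d,\alpha,\beta)\bigl(\phi(y)^2+\phi(y)\,\phi(y-x)\bigr)\Bigl(\tfrac{1+|x|^\beta}{|y|^\beta}\wedge1\Bigr),
\end{equation*}
so it suffices to establish the finiteness of
\begin{align*}
	A&\,=\,\int_{\bbR^d}\int_{\bbR^d}p(1,x)\,\phi(y)^2\Bigl(\tfrac{1+|x|^\beta}{|y|^\beta}\wedge1\Bigr)\,\D y\,\D x,\\
	B&\,=\,\int_{\bbR^d}\int_{\bbR^d}p(1,x)\,\phi(y)\,\phi(y-x)\Bigl(\tfrac{1+|x|^\beta}{|y|^\beta}\wedge1\Bigr)\,\D y\,\D x.
\end{align*}

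For $A$ I would discard the truncation, bound $\tfrac{1+|x|^\beta}{|y|^\beta}\wedge1\le\tfrac{1+|x|^\beta}{|y|^\beta}$, and factorize by Tonelli, getting $A\le\bigl(\int p(1,x)(1+|x|^\beta)\,\D x\bigr)\bigl(\int\phi(y)^2|y|^{-\beta}\,\D y\bigr)$; the first factor is finite because $\beta<\alpha$, and the second because $\phi\le1$ gives integrability near $0$ (this uses $\beta<d$) while $\phi(y)\le c_\phi(1+|y|)^{\alpha-d}$ gives integrability at infinity (this uses $2\alpha-d-\beta<0$). For $B$ I would split the domain of integration into $\{|y|\ge2|x|\}$ and $\{|y|<2|x|\}$. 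On the first set $|y-x|\ge|y|/2$, hence $\phi(y)\phi(y-x)\le c_\phi^2\,2^{d-\alpha}(1+|y|)^{2(\alpha-d)}$, and the same factorization as for $A$ (again discarding the truncation) bounds this part by a finite quantity. On the second set I would bound $\tfrac{1+|x|^\beta}{|y|^\beta}\wedge1$ by $1$ and use $\phi(y)\phi(y-x)\le c_\phi^2(1+|y|)^{\alpha-d}(1+|y-x|)^{\alpha-d}$; splitting once more according to whether $|y|$ or $|y-x|$ is the smaller of the two and using that the exponent $\alpha-d$ is negative, the inner $y$-integral is dominated by $2\int_{|z|<2|x|}(1+|z|)^{2(\alpha-d)}\,\D z$, which — since $(2\alpha-d)\vee0<\alpha/2$, with only a logarithm appearing when $d=2\alpha$ — is at most $C(1+|x|^{\alpha/2})$. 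Integrating this against $p(1,x)$ is finite because $\alpha/2<\alpha$, and the proof is complete.

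I expect the term $B$ on $\{|y|<2|x|\}$ to be the main obstacle: this is exactly the regime in which the truncation in \Cref{lm:beta_bound} is active, so no cancellation between $\phi(y)$ and $\phi(y-x)$ can be exploited, and indeed the crude integral $\int_{\bbR^d}\phi(y)\phi(y-x)\,\D y=\bbE[\lambda(\calS_\infty\cap\calS[1,\infty))]$ is infinite as soon as $d/\alpha\le2$. The remedy is the observation that on this region $|x|$ is comparable to (or larger than) $|y|$, so the truncated $y$-integral grows only polynomially in $|x|$ with exponent $(2\alpha-d)\vee0$, strictly below $\alpha/2$; this is precisely the point where the hypothesis $d/\alpha>3/2$ enters (it also guarantees that the admissible range $(2\alpha-d,\alpha)\cap(0,1]$ for $\beta$ is non-empty). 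A minor additional point is that the integral representation of $\phi$ degenerates at $\alpha=2$, but there the bound $\phi(y)\le c_\phi(1+|y|)^{\alpha-d}$ is classical and the moment bound holds for every exponent, so the argument goes through without change.
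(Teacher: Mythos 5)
Your proof is correct, and it takes a genuinely different route from the paper. Both arguments hinge on the same two ingredients — the cancellation bound of \Cref{lm:beta_bound} and the decay $\phi(w)\lesssim |w|^{\alpha-d}$ (the paper cites \cite[Lemma 2.5]{Vondra}, which gives the same bound you derive from the integral representation) — but the decompositions differ. The paper first partitions the $y$-integral into the three regions $\sB^c(0,1+|x|)$, $\sB(0,1+|x|)\cap\sB$, and $\sB(0,1+|x|)\cap\sB^c$, fixes $\beta=\alpha/2$ throughout, and, for the last region, additionally splits in $x$ and invokes the pointwise heat-kernel upper bound $p(1,x)\lesssim|x|^{-d-\alpha}$ for $|x|$ large, followed by a further three-way split in $y$. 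You instead expand $\phi(y)\,\bigl(\phi(y)+\phi(y-x)\bigr)$ into the two terms $A$ and $B$, choose a generic $\beta\in(2\alpha-d,\alpha)\cap(0,1]$, dispose of $A$ and the half of $B$ with $|y|\ge 2|x|$ by a single Tonelli factorization, and treat the remaining diagonal regime $\{|y|<2|x|\}$ by the crude bound $\int_{|z|<2|x|}(1+|z|)^{2(\alpha-d)}\,\D z\lesssim 1+|x|^{\alpha/2}$, which only uses the fractional-moment bound $\int p(1,x)|x|^{\gamma}\,\D x<\infty$ for $\gamma<\alpha$. The net effect is that your argument is shorter, requires only the moment bound on $p(1,\cdot)$ rather than the off-diagonal pointwise estimate, and makes the role of the hypothesis $d/\alpha>3/2$ transparent (it ensures both the nonemptiness of the admissible $\beta$-range and the strict inequality $(2\alpha-d)\vee 0<\alpha/2$). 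The only minor caveat is your reliance on the integral representation of $\phi$ to justify the decay bound: as you note, the constant $a_{d,\alpha}$ vanishes at $\alpha=2$, so for that endpoint one should cite the exact Newtonian formula $\phi(y)=|y|^{2-d}$ for $|y|\ge 1$ (or simply the same reference used in the paper), which you correctly flag.
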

\begin{proof}
We split the integral into three parts
\begin{equation}\label{eq:int}
\begin{aligned}
&\int_{\bbR^d}\int_{\bbR^d}p(1,x)\, \phi(y)\,\bigl|\phi(y)-\phi(y-x)\bigr|\,\D y\,\D x\\&\,=\,\int_{\bbR^d}\int_{\sB^c(0,1+|x|)}p(1,x)\, \phi(y)\,\bigl|\phi(y)-\phi(y-x)\bigr|\,\D y\,\D x\\
&\ \ \ \ +\int_{\bbR^d}\int_{\sB(0,1+|x|)\cap\sB}p(1,x)\, \phi(y)\,\bigl|\phi(y)-\phi(y-x)\bigr|\,\D y\,\D x\\
&\ \ \ \ +\int_{\bbR^d}\int_{\sB(0,1+|x|)\cap\sB^c}p(1,x)\, \phi(y)\,\bigl|\phi(y)-\phi(y-x)\bigr|\,\D y\,\D x.
\end{aligned}
\end{equation}  
	According to \Cref{lm:beta_bound}, by setting $\beta=\alpha/2$ we obtain
	$$|\phi(y)-\phi(y-x)|\,\le\, c_1\bigl(\phi(y)+\phi(y-x)\bigr)\frac{1+|x|^{\alpha/2}}{|y|^{\alpha/2}},\qquad y\in\sB^c(0,1+|x|),$$
	where $c_1=c(d,\alpha,\beta)$.
	By  \cite[Lemma 2.5]{Vondra}, there exists a constant $c_2=c_2(d,\alpha)>0$ such that $	\phi(w)\leq c_2|w|^{\alpha -d}$,  for any $w\in\sB^c$.
Thus,
\begin{align*}
&\int_{\bbR^d}\int_{\sB^c(0,1+|x|)}p(1,x)\, \phi(y)\,\bigl|\phi(y)-\phi(y-x)\bigr|\,\D y\,\D x\\
&\,\le\, c_1c_2^2\int_{\bbR^d}\int_{\sB^c(0,1+|x|)}p(1,x)\frac{1}{|y|^{d-\alpha}}\left(\frac{1}{|y|^{d-\alpha}}+\frac{1}{|y-x|^{d-\alpha}}\right)\frac{1+|x|^{\alpha/2}}{|y|^{\alpha/2}}\,\D y\,\D x\\
&\,\le\,2^{d-\alpha/2}(1+2^{d-\alpha})\,c_1c_2^2 \int_{\bbR^d}\int_{\sB^c(0,1+|x|)}p(1,x)\bigl(1+|x|^{\alpha/2}\bigr)
\frac{1}{|y-x|^{2d-3\alpha/2}}\,\D y\,\D x\\
&\,\le\, 2^{d-\alpha/2}(1+2^{d-\alpha})\,c_1c_2^2 \int_{\bbR^d}p(1,x)\bigl(1+|x|^{\alpha/2}\bigr)\,\D x\,\int_{\sB^c}
\frac{1}{|z|^{2d-3\alpha/2}}\,\D z\\
&\,=\, 2^{d-\alpha/2}(1+2^{d-\alpha})\,c_1c_2^2 \, d\, \lambda(\sB)\int_{\bbR^d}p(1,x)\bigl(1+|x|^{\alpha/2}\bigr)\,\D x\,\int_{1}^\infty
\frac{1}{r^{d-3\alpha/2+1}}\,\D r,
\end{align*}
where in the second step we used the fact that $|y-x|\le|y|+|x|\le|y|+|y|-1\le 2|y|$. The last integral is finite as $d/\alpha>3/2$ and $\X$ has finite $\beta$-moment for any $\beta<\alpha$ (see \cite[Example 25.10]{Sato-Book-1999}).

 For the second integral on the right-hand side of \cref{eq:int} we observe that
 $$\int_{\bbR^d}\int_{\sB(0,1+|x|)\cap\sB}p(1,x)\, \phi(y)\,\bigl|\phi(y)-\phi(y-x)\bigr|\,\D y\,\D x\,\le\,2\,\lambda(\sB).$$
 
The third integral on the right-hand side of \cref{eq:int} is most demanding. 
We start by splitting this integral into two parts
 \begin{equation}\label{eq:int2}
 \begin{aligned}
 &\int_{\bbR^d}\int_{\sB(0,1+|x|)\cap\sB^c}p(1,x)\, \phi(y)\,\bigl|\phi(y)-\phi(y-x)\bigr|\,\D y\,\D x\\
 &\,=\,\int_{\sB(0,\Lambda)}\int_{\sB(0,1+|x|)\cap\sB^c}p(1,x)\, \phi(y)\,\bigl|\phi(y)-\phi(y-x)\bigr|\,\D y\,\D x\\
 &\ \ \ \ +\int_{\sB^c(0,\Lambda)}\int_{\sB(0,1+|x|)\cap\sB^c}p(1,x)\, \phi(y)\,\bigl|\phi(y)-\phi(y-x)\bigr|\,\D y\,\D x.\end{aligned}
 \end{equation} 
 For the first integral in this decomposition we have  
 \begin{align*}
 &\int_{\sB(0,\Lambda)}\int_{\sB(0,1+|x|)\cap\sB^c}p(1,x)\, \phi(y)\,\bigl|\phi(y)-\phi(y-x)\bigr|\,\D y\,\D x\\
 &\,\le\, 2\, c_2\int_{\sB(0,\Lambda)}\int_{\sB(0,1+|x|)\cap\sB^c}p(1,x)\, \frac{1}{|y|^{d-\alpha}}\,\D y\,\D x\\
& \,\le\, 2\, c_2\int_{\sB(0,1+\Lambda)\cap\sB^c} \frac{1}{|y|^{d-\alpha}}\,\D y\\
&\,\le\, 2\, c_2\,\lambda(\sB(0,1+\Lambda)).
 \end{align*}
The second integral on the right-hand side of \cref{eq:int2} we decompose further as follows
\begin{equation}\label{eq:int3}
\begin{aligned}
&\int_{\sB^c(0,\Lambda)}\int_{\sB(0,1+|x|)\cap\sB^c}p(1,x)\, \phi(y)\,\bigl|\phi(y)-\phi(y-x)\bigr|\,\D y\,\D x\\
&\,=\, \int_{\sB^c(0,\Lambda)}\int_{\sB(0,1+|x|)\cap\sB^c\cap\{z:|z-x|>|z|\}}p(1,x)\, \phi(y)\,\bigl|\phi(y)-\phi(y-x)\bigr|\,\D y\,\D x\\
&\ \ \ \ + \int_{\sB^c(0,\Lambda)}\int_{\sB(0,1+|x|)\cap\sB^c\cap\{z:1\le|z-x|\le|z|\}}p(1,x)\, \phi(y)\,\bigl|\phi(y)-\phi(y-x)\bigr|\,\D y\,\D x\\
&\ \ \ \ + \int_{\sB^c(0,\Lambda)}\int_{\sB(0,1+|x|)\cap\sB^c\cap\{z:|z-x|<1\}}p(1,x)\, \phi(y)\,\bigl|\phi(y)-\phi(y-x)\bigr|\,\D y\,\D x.
\end{aligned}
\end{equation}
It is well-known that for any $\Lambda>1$ large enough there is $c_3=c_3(d,\alpha,\Lambda)>0$ such that 
$$p(1,x)\,\le\,\frac{c_3}{|x|^{d+\alpha}},\qquad x\in\sB^c(0,\Lambda).$$
We set $A_1 = \sB(0,1+|x|)\cap\sB^c\cap\{z:|z-x|>|z|\}$ and estimate the first integral on the right-hand side of \cref{eq:int3} as follows
 \begin{align*}
&\int_{\sB^c(0,\Lambda)}\int_{A_1}p(1,x)\, \phi(y)\,\bigl|\phi(y)-\phi(y-x)\bigr|\,\D y\,\D x\\
&\,\le\, c_2^2c_3\int_{\sB^c(0,\Lambda)}\int_{A_1}\frac{1}{|x|^{d+\alpha}}\, \frac{1}{|y|^{d-\alpha}}
\left(\frac{1}{|y|^{d-\alpha}}+\frac{1}{|y-x|^{d-\alpha}}\right)\,\D y\,\D x\\
&\,\le\, 2\,c_2^2c_3\int_{\sB^c(0,\Lambda)}\int_{A_1}\frac{1}{|x|^{d+\alpha}}\, \frac{1}{|y|^{2d-2\alpha}}\,\D y\,\D x\\
&\,\le\, 2\,c_2^2c_3\int_{\sB^c(0,\Lambda)}\int_{\sB(0,1+|x|)\cap\sB^c}\frac{1}{|x|^{d+\alpha}}\, \frac{1}{|y|^{2d-2\alpha}}\,\D y\,\D x\\
&\,\le\, 2\,c_2^2c_3\,d\,\lambda(\sB)\int_{\sB^c(0,\Lambda)}\frac{1}{|x|^{d+\alpha}}\int_{1}^{1+|x|} \frac{1}{r^{d-2\alpha+1}}\,\D r\,\D x.
\end{align*}
The last integral is  finite as $d/\alpha>3/2$.

We set $A_2 = \sB(0,1+|x|)\cap\sB^c\cap\{z:1\le|z-x|\le|z|\}$ and for the  second integral on the right-hand side of \cref{eq:int3} we have
\begin{align*}
 &\int_{\sB^c(0,\Lambda)}\int_{A_2}p(1,x)\, \phi(y)\,\bigl|\phi(y)-\phi(y-x)\bigr|\,\D y\,\D x\\
 &\,\le\, c_2^2c_3\int_{\sB^c(0,\Lambda)}\int_{A_2}\frac{1}{|x|^{d+\alpha}}\, \frac{1}{|y|^{d-\alpha}}
\left(\frac{1}{|y|^{d-\alpha}}+\frac{1}{|y-x|^{d-\alpha}}\right)\,\D y\,\D x\\
  &\,\le\, 2^{d+\alpha+1}c_2^2c_3\int_{\sB^c(0,\Lambda)}\int_{A_2}\frac{1}{|y|^{2d}}\, \frac{1}{|y-x|^{d-\alpha}}\,\D y\,\D x\\
  &\,\le\, 2^{d+\alpha+1}c_2^2c_3\int_{\sB^c}\int_{\{z:1\le|z-x|\le|z|\}}\frac{1}{|y|^{2d}}\, \frac{1}{|y-x|^{d-\alpha}}\,\D x\,\D y\\
  &\,\le\, 2^{d+\alpha+1}c_2^2c_3\int_{\sB^c}\frac{1}{|y|^{2d}}\int_{\sB(0,|y|)\cap\sB^c}\, \frac{1}{|z|^{d-\alpha}}\,\D z\,\D y\\
  &\,\le\, 2^{d+\alpha+1}c_2^2c_3\, d\,\alpha^{-1}\lambda(\sB)\int_{\sB^c}\frac{1}{|y|^{2d-\alpha}}\D y\\
  &\,=\, 2^{d+\alpha+1}c_2^2c_3\, d^2\alpha^{-1}\lambda(\sB)^2\int_{1}^\infty\frac{1}{r^{d-\alpha+1}}\D r,
 \end{align*}
 where in the second step we used the fact that $|y|\le 2|x|.$
 
 Finally, we set $A_3= \sB(0,1+|x|)\cap\sB^c\cap\{z:|z-x|<1\}$ and  for the third integral on the right-hand side of \cref{eq:int3} we proceed as follows
  \begin{align*}
& \int_{\sB^c(0,\Lambda)}\int_{A_3}p(1,x)\, \phi(y)\,\bigl|\phi(y)-\phi(y-x)\bigr|\,\D y\,\D x\\
 &\,\le\, 2\, c_2\,c_3\int_{\sB^c(0,\Lambda)}\int_{A_3}\frac{1}{|x|^{d+\alpha}}\, \frac{1}{|y|^{d-\alpha}}\,\D y\,\D x\\
  &\,\le\, 2^{1+d-\alpha} c_2\,c_3\int_{\sB^c(0,\Lambda)}\int_{\{z:|z-x|<1\}} \frac{1}{|x|^{2d}}\,\D y\,\D x\\
 &\,\le\, 2^{1+d-\alpha} c_2\,c_3\,\lambda(\sB)\int_{\Lambda}^\infty \frac{1}{r^{d+1}}\,\D r,
 \end{align*}
 where in the second step we used the fact that $|x|\le |y-x|+|y|\le 1+|y|\le2|y|.$ 
	\end{proof}

\begin{lemma}\label{lm:bound}
In the notation of the proof of \Cref{lm:positivity},
	it holds that 
	\begin{align*}
	\sup_{x\in \sB}\int_{\bbR^d}\phi(y)|\phi(y)-\phi(y-x)|\,\D y\,<\,\infty.
	\end{align*}
		\end{lemma}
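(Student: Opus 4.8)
The plan is to split the integral $\int_{\bbR^d}\phi(y)\,\aps{\phi(y)-\phi(y-x)}\,\D y$ according to the size of $\aps{y}$ — over $\sB$, over $\sB(0,2)\setminus\sB$, and over $\sB^c(0,2)$ — and to bound each piece by a constant that does not depend on $x\in\sB$. By rotational invariance $\phi(y)=\bbP_y(\tau_{\sB}<\infty)$, so $\phi\le 1$ everywhere and $\phi(y)=1$ for $y\in\sB$ (starting inside $\sB$ gives $\tau_{\sB}=0$). Hence on $\sB$ the integrand equals $\aps{1-\phi(y-x)}\le 1$, contributing at most $\lambda(\sB)$; on $\sB(0,2)\setminus\sB$ we bound $\phi(y)\aps{\phi(y)-\phi(y-x)}\le 2$, contributing at most $2\lambda(\sB(0,2))$. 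Both bounds are uniform in $x$.

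The tail region $\sB^c(0,2)$ carries the actual work. For $x\in\sB$ and $\aps{y}>2$ one has $\aps{y-x}\ge\aps{y}-1\ge\aps{y}/2>1$, so both $y$ and $y-x$ lie in $\sB^c$ and the estimate $\phi(w)\le c_2\aps{w}^{\alpha-d}$ from \cite[Lemma 2.5]{Vondra} applies to each; combined with $\aps{y-x}\ge\aps{y}/2$ this gives $\phi(y)+\phi(y-x)\le c_2(1+2^{d-\alpha})\aps{y}^{\alpha-d}$ and $\phi(y)\le c_2\aps{y}^{\alpha-d}$. Applying \Cref{lm:beta_bound} with $\beta=\alpha/2$ (admissible since $\alpha\le 2$) and using $\aps{x}\le 1$, so that $(1+\aps{x}^{\alpha/2})/\aps{y}^{\alpha/2}\le 2\aps{y}^{-\alpha/2}$, we obtain
\begin{equation*}
\phi(y)\,\aps{\phi(y)-\phi(y-x)}\,\le\,2c_1c_2^2(1+2^{d-\alpha})\,\aps{y}^{2(\alpha-d)-\alpha/2},\qquad \aps{y}>2,\ x\in\sB,
\end{equation*}
where $c_1=c(d,\alpha,\alpha/2)$. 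In polar coordinates $\int_{\sB^c(0,2)}\aps{y}^{2(\alpha-d)-\alpha/2}\,\D y$ is a constant multiple of $\int_2^\infty r^{3\alpha/2-d-1}\,\D r$, which is finite because $d/\alpha>3/2$. Adding the three contributions gives the claimed uniform bound.

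The one genuinely delicate point is the tail estimate: the crude triangle inequality only yields $\phi(y)\aps{\phi(y)-\phi(y-x)}\le\phi(y)(\phi(y)+\phi(y-x))\le c_2^2(1+2^{d-\alpha})\aps{y}^{2(\alpha-d)}$, which is integrable at infinity only when $d>2\alpha$; it is precisely the extra decay factor $\aps{y}^{-\alpha/2}$ furnished by \Cref{lm:beta_bound} that pushes integrability down to the full range $d/\alpha>3/2$. Everything else is elementary, and all constants are manifestly independent of $x\in\sB$.
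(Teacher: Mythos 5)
Your proof is correct and follows essentially the same route as the paper: a split into a bounded region (handled by $\phi\le 1$) and a tail, with the tail controlled by \Cref{lm:beta_bound} (with $\beta=\alpha/2$) together with the decay $\phi(w)\le c_2|w|^{\alpha-d}$ from \cite[Lemma 2.5]{Vondra}, yielding integrability exactly when $d/\alpha>3/2$. The only cosmetic differences are your three-way split (the paper uses $\sB(0,1+|x|)$ versus its complement) and your bounding $\phi(y-x)$ in terms of $|y|$ via $|y-x|\ge|y|/2$ rather than the paper's change of variables $z=y-x$; the algebra and the conclusion are the same.
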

	\begin{proof}
We split the integral as follows
 \begin{align*}
 \int_{\bbR^d}\phi(y)|\phi(y)-\phi(y-x)|\,\D y\,=\,&\int_{\sB(0,1+|x|)}\phi(y)|\phi(y)-\phi(y-x)|\,\D y\\&+\int_{\sB^c(0,1+|x|)}\phi(y)|\phi(y)-\phi(y-x)|\,\D y.
 \end{align*}
For any $x\in \sB$ one has $\sB(0,1+|x|)\subseteq\sB(0,2)$, and whence
		$$\sup_{x\in \sB}\int_{\sB(0,1+|x|)}\phi(y)|\phi(y)-\phi(y-x)|\, \D y \,\le\,2\,\lambda(\sB(0,2)).$$ 
By \Cref{lm:beta_bound} with $\beta=\alpha/2$, for a constant $ c_1 = c(d,\alpha,\beta)$,
		$$|\phi(y)-\phi(y-x)|\,\le\, c_1\bigl(\phi(y)+\phi(y-x)\bigr)\frac{1+|x|^{\alpha/2}}{|y|^{\alpha/2}},\qquad y\in\sB^c(0,1+|x|).
		$$
	By \cite[Lemma 2.5]{Vondra}, there exists a constant $c_2=c_2(d,\alpha)>0$ such that $	\phi(w)\leq c_2|w|^{\alpha -d}$,  for any $w\in\sB^c$.
		Thus, for $x \in \sB$ we have
		\begin{align*}
		&\int_{\sB^c(0,1+|x|)}\phi(y)|\phi(y)-\phi(y-x)|\,\D y\\&\,\le\, 2^{d-\alpha/2+1}(1+2^{d-\alpha})\,c_1c_2^2\int_{\sB^c(0,1+|x|)}
		\frac{1}{|y-x|^{2d-3\alpha/2}}\,\D y\\
		&\,\le \, 2^{d-\alpha/2+1}(1+2^{d-\alpha})\,c_1c_2^2\, d\, \lambda(\sB)\int_{1}^\infty
		\frac{1}{r^{d-3\alpha/2+1}}\,\D r,
		\end{align*}	
		where we
	used the fact that $|y-x|\le2|y|.$ The assertion follows as  $d/\alpha>3/2$. 
	\end{proof}

\begin{lemma}\label{lm:4-th-moment}
	There exists a constant $\tilde c > 0$ such that for all $t>0$ large enough,
	\begin{equation*}
		\bbE[\langle \calV_t \rangle^4] \,\le\, \tilde c\,t^2.
	\end{equation*}
\end{lemma}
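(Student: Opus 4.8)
The plan is to set $M(t)\df\bbE[\langle\calV_t\rangle^4]$ and bootstrap on $M$, using the block decomposition from the proof of \Cref{tm:CLT}. First I would record that $M$ is finite and locally bounded: running the iteration of \Cref{LM:2.2} with four marked points, each required to be reached by time $t$, gives $\bbE[\calV_t^4]\le 4!\,2^{3d}(\bbE[\calV_t])^4$, which is finite by \cref{eq:EXP} and \cref{eq:SLLN}; since $\calV_t\le\calV_{\ceil t}$ by monotonicity, $\sup_{0\le t\le T}M(t)<\infty$ for every $T$.

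Next I would fix an exponent $\theta\in(0,1/2)$, put $n\df\ceil{t^\theta}$ and $s\df t/n$, split $[0,t]$ into $n$ blocks of length $s$, and iterate the inclusion--exclusion/Markov step exactly as for \cref{eq:decom_of_Vt} to write $\calV_t=\sum_{i=1}^n\calV^{(i)}_s-\calR(t)$, where $\{\calV^{(i)}_s\}_{1\le i\le n}$ are i.i.d.\ of law $\calV_s$ and $\calR(t)=\sum_{i=1}^{n-1}\lambda(\calS^{(i)}_s\cap\calS^{(i+1)}_{(n-i)s})$. Centring, using $(a-b)^4\le 8(a^4+b^4)$ and the elementary i.i.d.\ identity $\bbE[(\sum_{i=1}^n\langle\calV^{(i)}_s\rangle)^4]=nM(s)+3n(n-1)\Var(\calV_s)^2$, one obtains
\begin{equation*}
	M(t)\,\le\,8\,nM(s)+24\,n^2\Var(\calV_s)^2+8\,\bbE[\langle\calR(t)\rangle^4].
\end{equation*}
By \cref{eq:VAR}, $\Var(\calV_s)\le c_1 s$ for $s$ large, so the middle term is $\le 24c_1^2(ns)^2=24c_1^2\,t^2$. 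Each summand of $\calR(t)$ is stochastically dominated by $\lambda(\calS_s\cap\calS'_\infty)$ (extend the second path to infinite time and use the Markov property, as in the proof of \cref{eq:error_term->0}), so \Cref{Cor:moment_bounds} together with a power-mean bound on $(\sum_iR_i)^4$ yields $\bbE[\langle\calR(t)\rangle^4]\le C\,n^4h(s)^4$. Recalling \cref{eq:def_of_hd} and that $n\asymp t^\theta$, $s\asymp t^{1-\theta}$, the quantity $n^4h(s)^4$ is $O(t^{4\theta})$ for $d/\alpha>2$, $O(t^{4\theta}(\log t)^4)$ for $d/\alpha=2$, and $O(t^{4\theta+4(1-\theta)(2-d/\alpha)})$ for $d/\alpha\in(3/2,2)$; since $d/\alpha>3/2$, a $\theta\in(0,1/2)$ small enough (any $\theta<\tfrac{d/\alpha-3/2}{d/\alpha-1}$ in the last regime) makes all these $\le C_2\,t^2$ for $t$ large, and I would fix such a $\theta$.

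It then remains to close the recursion by strong induction. With $t_1$ large enough that the two estimates above hold for $t\ge t_1$ and that $8\,t_1^{-\theta}\le\tfrac14$, set $t_2\df(2t_1)^{1/(1-\theta)}$, $B\df 8(\bbE[\calV_{\ceil{t_2}}^4]+(\bbE[\calV_{\ceil{t_2}}])^4)<\infty$, and $A\df\max\{B/t_1^2,\ 64c_1^2+64C_2\}$. For $t\in[t_1,t_2]$ one has $M(t)\le B\le A\,t^2$ since $\calV_t\le\calV_{\ceil{t_2}}$. For $t>t_2$ one checks $s=t/n\in[t_1,t)$ (indeed $s\ge t^{1-\theta}/2>t_1$), so the inductive hypothesis gives $M(s)\le A\,s^2$, and then, using $ns=t$ and $8/n\le 8\,t_1^{-\theta}\le\tfrac14$,
\begin{equation*}
	M(t)\,\le\,8\,nA\,s^2+24\,c_1^2\,t^2+8\,C_2\,t^2\,=\,\tfrac{8A}{n}\,t^2+(24c_1^2+8C_2)\,t^2\,\le\,\tfrac14A\,t^2+\tfrac12A\,t^2\,\le\,A\,t^2,
\end{equation*}
which closes the induction and gives the claim with $\tilde c=A$. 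The delicate point is the conflicting demand on the number of blocks: controlling the i.i.d.\ part $8nM(s)\approx 8A\,t^2/n$ wants $n$ large, while the error term, whose fourth moment is of order $n^4h(t/n)^4$, wants $n$ to grow only like a small power of $t$; these are reconcilable precisely because $d/\alpha>3/2$, which is exactly what guarantees a valid choice of $\theta$, after which everything else is routine bookkeeping.
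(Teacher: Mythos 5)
Your proof is correct, but it takes a different route from the paper's. The paper fixes $n=2$ (a dyadic split) and works with the $L^4$-norm $\norm{\langle\calV_t\rangle}_4$: the triangle inequality plus the i.i.d.\ identity give $\norm{\langle\calV_t\rangle}_4 \le \bigl(2\bbE[\langle\calV_{t/2}\rangle^4]\bigr)^{1/4}+O(\sqrt t)$, and after setting $\gamma_k=\sup\{\norm{\langle\calV_t\rangle}_4:2^k\le t<2^{k+1}\}$ and normalizing by $2^{k/2}$, the coefficient $2^{1/4}<2^{1/2}$ makes the recursion contractive. You instead split into $n\asymp t^\theta$ blocks and bootstrap directly on the fourth moment $M(t)$, obtaining $M(t)\le 8nM(s)+O(t^2)$ with $s=t/n$; the contraction comes from the explicit factor $1/n\to0$ rather than from the fourth-root of the i.i.d.\ count. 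The trade-off is that with $n$ growing you must control the error term $\calR(t)$, which now has $n-1$ summands, via a crude power-mean bound $(\sum_iR_i)^4\le n^3\sum_iR_i^4$; this costs a factor $n^4h(s)^4$, and the condition $d/\alpha>3/2$ enters precisely through your constraint $\theta<(d/\alpha-3/2)/(d/\alpha-1)$ (for the paper it enters as $h(t)\le\sqrt t$ so the single error term has $L^4$-norm $O(\sqrt t)$). Both are valid and hinge on the same fact; the paper's dyadic version is more economical (one error term instead of $n-1$, a cleaner affine recursion), while yours avoids $L^4$-norm arithmetic at the price of a tunable exponent. One small presentational caveat: since $t$ ranges over the reals, what you call ``strong induction'' should really be phrased as a finite backward iteration of the map $t\mapsto t/\ceil{t^\theta}$ down into the base interval $[t_1,t_2]$ (as you note, this takes finitely many steps because $s(t)\le t/2$ for $t>1$); the substance is the same, but as written the induction has no well-ordering to lean on.
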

\begin{proof}
	By setting $n = 2$ in \cref{eq:decom_of_Vt}, we have
	\begin{equation*}
		\calV_t \,=\, \lambda(\calS^{(1)}_{t/2}) + \lambda(\calS^{(2)}_{t/2}) - \lambda\bigl(\calS^{(1)}_{t/2} \cap \calS^{(2)}_{t/2}\bigr),
\end{equation*}
where $\calS^{(1)}_{t/2}$ and $\calS^{(2)}_{t/2}$ are independent, and have the same law as $\calS_{t/2}$. Let $\calV^{(i)}_{t/2} = \lambda(\calS^{(i)}_{t/2})$, for $i = 1, 2$. Taking expectation in the last equation and then subtracting the two relations yields
\begin{equation*}
	\langle \calV_t \rangle \,=\, \langle \calV^{(1)}_{t/2} \rangle + \langle \calV^{(2)}_{t/2} \rangle - \langle \lambda(\calS^{(1)}_{t/2} \cap \calS^{(2)}_{t/2}) \rangle.
\end{equation*}
By the triangle inequality,\footnote{For a random variable $Y$  we write $\norm{Y}_p = (\bbE[\aps{Y}^p])^{1/p}$, for any $p\ge 1$.}
\begin{equation}\label{eq:norm4}
	\norm{\langle \calV_t \rangle}_4 \,\le\, \norm{\langle \calV^{(1)}_{t/2} \rangle + \langle \calV^{(2)}_{t/2} \rangle}_4 + \norm{\langle \lambda(\calS^{(1)}_{t/2} \cap \calS^{(2)}_{t/2}) \rangle}_4,
\end{equation}
Jensen's inequality and \Cref{LM:2.2} imply that there is a constant $c_1>0$ such that
\begin{equation}
	\begin{aligned}\label{al:norm4-2nd-term}
		\norm{\langle \lambda(\calS^{(1)}_{t/2} \cap \calS^{(2)}_{t/2}) \rangle}_4
		& \,\le\, \norm{\lambda(\calS^{(1)}_{t/2} \cap \calS^{(2)}_{t/2})}_4 + \bbE[\lambda(\calS^{(1)}_{t/2} \cap \calS^{(2)}_{t/2})] \,\le\, 2\norm{\lambda(\calS^{(1)}_{t/2} \cap \calS^{(2)}_{t/2})}_4\\
		& \,\le\, 2\norm{\lambda(\calS^{(1)}_{t/2} \cap \calS^{(2)}_{\infty})}_4 \,\le\, c_1 \,h(t/2) \,\le\, c_1 \,\sqrt{t}.
	\end{aligned}
\end{equation}
By the independence of  the variables $\langle \calV^{(1)}_{t/2} \rangle$ and $\langle \calV^{(2)}_{t/2} \rangle$, 
we have
\begin{equation*}
	\bbE\UGL{\OBL{\langle \calV^{(1)}_{t/2} \rangle + \langle \calV^{(2)}_{t/2} \rangle}^4} \,=\, \bbE\UGL{\langle \calV^{(1)}_{t/2} \rangle^4} + \bbE\UGL{\langle \calV^{(2)}_{t/2} \rangle^4} + 6\, \bbE\UGL{\langle \calV^{(1)}_{t/2} \rangle^2} \bbE\UGL{\langle \calV^{(2)}_{t/2} \rangle^2}.
\end{equation*}
By \Cref{lm:positivity}, there exists $N \in \bbN$ large enough such that $\Var(\calV_t) \le c_2 t$ for all $t \ge 2^N$ and some $c_2>0$. Hence, for $t \ge 2^{N + 1}$,
\begin{equation*}
	\bbE\UGL{\OBL{\langle \calV^{(1)}_{t/2} \rangle + \langle \calV^{(2)}_{t/2} \rangle}^4} \,\le\, \bbE\UGL{\langle \calV^{(1)}_{t/2} \rangle^4} + \bbE\UGL{\langle \calV^{(2)}_{t/2} \rangle^4} + 6\OBL{ \frac{c_2 t}{2}}^2.
\end{equation*}
By combining this with the elementary inequality $(a + b)^{1/4} \le a^{1/4} + b^{1/4}$, we arrive at
\begin{equation}\label{eq:norm4-1st-term}
	\norm{\langle \calV^{(1)}_{t/2} \rangle + \langle \calV^{(2)}_{t/2} \rangle}_4 \,\le\, \OBL{\bbE\UGL{\langle \calV^{(1)}_{t/2} \rangle^4} + \bbE\UGL{\langle \calV^{(2)}_{t/2} \rangle^4}}^{1/4} + c_3\sqrt{t}
\end{equation} 
with $c_3=(3c_2^2/2)^{1/4}$.
From \cref{eq:norm4,al:norm4-2nd-term,eq:norm4-1st-term} it follows that there is $c_4>0$ such that
\begin{equation*}
	\norm{\langle \calV_t \rangle}_4 \,\le\, \OBL{\bbE\UGL{\langle \calV^{(1)}_{t/2} \rangle^4} + \bbE\UGL{\langle \calV^{(2)}_{t/2} \rangle^4}}^{1/4} + c_4\sqrt{t}
\end{equation*} 
For $k \ge N$ we set
\begin{equation*}
	\gamma_k \,= \, \sup \{\norm{\langle \calV_t \rangle}_4 : 2^k \le t < 2^{k + 1}\}.
\end{equation*}
Thus, for $k \ge N + 1$ and for every  $2^k \le t < 2^{k + 1}$ we have
\begin{equation*}
	\norm{\langle \calV_t \rangle}_4 \,\le\, (\gamma_{k - 1}^4 + \gamma_{k - 1}^4)^{1/4} + c_5 \,2^{k/2}
\end{equation*} with $c_5=\sqrt{2}\, c_4$.
Taking supremum over $2^k \le t < 2^{k + 1}$  yields
\begin{equation*}
	\gamma_k \,\le\, 2^{1/4} \,\gamma_{k - 1} + c_5\, 2^{k/2}.
\end{equation*}
We set
$\delta_k = \gamma_k/2^{k/2}$ and
we divide the last inequality by $2^{k/2}$. We thus have
\begin{equation*}
	\delta_k \,\le\,  \frac{2^{1/4} \gamma_{k - 1}}{2^{1/2} 2^{(k - 1)/2}} + c_5 \,=\, 2^{-1/4} \delta_{k - 1} + c_5.
\end{equation*}
By iterating this inequality we finally conclude the result.
\end{proof}

\begin{lemma}\label{lm:var-asimp}
The following expansion is valid
\begin{align*}
\Var(\calV_t)\,=\,\sigma^2t+\mathcal{O}(t^{1/2} h(t)),\qquad t\geq 1,
\end{align*}
where the function $h(t)$ is defined in \cref{eq:def_of_hd}.
\end{lemma}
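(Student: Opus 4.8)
The plan is to upgrade the qualitative convergence $\Var(\calV_t)/t\to\sigma^2$ of \cref{eq:VAR} to a rate by exhibiting an almost-additivity property of $n\mapsto\Var(\calV_n)$ and feeding the moment estimates of \Cref{Cor:moment_bounds} into a quantitative subadditivity (Hammersley / De~Bruijn--Erd\H{o}s type) argument. First I would reduce to integer $t=n$: once the integer case is known, for $t\ge1$ we have $\calV_t-\calV_{\lfloor t\rfloor}=\lambda(\widehat{\calS}[\lfloor t\rfloor,t])$ with $\|\calV_t-\calV_{\lfloor t\rfloor}\|_2\le\|\calV_1\|_2<\infty$ (finiteness of the second moment follows by the computation in the proof of \Cref{LM:2.2}), so the Cauchy--Schwarz inequality gives $|\Var(\calV_t)-\Var(\calV_{\lfloor t\rfloor})|\le\Var(\calV_t-\calV_{\lfloor t\rfloor})+2\bigl(\Var(\calV_{\lfloor t\rfloor})\,\Var(\calV_t-\calV_{\lfloor t\rfloor})\bigr)^{1/2}=\mathcal{O}(t^{1/2})$, which is absorbed into $\mathcal{O}(t^{1/2}h(t))$.

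For the integer case, fix $s,t\ge1$. By \cref{eq:VOL}, $\langle\calV_{s+t}\rangle=\langle\calV^{(1)}_s\rangle+\langle\calV^{(2)}_t\rangle-\langle I\rangle$, where $I=\lambda(\calS^{(1)}_s\cap\calS^{(2)}_t)$ and $\calV^{(1)}_s,\calV^{(2)}_t$ are independent. Taking variances, using the inclusions $\calS^{(1)}_s\subseteq\calS^{(1)}_\infty$, $\calS^{(2)}_t\subseteq\calS^{(2)}_\infty$ together with \Cref{Cor:moment_bounds} (with $k=2$) one gets $\bbE[I^2]\le\bbE[\lambda(\calS_{s\wedge t}\cap\calS'_\infty)^2]\le c\,h(s\wedge t)^2$, and since $\Var(\calV_u)=\mathcal{O}(u)$ for $u\ge1$ (a consequence of \cref{eq:VAR}), the Cauchy--Schwarz inequality gives
\begin{equation*}
\bigl|\Var(\calV_{s+t})-\Var(\calV_s)-\Var(\calV_t)\bigr|\,\le\,\bbE[I^2]+2\bigl(\Var(\calV_s)+\Var(\calV_t)\bigr)^{1/2}\bbE[I^2]^{1/2}\,\le\,c'\,(s+t)^{1/2}h(s+t)\,=:\,\phi(s+t),
\end{equation*}
where I used $h(u)^2\le u^{1/2}h(u)$, valid precisely because $h(u)=\mathcal{O}(u^{1/2})$ when $d/\alpha>3/2$.

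It then remains to run a quantitative subadditivity argument on $g(n)=\Var(\calV_n)$. The error function $\phi(u)=c'u^{1/2}h(u)$ is nondecreasing, grows at least geometrically along the dyadic scale $\{2^j\}_{j\ge0}$, and satisfies the Dini-type summability $\sum_{j\ge0}\phi(2^j)\,2^{-j}<\infty$ — and it is here that the hypothesis $d/\alpha>3/2$ enters, the series diverging when $d/\alpha\le3/2$. Writing $a_k=g(2^k)/2^k$, the estimate $|a_{k+1}-a_k|\le\phi(2^{k+1})2^{-(k+1)}$ shows $a_k\to\sigma^2$ with $|a_k-\sigma^2|\le\sum_{j>k}\phi(2^j)2^{-j}=\mathcal{O}(\phi(2^k)2^{-k})$ by the geometric decay, hence $|g(2^k)-\sigma^2 2^k|=\mathcal{O}(\phi(2^k))$; for general $n$ one peels off the binary digits of $n$ via the almost-additivity above, the accumulated peeling error as well as $\sum_{2^i\le n}\phi(2^i)$ being $\mathcal{O}(\phi(n))$ by the geometric growth of $\phi$ (this is what rules out a spurious logarithmic factor). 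This gives $g(n)=\sigma^2 n+\mathcal{O}(\phi(n))=\sigma^2 n+\mathcal{O}(n^{1/2}h(n))$, which together with the reduction proves $\Var(\calV_t)=\sigma^2 t+\mathcal{O}(t^{1/2}h(t))$. (Alternatively, one may work with the martingale decomposition \cref{eq:key_relation} from the proof of \Cref{lm:positivity} and show $\sum_{k=0}^{n-1}\bbE[Z_k^2]=\sigma^2 n+\mathcal{O}(n^{1/2}h(n))$ via an $L^2$ estimate $\|Z_k-Z\|_2=\mathcal{O}(k^{1-d/\alpha})$, but the subadditive route is shorter.)

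The main obstacle is making sure the almost-additivity error genuinely has the form $\phi(s+t)=\mathcal{O}((s+t)^{1/2}h(s+t))$: this is what forces the use of the second-moment bound of \Cref{Cor:moment_bounds} at the smaller time $s\wedge t$ and of $\Var(\calV_u)=\mathcal{O}(u)$, and it is the dyadic summability of exactly this $\phi$ that singles out the borderline exponent $3/2$.
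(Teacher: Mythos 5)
Your proposal is correct and follows essentially the same route as the paper: almost-additivity of $\Var(\calV_t)$ with defect $\mathcal{O}(t^{1/2}h(t))$ obtained by Cauchy--Schwarz against the intersection term from \cref{eq:VOL} (using \Cref{Cor:moment_bounds} and $\Var(\calV_t)=\mathcal{O}(t)$), followed by a dyadic telescoping argument whose summability is precisely where $d/\alpha>3/2$ enters. The paper organizes the telescoping slightly more directly --- setting $s=t=2^{k-1}r$ for real $r\ge1$ and summing $|x_{2^kr}/(2^kr)-x_{2^{k-1}r}/(2^{k-1}r)|$ --- which dispenses with both the reduction to integer $t$ and the binary-digit peeling step, but this is a presentational difference, not a mathematical one.
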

\begin{proof}
	For every $s,t\ge0$ we have 
	$$\calV_{s+t}\,=\,\lambda\bigl(\calS_s\cup\calS[s,s+t]\bigr)\,=\,\lambda(\calS_s^{(1)})+\lambda(\calS_t^{(2)})-\lambda\bigl(\calS_s^{(1)}\cap\calS_t^{(2)}\bigr).$$ 
This implies
	$$\calV^{(1)}_s+\calV^{(2)}_t-\lambda\bigl(\calS_{s+t}^{(1)}\cap\calS_{s+t}^{(2)}\bigr)\,\le\,\calV_{s+t}\,\le\,\calV^{(1)}_s+\calV^{(2)}_t,$$ 
	and whence 
	$$\langle\calV^{(1)}_s\rangle+\langle\calV^{(2)}_t\rangle-\lambda\bigl(\calS_{s+t}^{(1)}\cap\calS_{s+t}^{(2)}\bigr)\,\le\,\langle\calV_{s+t}\rangle\,\le\,\langle\calV^{(1)}_s\rangle+\langle\calV^{(2)}_t\rangle+\bbE[\lambda(\calS_{s+t}^{(1)}\cap\calS_{s+t}^{(2)})].
	$$
Here, $\calS^{(1)}_s$ and $\calS^{(2)}_t$ are independent and have the same law as  $\calS_s$ and $\calS_t$, respectively. 
We set $\mathcal{I}_t\,=\,\lambda\bigl(\calS_t\cap\calS_t'\bigr)$, where $\calS_t'$ is an independent copy of $\calS_t$. From the previous relation we obtain
$$|\langle\calV_{s+t}\rangle-(\langle\calV^{(1)}_s\rangle+\langle\calV^{(2)}_t\rangle)|\,\le\,\mathcal{I}_{s+t}+\bbE[\mathcal{I}_{s+t}]\,\le\, \mathcal{I}_{s+t}+\lVert\mathcal{I}_{s+t}\rVert_2.
$$
Hence
	\begin{equation}\label{eq:var-asimp-key}
\lVert\langle\calV_{s+t}\rangle-(\langle\calV^{(1)}_s\rangle+\langle\calV^{(2)}_t\rangle)\rVert_2\,\le\, 2	\lVert\mathcal{I}_{s+t}\rVert_2,	
	\end{equation}
	and
	\begin{equation}\label{eq:var-asimp-1}
		\lVert \langle\calV_{s+t}\rangle\rVert_2^2\,\le\, \lVert \langle\calV_{s}\rangle\rVert_2^2+\lVert \langle\calV_{t}\rangle\rVert_2^2+4\bigl(\lVert \langle\calV_{s}\rangle\rVert_2^2+\lVert \langle\calV_{t}\rangle\rVert_2^2\bigr)^{1/2}\lVert\mathcal{I}_{s+t}\rVert_2+4\,\lVert\mathcal{I}_{s+t}\rVert_2^2.
	\end{equation}
By \cref{eq:k-th_moment_bound},  there are $c_1 > 0$ and $t_1>1$ such that $\lVert\mathcal{I}_{t}\rVert_2\,\le\,c_1\,h(t)$ for $t\ge t_1.$ For $t\in[1,t_1]$ we clearly have $\mathcal{I}_t \le \mathcal{I}_{t_1}$. Thus, there is a constant $c_2>0$ such that, 
$$\lVert\mathcal{I}_t\rVert_2\,\le\, c_2\, h(t),\qquad t\ge1.$$
Moreover, from \cref{eq:VAR} we have that there exist $c_3 > 0$ and $t_2 > 1$ such that $\Var(\calV_t) \le c_3t$ for $t \ge t_2$, and for $t \in [1, t_2]$ we have $\Var(\calV_t)\le \bbE[\calV_t^2]\le \bbE[\calV_{t_2}^2]\, t$. Hence
	$$\Var(\calV_t)\,\le\, \bigl(c_3+\bbE[\calV_{t_2}^2]\bigr) t,\qquad t\ge 1.$$
We conclude that there is $c_4>0$
such that
	\begin{equation}\label{eq:var-asimp-11}
		\lVert\langle\calV_{t}\rangle\rVert_2\,\le\,c_4\sqrt{t}\qquad\text{and}\qquad \lVert\mathcal{I}_t\rVert_2\,\le\,c_4\, h(t),\qquad t\ge1.	
	\end{equation}
By \cref{eq:var-asimp-1}, we obtain
	\begin{align*}
		\norm{\langle \calV_{s + t} \rangle}^2_2
		& \,\le\, \norm{\langle \calV_s \rangle}^2_2 + \norm{\langle \calV_t \rangle}^2_2 \,+\, 4\,c_4^2 \sqrt{s + t}\, h(s + t) + 4\,c_4^2(h(s + t))^2 \\
		& \,\le\, \norm{\langle \calV_s \rangle}^2_2 + \norm{\langle \calV_t \rangle}^2_2 + c_5 \sqrt{s + t}\, h(s + t),
	\end{align*}
	for some constant $c_5 > 0$. 
	Similarly as above, in view of \cref{eq:var-asimp-key} we have
	\begin{align*}
		\norm{\langle \calV^{(1)}_s \rangle + \langle \calV^{(2)}_t \rangle}_2 
		& \,\le\, \norm{\langle \calV_{s + t} \rangle}_2 + \norm{\langle \calV_{s + t} \rangle - (\langle \calV^{(1)}_s \rangle + \langle \calV^{(2)}_t \rangle)}_2 \\
		& \,\le\, \norm{\langle \calV_{s + t} \rangle}_2 + 2\norm{\calI_{s + t}}_2,
	\end{align*}
	which implies
	\begin{equation*}
		\norm{\langle \calV_s \rangle}^2_2 + \norm{\langle \calV_t \rangle}^2_2 \,\le\, \norm{\langle \calV_{s + t} \rangle}^2_2 + 4\,\norm{\langle \calV_{s + t} \rangle}_2 \norm{\calI_{s + t}}_2 + 4\,\norm{\calI_{s + t}}^2_2.
	\end{equation*}
By \cref{eq:var-asimp-11},
	 \begin{align*}
	 	\norm{\langle \calV_s \rangle}^2_2 + \norm{\langle \calV_t \rangle}^2_2
	 	& \,\le\, \norm{\langle \calV_{s + t} \rangle}^2_2 + 4\,c_4^2 \sqrt{s + t}\,h(s + t) + 4\,c_4^2 (h(s + t))^2 \\
	 	& \,\le\, \norm{\langle \calV_{s + t} \rangle}^2_2 + + c_5 \sqrt{s + t}\,h(s + t).
	 \end{align*}
	 We set
	 \begin{equation*}
	 	x_t \,=\, \Var(\calV_t) \,=\, \norm{\langle \calV_t \rangle}_2^2 \quad \textnormal{and} \quad b_t \,=\, c_5 \sqrt{t}\, h(t), \qquad t > 0,
	 \end{equation*}
and we have shown that
	\begin{equation*}
		x_s + x_t - b_{s + t} \,\le\, x_{s + t} \,\le\,  x_s + x_t + b_{s + t} , \qquad s, t \ge 1.
	\end{equation*}
By \Cref{lm:positivity} we know that
	\begin{equation*}
		\lim_{t \nearrow \infty} \frac{x_t}{t} \,=\, \sigma^2 > 0.
	\end{equation*} 
	Take $s = t = 2^{k-1} r$ for  $k \in \bbN$ and $r \in \bbR$, $r \ge 1$. We easily verify that
	\begin{equation*}
	\left| \frac{x_{2^k r}}{2^k r} - \frac{x_{2^{k-1} r}}{2^{k-1} r} \right| \,\le\, \frac{b_{2^k r}}{2^kr},\qquad k\in\bbN,\ r \ge 1.
	\end{equation*}
	Next, we observe that
	\begin{equation*}
		\sum_{k=1}^{\infty} \left(\frac{x_{2^kr}}{2^kr} - \frac{x_{2^{k-1}r}}{2^{k-1}r}\right) \,=\, \lim_{N \nearrow \infty} \sum_{k = 1}^N \left(\frac{x_{2^kr}}{2^kr} - \frac{x_{2^{k-1}r}}{2^{k-1}r}\right) \,=\, \sigma^2 - \frac{x_r}{r},\qquad r\ge1,
	\end{equation*}
	and whence
	\begin{equation*}
		\left|\frac{x_t}{t} - \sigma^2 \right| \,=\, \left| \sum_{k = 1}^{\infty} \left(\frac{x_{2^kt}}{2^kt} - \frac{x_{2^{k-1}t}}{2^{k-1}t}\right)\right| \,\le\, \sum_{k = 1}^{\infty} \frac{b_{2^kt}}{2^kt},\qquad t\ge1.
	\end{equation*}
This yields
	\begin{equation*}
		\APS{\frac{x_t}{t} - \sigma^2} \,\le\, \sum_{k = 1}^{\infty} \frac{c_5 \sqrt{2^kt}\, h(2^kt)}{2^kt} \,\le\, \frac{c_5}{\sqrt{t}} \sum_{k = 1}^{\infty} \frac{h(2^kt)}{2^{k/2}}, \qquad t \ge 1.
	\end{equation*}
	\textit{Case (i).} 
For $\Delta \in (0,1/2)$ we have
	\begin{equation*}
		\APS{\frac{x_t}{t} - \sigma^2} \,\le\, \frac{c_5}{\sqrt{t}} \sum_{k = 1}^{\infty} \frac{(2^kt)^{1/2 - \Delta}}{2^{k/2}} \,=\, \frac{c_5}{t^{\Delta}} \sum_{k = 1}^{\infty} (2^{-\Delta})^k \,=\, c_6 \,t^{-\Delta}, \qquad t \ge 1,
	\end{equation*}
	where $c_6=c_5\sum_{k=1}^\infty2^{-\Delta\, k}$.
	It follows that
	\begin{equation*}
		\aps{x_t - \sigma^2t} \,\le\, c_6 \,t^{1 - \Delta} \,=\, c_6\, t^{1/2} h(t), \qquad t \ge 1.
	\end{equation*}
	\textit{Case (ii).} 
	If $\Delta \ge 1/2$, then $h(t)$ is slowly  varying. According to \cite[Theorem 1.5.6]{BGT_book} there is a constant $c_7 > 0$ such that $h(2^kt) \le c_7 2^{k/4} h(t)$ for all $k \in \bbN$ and $t \ge 1$. We obtain
	\begin{equation*}
		\APS{\frac{x_t}{t} - \sigma^2}\, \le\, \frac{c_5}{\sqrt{t}} \sum_{k = 1}^{\infty} \frac{c_7 2^{k/4}h(t)}{2^{k/2}} \,=\, \frac{c_5 c_7 h(t)}{\sqrt{t}} \sum_{k = 1}^{\infty} 2^{-k/4} \,=\, c_8\, t^{-1/2} h(t), \qquad t\ge 1,
	\end{equation*} 
	with $c_8=c_5c_7\sum_{k=1}^\infty2^{-k/4}$,
	and the proof is finished.
\end{proof}


\begin{lemma}\label{lm:4.3'} 
Assume that $d/\alpha>9/5$.
	Then, for the process $\{\mathcal{J}_{n_i}\}_{i\geq 0}$ defined in \cref{J_process}, it holds that	$$
	\lim_{i\nearrow\infty}\frac{|\sum_{j=0}^{i-1}\langle\mathcal{J}_{n_j}\rangle |}{\sqrt{n_i/\log\log n_i}}\,=\,0\qquad \Prob\text{-a.s.}
	$$
\end{lemma}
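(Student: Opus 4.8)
The plan is to bound a moment of $\sum_{j=0}^{i-1}\langle\mathcal{J}_{n_j}\rangle$ by a power of $n_i$ with exponent strictly below the threshold dictated by the density of the subsequence, and then conclude by Borel--Cantelli \emph{along the full sequence}, grouping the indices $i$ according to the dyadic block containing $n_i$ (within $[2^k,2^{k+1})$ there are $\asymp k\,2^{k/2}$ of them, and there $n_{j+1}-n_j\asymp n_j^{1/2}/\log n_j$, by \cref{eq:2}). Two single--term moment bounds will be used repeatedly. First, by the Markov property, rotational invariance and a time--reversal argument (as in the derivation of \cref{al:expression_for_the_error_term-1st_moment}), $\mathcal{J}_{n_j}$ has the law of $\lambda(\calS^{(1)}_{n_{j+1}-n_j}\cap\calS^{(2)}_{n_j})$ for two independent stable sausages issued from the origin, whence $\mathcal{J}_{n_j}\le\lambda(\calS^{(1)}_{n_{j+1}-n_j}\cap\calS^{(2)}_{\infty})$ and, by \Cref{LM:2.2} and \Cref{Cor:moment_bounds},
\[
	\|\mathcal{J}_{n_j}\|_p\,\le\, c_p\,h(n_{j+1}-n_j),\qquad p\ge 1,\ j\ \text{large},
\]
with $h$ as in \cref{eq:def_of_hd}; second, $\mathcal{J}_{n_j}=\mathcal{V}(n_j,n_{j+1}]-\lambda(\calS_{n_{j+1}}\setminus\calS_{n_j})$, so $\sum_{j=0}^{i-1}\mathcal{J}_{n_j}=\sum_{j=0}^{i-1}\mathcal{V}(n_j,n_{j+1}]-\calV_{n_i}+O(1)$ by \cref{eq:dec}, which in particular makes it nonnegative and of order $n_i$.

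The main work is the covariance estimate. Fix $j<l$. Since $\calS_{n_l}=\calS_{n_{j+1}}\cup\calS(n_{j+1},n_l]$, inclusion--exclusion gives $\mathcal{J}_{n_l}=\lambda(\calS(n_l,n_{l+1}]\cap\calS(n_{j+1},n_l])+R_{l,j}$ with $\aps{R_{l,j}}\le 2\,\lambda(\calS(n_l,n_{l+1}]\cap\calS_{n_{j+1}})$; the first term is measurable with respect to the increments of $\X$ after time $n_{j+1}$, hence independent of $\mathcal{J}_{n_j}$, so
\[
	\aps{\mathrm{Cov}(\mathcal{J}_{n_j},\mathcal{J}_{n_l})}\ \le\ 2\,\|\mathcal{J}_{n_j}\|_2\,\bigl\|\lambda(\calS(n_l,n_{l+1}]\cap\calS_{n_{j+1}})\bigr\|_2.
\]
One is thus reduced to showing that the overlap of the \emph{recent} short sausage $\calS(n_l,n_{l+1}]$, which sits near $X_{n_l}$, with the \emph{old} sausage $\calS_{n_{j+1}}$ is small, reflecting that $X_{n_l}$ has moved a distance of order $(n_l-n_{j+1})^{1/\alpha}$ away from $\calS_{n_{j+1}}$. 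These two sets are independent; estimating $\bbE[\lambda(\calS(n_l,n_{l+1}]\cap\calS_{n_{j+1}})^2]$ by integrating the joint hitting probabilities over two space variables, bounding those probabilities through $\phi(\cdot)$ as in \Cref{LM:2.2}, and combining $\phi(w)\le c\,(\aps{w}^{\alpha-d}\wedge 1)$ from \cite[Lemma 2.5]{Vondra} with the standard on--diagonal bound $\bbP(\aps{X_t}\le r)\lesssim (r\,t^{-1/\alpha})^d\wedge 1$ and sharp estimates for $\bbP_w(\tau_{\sB}\le s)$, one obtains a bound of the form
\[
	\bigl\|\lambda(\calS(n_l,n_{l+1}]\cap\calS_{n_{j+1}})\bigr\|_2\ \le\ C\,(n_{l+1}-n_l)^{a}\,(n_l-n_{j+1})^{-b}
\]
for suitable $a,b>0$. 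Adding the diagonal $\sum_j\Var(\mathcal{J}_{n_j})\le\sum_j c\,h(n_{j+1}-n_j)^2$ and summing the covariance bounds over $j<l$ and then over the dyadic blocks yields
\[
	\Var\Bigl(\textstyle\sum_{j=0}^{i-1}\mathcal{J}_{n_j}\Bigr)\ \le\ C\,n_i^{\gamma}
\]
with $\gamma=\gamma(d,\alpha)$, and tracking the exponents shows that $\gamma<3/4$ exactly when $d/\alpha>9/5$ (the diagonal alone would only force $d/\alpha>7/4$; it is the covariance term that pins down $9/5$). The same scheme, applied to a combinatorial expansion of the $2p$-th centered moment -- in which each gap between two of the chosen indices is either fully developed, contributing a factor $h$, or short, contributing a decaying overlap factor -- together with \Cref{lm:4-th-moment}, gives $\bbE[(\sum_{j=0}^{i-1}\langle\mathcal{J}_{n_j}\rangle)^{2p}]\le C_p\,n_i^{p\gamma}$ for the relevant $p$.

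To finish, put $a_i=\sqrt{n_i/\log\log n_i}$. For $\varepsilon>0$, Markov's inequality gives $\bbP\OBL{\aps{\sum_{j=0}^{i-1}\langle\mathcal{J}_{n_j}\rangle}>\varepsilon a_i}\le C_p\,\varepsilon^{-2p}\,n_i^{p(\gamma-1)}(\log\log n_i)^p$. Summing first over the $\asymp k\,2^{k/2}$ indices $i$ with $n_i\in[2^k,2^{k+1})$ and then over $k$, the series is dominated by $\sum_k k\,(\log k)^p\,2^{k(1/2+p(\gamma-1))}$, which converges once $p$ is chosen so that $p(1-\gamma)>1/2$ -- possible since $\gamma<1$ (already $\gamma<3/4$ with $p=2$). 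By Borel--Cantelli, $\bbP$-a.s. only finitely many of these events occur, so $\sum_{j=0}^{i-1}\langle\mathcal{J}_{n_j}\rangle=o(a_i)$ $\bbP$-a.s. along the whole sequence $\{n_i\}$, which is the claim. The routine parts are the single--term moment estimates and this last Borel--Cantelli step; the delicate point -- and the place where the restriction $d/\alpha>9/5$ is genuinely used -- is the overlap estimate in the second paragraph, i.e.\ proving a decay of $\|\lambda(\calS(n_l,n_{l+1}]\cap\calS_{n_{j+1}})\|_2$ in the elapsed time $n_l-n_{j+1}$ with an exponent large enough to bring the overall variance exponent below $3/4$; this rests on the sharp two--sided hitting-probability and heat-kernel bounds for the stable process.
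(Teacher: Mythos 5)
Your proposal takes a genuinely different route from the paper, and the route has a real gap at its centre. The paper's proof is a three-step argument: (a) a \emph{variance} bound obtained by the telescoping identity $\sum_{j<l}\mathcal J_{n_j}\mathcal J^{(2)}_{l,j}=\sum_j\mathcal J_{n_j}\sum_{l>j}\mathcal J^{(2)}_{l,j}\le\sum_j\mathcal J_{n_j}\,\lambda(\calS(n_{j+1},n_i]\cap\calS_{n_{j+1}})$, which collapses the double sum and gives $\Var\bigl(\sum_{j<i}\mathcal J_{n_j}\bigr)\le c\,i\,h(n_i)^2$ directly by Cauchy--Schwarz and \Cref{Cor:moment_bounds}; (b) Chebyshev and Borel--Cantelli along a \emph{sparse} subsequence $\{n_{i_l}\}$ of density roughly $2^{\epsilon k/2}$ in the $k$-th dyadic block; and (c) an interpolation step that controls the gaps $\sum_{j=i_l}^{i_{l+1}-1}\bbE[\mathcal J_{n_j}]$ between consecutive sparse indices. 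The threshold $d/\alpha>9/5$ is \emph{not} a condition on a variance exponent $\gamma<3/4$; it is precisely what makes the window $\epsilon\in(\Lambda,1-4\Lambda)$ nonempty (step (b) needs $\epsilon<1-4\Lambda$, step (c) needs $\epsilon>\Lambda$, and these coexist iff $\Lambda<1/5$).

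Your proposal replaces (a) with a term-by-term covariance estimate, asserting a decay $\|\lambda(\calS(n_l,n_{l+1}]\cap\calS_{n_{j+1}})\|_2\le C(n_{l+1}-n_l)^a(n_l-n_{j+1})^{-b}$, and replaces (b)--(c) with a direct Borel--Cantelli sum over \emph{all} indices, which then requires a $2p$-th moment bound $\bbE\bigl[(\sum_{j<i}\langle\mathcal J_{n_j}\rangle)^{2p}\bigr]\le C_p\,n_i^{p\gamma}$. Two points need to be flagged. First, the higher-moment bound is the real content of your argument and is only asserted, not established; a variance bound does not propagate to a $(\Var)^p$-type fourth-moment bound for a sum of dependent, non-Gaussian random variables without a genuine combinatorial expansion, and \Cref{lm:4-th-moment} concerns $\langle\calV_t\rangle$, not $\sum_j\langle\mathcal J_{n_j}\rangle$, so it cannot be invoked as stated. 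Second, your exponent accounting does not close. With the paper's variance bound $i\,h(n_i)^2\asymp n_i^{1/2+2\Lambda}\log n_i$ (when $d/\alpha\in(1,2)$, $\Lambda=2-d/\alpha$), the variance exponent is $\gamma\approx 1/2+2\Lambda$, and $\gamma<3/4$ forces $\Lambda<1/8$, i.e.\ $d/\alpha>15/8$, not $9/5$. To reach $9/5$ with $p=2$ you would need a strictly sharper covariance estimate than the paper's, which your decay claim would have to produce with an explicit $a,b$; you do not compute these, and it is not clear the elapsed-time decay you sketch yields the right exponents. Alternatively one can take $p$ large (then $\gamma<1$ suffices, i.e.\ $\Lambda<1/4$, $d/\alpha>7/4$), but then the asserted $2p$-th moment bound is even further from being proved. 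In short, the proposal has the right flavour (moment bound plus Borel--Cantelli) but would need to supply both the covariance decay with explicit exponents and the higher-moment combinatorics; the paper's sparse-subsequence-and-interpolation scheme is precisely what lets one get by with the weaker, cleanly provable variance estimate, and is where $9/5$ actually comes from.
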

\begin{proof}
	For $i\geq 2$ we clearly  have
$$
	\Var\left(\sum_{j=1}^{i-1}\mathcal{J}_{n_j} \right)\,\le\,\sum_{j=1}^{i-1}\mathbb{E}[\mathcal{J}_{n_j}^2]+2\sum_{j=1}^{i-1} \sum_{k=1}^{j-1}\mathbb{E}[\mathcal{J}_{n_j} \mathcal{J}_{n_k}]
	-
	2\sum_{j=1}^{i-1} \sum_{k=1}^{j-1}\mathbb{E}[\mathcal{J}_{n_j}]\, \mathbb{E}[ \mathcal{J}_{n_k}].$$
	Let $\calS'_t$ be an independent copy of $\calS_t$. Then
	$$\mathcal{J}_{n_j}\,\overset{({\rm d})}{=}\,\lambda(\calS'[0,n_{j+1}-n_{j}]\cap\calS[0,n_j])$$ for $j=1,\dots,i-1$.
	Jensen's inequality and \Cref{Cor:moment_bounds} imply that, for some $c_1>0$,
	$$\mathbb{E}[\mathcal{J}_{n_j}]^2\,\le\,\mathbb{E}[\mathcal{J}_{n_j}^2] \,\le\, c_1 h(n_i)^2.
	$$ 
	Further, for any $j=1,\dots,i-1$ and $k=1,\dots, j-1$, it holds that \begin{align*}
	\mathcal{J}_{n_j}&\,=\,\lambda(\calS(n_{j},n_{j+1}]\cap\calS[0,n_j])\\
	&\,=\,\lambda(\calS(n_{j},n_{j+1}]\cap\calS[n_{k+1},n_j])+\lambda(\calS(n_{j},n_{j+1}]\cap(\calS[0,n_{k+1}]\setminus\calS[n_{k+1},n_j])).
	\end{align*}
	We set
	 \begin{align*}\mathcal{J}^{(1)}_{j,k}&\,=\,\lambda(\calS(n_{j},n_{j+1}]\cap\calS[n_{k+1},n_j]),\\  \mathcal{J}^{(2)}_{j,k}&\,=\, \lambda(\calS(n_{j},n_{j+1}]\cap(\calS[0,n_{k+1}]\setminus\calS[n_{k+1},n_j])).
	 \end{align*}
	Due to independence,  $$\mathbb{E}[\mathcal{J}_{n_k}\mathcal{J}^{(1)}_{j,k}]\,=\,\mathbb{E}[\mathcal{J}_{n_k}]\,\mathbb{E}[\mathcal{J}^{(1)}_{j,k}].$$ Thus,
	$$
	\Var\left(\sum_{j=1}^{i-1}\mathcal{J}_{n_j} \right)\,\le\,\sum_{j=1}^{i-1}\mathbb{E}[\mathcal{J}_{n_j}^2]+2\sum_{j=1}^{i-1} \sum_{k=1}^{j-1}\mathbb{E}[\mathcal{J}_{n_k}\mathcal{J}^{(2)}_{{j,k}} ]
	-
	2\sum_{j=1}^{i-1} \sum_{k=1}^{j-1}\mathbb{E}[ \mathcal{J}_{n_k}]\,\mathbb{E}[\mathcal{J}^{(2)}_{{j,k}}].$$
		Further,
	\begin{align*}
	&\sum_{j=1}^{i-1} \sum_{k=0}^{j-1}\mathcal{J}_{n_k}\mathcal{J}^{(2)}_{j,k}\\&\,=\,\sum_{j=1}^{i-1} \sum_{k=0}^{j-1}\lambda(\calS(n_{k},n_{k+1}]\cap\calS[0,n_k])\, \lambda(\calS(n_{j},n_{j+1}]\cap(\calS[0,n_{k+1}]\setminus\calS[n_{k+1},n_j]))\\
	&\,=\,\sum_{k=0}^{i-2}  \lambda(\calS(n_{k},n_{k+1}]\cap\calS[0,n_k])  \sum_{j=k+1}^{i-1}\lambda(\calS(n_{j},n_{j+1}]\cap(\calS[0,n_{k+1}]\setminus\calS[n_{k+1},n_j]))\\
	&\,\le\, \sum_{k=0}^{i-2}  \lambda(\calS(n_{k},n_{k+1}]\cap\calS[0,n_k])\, \lambda(\calS(n_{k+1},n_{i}]\cap\calS[0,n_{k+1}]).
	\end{align*}
	Similarly as before, by \Cref{Cor:moment_bounds}, we obtain 
	\begin{align*}
	\mathbb{E}[\lambda(\calS(n_{k},n_{k+1}]\cap\calS[0,n_k])^2]&\,\le\,c_1h(n_i)^2,
	\\
	\mathbb{E}[\lambda(\calS(n_{k+1},n_{i}]\cap\calS[0,n_{k+1}])^2]&\,\le\,c_1h(n_i)^2.\end{align*} 
	This implies $$\sum_{j=1}^{i-1} \sum_{k=0}^{j-1}\mathbb{E}[\mathcal{J}_{n_k}\mathcal{J}^{(2)}_{j,k}]\,\le\,c_1 i h(n_i)^2.$$ Analogously we can show that $$\sum_{j=1}^{i-1} \sum_{k=0}^{j-1}\mathbb{E}[\mathcal{J}_{n_k}]\,\mathbb{E}[\mathcal{J}^{(2)}_{j,k}]\,\le\,c_1 i h(n_i)^2.$$
	Thus, $$\Var\left(\sum_{j=0}^{i-1}\mathcal{J}_{n_j} \right)\,\le\,c_2 i h(n_i)^2$$ for some $c_2>0$.
	
	Let $2^k\le n_i< 2^{k+1}$, and $\Lambda=2-d/\alpha$ when $d/\alpha\in(1,2)$. Then
	\begin{equation*}i h(n_i)^2\,\le\, h(2^{k+1})^2\sum_{j=1}^kj2^{j/2}\,=\,\begin{cases}
	\mathcal{O}(n_i^{1/2}\log n_i), & d/\alpha> 2, \\
	\mathcal{O}(n_i^{1/2}(\log n_i)^3), & d/\alpha = 2,\\
	\mathcal{O}(n_i^{2\Lambda+1/2}\log n_i), & d/\alpha \in(1,2),
	\end{cases}
	\end{equation*}
	and, for any $\varepsilon>0$,
	\begin{align*}\Prob\left(\left|\sum_{j=0}^{i-1}\langle\mathcal{J}_{n_j}\rangle\right|\ge\varepsilon\sqrt{n_i/\log\log n_i}\right)&\,=\, \begin{cases}
	\mathcal{O}(n_i^{-1/2}(\log n_i) \log\log n_i), & d/\alpha> 2, \\
	\mathcal{O}(n_i^{-1/2}(\log n_i)^3\log\log n_i), & d/\alpha = 2,\\
	\mathcal{O}(n_i^{2\Lambda-1/2}(\log n_i)\log\log n_i), & d/\alpha \in(1,2)
	\end{cases}\\
	&\,=\, \begin{cases}
	\mathcal{O}(2^{-k/2}k\log k), & d/\alpha> 2, \\
	\mathcal{O}(2^{-k/2}k^3\log k), & d/\alpha = 2,\\
	\mathcal{O}(2^{(2\Lambda-1/2)k}k\log k), & d/\alpha \in(1,2).
	\end{cases}
	\end{align*}
	Let $\epsilon\in(0,1)$ be arbitrary and let us consider a subsequence $\{n_{i_l}\}_{l\ge1}$ which consists of every $\lfloor2^{(1-\epsilon)k/2}\rfloor $-th member of $\{n_i\}_{i\ge0}$ in $[2^k,2^{k+1})$. Clearly, there are at most $k2^{\epsilon k/2}$ members of this subsequence in $[2^k, 2^{k+1}]$.
	We  have 
	\begin{align*}&\sum_{l=1}^\infty\Prob\left(\left|\sum_{j=0}^{i_l-1}\langle\mathcal{J}_{n_j}\rangle\right|\ge\varepsilon\sqrt{n_{i_l}/\log\log n_{i_l}}\right)\\
	&\,\le\,  c_3\begin{cases}
	\sum_{k=1}^\infty2^{(\epsilon-1)k/2}k^2\log k, & d/\alpha> 2, \\
	\sum_{k=1}^\infty 2^{(\epsilon-1)k/2}k^4\log k, & d/\alpha = 2,\\
	\sum_{k=1}^\infty 2^{(4\Lambda+\epsilon-1)k/2}k^2\log k, & d/\alpha \in(1,2)
	\end{cases}
	\end{align*} for some $c_3>0$.
	When $d/\alpha\ge 2$ we take an arbitrary  $\epsilon\in(0,1)$, and when $d/\alpha\in (1,2)$  we take  $\epsilon\in(0,1)$ such that $\epsilon<1-4\Lambda$. Observe that in the former case it is necessary that $\Lambda<1/4$ (that is, $d/\alpha\in(7/4,2)$). In this case,
	$$\sum_{l=1}^\infty\Prob\left(\left|\sum_{j=0}^{i_l-1}\langle\mathcal{J}_{n_j}\rangle\right|\ge\varepsilon\sqrt{n_{i_l}/\log\log n_{i_l}}\right)
	\,<\,\infty.$$ Borel-Cantelli lemma implies that  
	\begin{equation}\label{eq:3}
	\lim_{l\nearrow\infty}\frac{|\sum_{j=0}^{i_l-1}\langle\mathcal{J}_{n_j}\rangle |}{\sqrt{n_{i_l}/\log\log n_{i_l}}}\,=\,0\qquad \Prob\text{-a.s.}
	\end{equation}
We finally prove that \cref{eq:3} holds for the sequence $\{n_i\}_{i\ge0}$. If $2^k\le n_{i_l}\le n_i\le n_{i_{l+1}}\le 2^{k+1}$ then 
	\begin{align*}
	\sum_{j=0}^{i_l-1}\langle\mathcal{J}_{n_j}\rangle-\sum_{j=i_l}^{i_{l+1}-1}\mathbb{E}[\mathcal{J}_{n_j}]\,\le\, \sum_{j=0}^{i-1}\langle\mathcal{J}_{n_j}\rangle\,\le\,\sum_{j=0}^{i_{l+1}-1}\langle\mathcal{J}_{n_j}\rangle+\sum_{j=i_l}^{i_{l+1}-1}\mathbb{E}[\mathcal{J}_{n_j}].
	\end{align*} From \Cref{Cor:moment_bounds} and \cref{eq:2} it follows that $$\mathbb{E}[\mathcal{J}_{n_{j}}]\,=\,\mathcal{O}(h(n_{j+1}-n_j))\,=\, \begin{cases}
	\mathcal{O}(1), & d/\alpha> 2, \\
	\mathcal{O}( \log (n_j^{1/2}/\log n_j)), & d/\alpha = 2,\\
	\mathcal{O}( (n_j^{1/2 }/\log n_j)^\Lambda), & d/\alpha \in(1,2).
	\end{cases}.$$ 
	Hence
	\begin{align*}\sum_{j=i_l}^{i_{l+1}-1}\mathbb{E}[\mathcal{J}_{n_j}]&\,=\,\begin{cases}
	\mathcal{O}(n_i^{(1-\epsilon)/2 }), & d/\alpha> 2, \\
	\mathcal{O}(  n_i^{(1-\epsilon)/2}\log n_i), & d/\alpha = 2,\\
	\mathcal{O}( n_i^{(\Lambda+1-\epsilon)/2 }), & d/\alpha \in(1,2).
	\end{cases}.\end{align*}
	By choosing  an arbitrary $\epsilon\in(0,1)$  in the case when $d/\alpha\ge 2$, and  $\epsilon\in(0,1)\cap(\Lambda,1-4\Lambda)$ in the case when $d/\alpha\in (1,2)$,
	we obtain
	$$\sum_{j=i_l}^{i_{l+1}-1}\mathbb{E}[\mathcal{J}_{n_j}]\,=\,\mathsf{o}(\sqrt{n_i/\log\log n_i}),$$ which concludes the proof. We observe that $\Lambda<1-4\Lambda$ if, and only if, $\Lambda<1/5$, that is, $d/\alpha\in(9/5,2)$.
\end{proof}

\subsection*{Acknowledgement}
This work has been supported by \textit{Deutscher Akademischer Austauschdienst} (DAAD) and \textit{Ministry of Science and Education of the Republic of Croatia} (MSE) via project \textit{Random Time-Change and Jump Processes}. 

Financial support through the \textit{Alexander von Humboldt Foundation} and \textit{Croatian Science Foundation} under projects 8958 and 4197 (for N.\ Sandri\'c), and the \textit{Austrian Science Fund} (FWF) under project P31889-N35 and \textit{Croatian Science Foundation} under project 4197 (for S.\ \v Sebek) is gratefully acknowledged.

\bibliographystyle{abbrv}
\bibliography{BIB}

\end{document}